\newcommand{\CCC}{\mathbb{C}}
\newcommand{\RR}{\mathbb{R}}
\newcommand{\LL}{\mathcal{L}}
\newcommand{\e}{\varepsilon}
\newcommand{\ee}{\mathrm{e}}
\newcommand{\p}{\rho}
\newcommand{\tp}{\tilde{\rho}}
\newcommand{\OO}{\Omega}
\newtheorem{theorem}{Theorem}
\newtheorem{lemma}{Lemma}
\newtheorem{remark}{Remark}%
\begin{document}

\title{Systematic design of compliant morphing structures: a phase-field approach}

\author[1]{\fnm{Jamal} \sur{Shabani}}\email{shabanij@mcmaster.ca}
\author[2]{\fnm{Kaushik} \sur{Bhattacharya}}\email{bhatta@caltech.edu}
\author*[1]{\fnm{Blaise} \sur{Bourdin}}\email{bourdin@mcmaster.ca}

\affil[1]{\orgdiv{Department of Mathematics \& Statistics}, \orgname{McMaster University}, \orgaddress{\city{Hamilton, ON}, \country{Canada}}}
\affil[2]{\orgname{California Institute of Technology}, \orgaddress{\city{Pasadena, CA}, \country{USA}}}

\abstract{We investigate the systematic design of compliant morphing  structures composed of materials reacting to an external stimulus.
We add a perimeter penalty term to ensure existence of solutions.
We propose a phase-field approximation of this sharp interface problem, prove its convergence as the regularization length approaches 0 and present an efficient numerical implementation.
We illustrate the strengths of our approach through a series of numerical examples.
}

\keywords{Topology optimization, Phase-field method, Optimal design, Responsive materials.}

\pacs[MSC Classification]{74P10, 74P15, 49N45}

\maketitle

\section{Introduction}\label{introduction}
Advances in additive manufacturing and synthesis of complex ``active'' materials whose properties can be altered through external stimuli are opening the door to a new generation of integrated devices and materials.
While manufacturing such structures or materials has received a considerable attention (see for instance~\cite{Schaedler-Jacobsen-EtAl-2011b,Xu-Zhang-EtAl-2019a}), their actual design remains challenging.
Starting from the pioneering work of~\cite{Sigmund-1997,Larsen-Sigmund-EtAl-1997,Jonsmann-Sigmund-EtAl-1999}, topology optimization has established itself as a powerful tool for systematic design of micro-devices, MEMS, or materials microstructures.
Here, the goal is to algorithmically find the distribution of materials in a ground domain that optimizes an objective function~\cite{BendsoeSigmund2003}.
It is well-known that such problems generally do not admit a ``classical'' solution (see~\cite{Allaire2002} for instance) resulting in optimal designs consisting of an infinitely fine mixture of multiple materials.
Homogenization approaches~\cite{Cherkaev2000,ABFJ1997,Allaire2002} tackle this problem directly by extending admissible designs to such mixtures.
This type of approach is mathematically well grounded and leads to well-posed problems that can be implemented efficiently.
However, it is often criticized for leading to designs that
cannot be manufactured.
Several other classes of techniques aim at restricting the class of admissible designs in such a way that avoids fine mixtures.
The combination of material interpolation (SIMP) and filters~\cite{Bendsoe-Sigmund-1999,Bourdin-2001} is a commonly employed approach. 
Shape parameterization by level set functions~\cite{Allaire_2004,Allaire_2013} also limits the complexity of designs.
Finally, by penalizing the length (or surface) of interfaces between materials, perimeter penalization~\cite{AmbrosioButtazzo1993,Haber-EtAl-1996,Petersson-1999b} also produces designs with limited complexity.
Additionally, perimeter penalization can be efficiently implemented using a phase-field approach~\cite{BourdinChambolle2003,BourdinChambolle2006,Tran-Bourdin-2022c}.

In this article, we propose a phase-field algorithm for the systematic design of active structures achieving prescribed deformations under some unknown distributions of a stimulus.
Our focus is on linear elastic materials in which an external stimulus can generate an isotropic inelastic strain, similar to linear thermo-elastic materials.

Section~\ref{problem-statement} is devoted to the mathematical analysis of the problem and its phase-field approximation.
A numerical scheme is proposed in Section~\ref{sec:numerical-implementation} and illustrated by a series of numerical simulations in Section~\ref{sec:numerical-results}.

\section{Problem statement}\label{problem-statement}
Consider linear elastic materials whose constitutive laws depend on an external real-valued stimulus $s \in [-1,1]$  inducing an inelastic strain, \emph{i.e.}
\begin{equation}
    \label{eq:responsiveConstitutiveLaw}
    \sigma = \mathbb{C} \left(\ee(u) - \beta s\mathrm{I}_d\right)
\end{equation}
where $\mathbb{C}$ denotes the Hooke's law, $\ee(u) = (\nabla u + \nabla u^T)/2$ is the linearized strain associated to a displacement field $u$, $\beta \ge 0$ is a given parameter and $\mathrm{I}_d$ the $d\times d$ identity matrix.
Throughout this article, we call such a material a \emph{responsive material} characterized by $\mathbb{C}$ and $\beta$.

Consider a bounded open domain $\OO \subset \mathbb{R}^d$, $d = 2,3$, and an open subset $\OO_0$ of $\OO$. 
A \emph{design} $\mathrm{D}$ is a partition of $\OO$ into $m$ subdomains $(D_1,\dots, D_m)$ occupied by $m$ responsive materials characterized by $(\mathbb{C}_1,\beta_1),\dots,(\mathbb{C}_m,\beta_m)$.
Let $\Gamma_D \subset \partial \OO$ be a regular-enough part of the boundary of our domain with non-zero length and $\Gamma_N = \partial \OO \setminus \Gamma_D$.
Consider $n$ prescribed displacement fields $(\bar{u}_1,\dots , \bar{u}_n) \in \left[H^1(\OO_0; \mathbb{R}^d)\right]^n$ defined over $\OO_0$.

Our goal is to design a structure $\mathrm{D}$ and a family of stimulus functions $\mathrm{s}:=(s_1, \dots , s_n)$ such that in the equilibrium configuration associated with each stimulus $s_j$, $\OO_0$ is mapped to a region as close as possible to $\bar{u}_j(\OO_0)$, $j = 1,\dots,n$.
More precisely, let $(\theta_1,\dots,\theta_m)$ such that $\sum_{i=1}^m \theta_i = 1$ be a set of prescribed volume fractions in $[0,1].$
The space of admissible designs consists of partition of $\OO$ in subsets with prescribed volume fraction, \emph{i.e.}
\begin{multline}
    \label{eq:defD}
    \mathcal{D} := \Bigl\{(D_1,\dots,D_m); \bigcup_{1\le j \le m} \bar{D}_i = \OO,\ 
    D_i \cap D_j = \emptyset\ 1\le i < j \le m, \\
    |D_i| = \theta_i|\OO|,\ i = 1,\dots,m\Bigr\}.
\end{multline}
We consider the space of admissible stimuli 
$\mathrm{s}$ taking values in $[-1,1]^n$, \emph{i.e.}
\begin{equation}
    \label{eq:defS}
    \mathcal{S}:= L^1\left(\OO,[-1,1]^n\right).
\end{equation}
Given a design $\mathrm{D}$ and a set of stimuli $\mathrm{s}$, we define
\begin{equation}
    \label{eq:objectiveFunctionNoPerimeter}
    \mathcal{I}(u_1,\dots,u_n) := \sum_{j=1}^n\frac{1}{2}\int_{\OO_0} |u_j(x) - \bar{u}_j(x)|^2\, dx,
\end{equation}
where the $u_j \in V$, $1 \le j \le n$,  satisfy the weak form of the linearized elasticity system 
\begin{equation}
    \label{eq:stateSharpWeak}
    \int_{\OO} \sum_{i=1}^m \chi_{D_i}\mathbb{C}_i\left(\ee(u_j) - \beta_i s_j\mathrm{I}_d\right)\cdot \ee(\phi)\, dx = 0,\forall \phi \in V,
\end{equation}
where 
\begin{equation}\label{eq:sobolevSpace}
    V:= \left\{ \phi \in H^1(\Omega);\, \phi = 0 \text{ on } \Gamma_D \right\}.
\end{equation}

%

Let 
\begin{equation}
    \label{eq:defP}
    \mathcal{P}(\mathrm{D}) := \frac12 \sum_{i, j=1}^m \mathcal{H}^{d-1}\left(\partial^* D_i \cap \partial^* D_j \cap \Omega\right),
\end{equation}
where $\mathcal{H}^{d-1}$ denotes the $d-1$--dimensional Hausdorff measure and $\partial^*D$ the essential boundary of a set $D$ (a generalization of the classical concept of the boundary of a set that does not require regularity, see~\cite{Ambrosio-EtAl-2000} Definition 3.60).  
Given a small regularization parameter $\alpha > 0$, we study the problem
\begin{equation}
    \label{eq:topOptStrong}
    \inf_{(\mathrm{D},\mathrm{s}) \in \mathcal{D} \times \mathcal{S}} \mathcal{I}(u_1,\dots,u_n) + \alpha\mathcal{P}(\mathrm{D}).
\end{equation}

\subsection{Phase-field regularization}\label{phase-field-approach}
The phase-field approach to optimal design, introduced in~\cite{BourdinChambolle2003,BourdinChambolle2006} (see also~\cite{Wang-Zhou-2004a,Zhou-Wang-2007,Tran-Bourdin-2022c,Garcke-Huttl-EtAl-2021a,Garcke-Huttl-EtAl-2024a}) is based on the idea of variational approximation of the perimeter penalty $\mathcal{P}$ for generalized designs.
We define a set of \emph{generalized designs}, that is vector-valued functions $\rho\in \mathcal{D}_\rho$, where   
\begin{multline}
    \label{eq:defDrho}
    \mathcal{D}_\rho:=\Bigl\{(\p_1,\p_2,\ldots,\p_m) \in \left[H^1(\OO;[0,1])\right]^m,\\
    \sum_{i=1}^m\p_i=1,\int_\OO \rho_i\, dx = \theta_i|\OO|,\ 1 \le i \le m\Bigr\}.
\end{multline}
Loosely speaking, the components $\rho_i$ of the vector-valued phase-field $\rho$ can be thought of as a density of material $i$ at each point of the domain $\OO$, and classical designs would correspond to the situations where $\rho_i = \chi_{D_i}$.
Indeed, if $\mathrm{D} \in \mathcal{D}$ is a classical design, then $\left( \chi_{D_1},\dots,\chi_{D_m}\right)$ is a generalized design.
We then extend $\mathcal{P}$ to generalized designs by defining
\begin{equation}
    \label{eq:defPe}
    \mathcal{P}_\e(\p) := \int_\Omega \frac{W(\p)}{\e} + \e |D\p|^2\, dx,
\end{equation}
where $\e>0$ is a regularization parameter and $W$ is a non-negative function vanishing only at the vertices $p_1,\dots, p_m$ of the $m$-dimensional unit simplex and satisfying
\begin{multline}
    \label{eq:normalizationW}
    d_{ij}:= \inf \Biggl\{\int_0^1 W^{1/2}(\gamma(t))|\gamma'(t)|\,dt; \gamma \in C^{1}((0,1);\RR^m),\gamma(0)=\p_i,\gamma(1)=\p_j \Biggr\} = 1
\end{multline}
for all $1\le i < j \le m$.
Relation~\eqref{eq:normalizationW} is designed to recover the proper weights in the perimeter functional in the sharp-interface limit $\e \to 0$. It is typically recovered from the construction of the ``optimal profile'' problem (see~\cite{Alberti-2000}, for instance).

Next, we introduce a convex continuous function $a$ such that $a(0) = 0$ and $a(1) = 1$, so that the equilibrium equation~\eqref{eq:stateSharpWeak} can then be extended to generalized designs by 
\begin{equation}
    \label{eq:statePFWeak}
    \sum_{i=1}^m\int_\OO a(\p_i)\mathbb{C}_i\left(\ee(u_j)-\beta_i s_j\mathrm{I}_d\right)\cdot \ee(\phi)\, dx = 0\ \forall \phi \in V,\  1 \le j \le n.
\end{equation} 
For a given $\e>0$, the phase field  regularization of~\eqref{eq:topOptStrong} is then
\begin{equation}
    \label{eq:topOptPF}
    \inf_{(\rho,\mathrm{s}) \in \mathcal{D_\rho} \times \mathcal{S}} \mathcal{I}(u_1,\dots,u_n) + \alpha\mathcal{P}_\e(\rho).
\end{equation}

We show in the next Section that both problems admit solutions and that the solutions of~\eqref{eq:topOptStrong} converge in some sense to that of~\eqref{eq:topOptPF}.
This result justifies numerical approach presented below and which consists in minimizing~\eqref{eq:topOptPF} for ``small'' values of the regularization parameter $\e$.

\subsection{Existence of solutions}\label{existence-of-solutions}
The existence of minimizers for the regularized problem~\eqref{eq:topOptPF} and their convergence to that of the perimeter-controlled topology optimization problem~\eqref{eq:topOptStrong} is a relatively straightforward consequence of the now-classical $\Gamma$--convergence result for phase transition problems~\cite{Modica-Mortola-1977b,Modica1987a,Baldo-1990a,Alberti-2000}.

Prior to stating our main result, we need to introduce a few notations.
Let
\begin{equation}
    \label{eq:defPeT}
    \widetilde{\mathcal{P}}_\e(\p,\mathrm{s}) := \begin{cases}
        \mathcal{P}_\e(\p) & \text{ if } (\p,\mathrm{s}) \in \mathcal{D}_\p\times\mathcal{S}\\
        +\infty & \text{ otherwise,}
    \end{cases}
\end{equation}
and 
\begin{equation}
    \label{eq:defPT}
    \widetilde{\mathcal{P}}(\p,\mathrm{s}) := \begin{cases}
        \mathcal{P}(\p) & \text{ if } (\p,\mathrm{s}) \in \widetilde{\mathcal{D}}\times\mathcal{S}\\
        +\infty & \text{ otherwise,}
    \end{cases}
\end{equation}
where
\begin{equation}
    \label{eq:defDT}
    \widetilde{\mathcal{D}} = \left\{\p;  \exists (D_1,\dots,D_m)\in \mathcal{D}, \rho_i = \chi_{D_i},\ 1\le i \le m\right\},
\end{equation}
and \begin{equation}
    \label{eq:defIT}
    \widetilde{\mathcal{I}}(\rho,\mathrm{s}) = \mathcal{I}\left(u_1(\rho,s_1), \dots, u_n(\rho,s_n)\right),
\end{equation}
where $u_j(\rho)$, $1 \le j \le n$ satisfy~\eqref{eq:statePFWeak}.
We are now able to state the main existence and approximation result:
\begin{theorem}
    \label{thm:main}
    Let $\mathbb{C}_1,\dots,\mathbb{C}_m$ be symmetric definite linear operators over $\mathrm{M}^{d\times d}_\mathrm{sym}$ and assume that $\beta_i \ge 0$ for any $1 \le j \le m$.

    For any given $\e>0$, the problem
    \begin{equation}
        \label{eq:topOptPFT}
        \inf_{(\rho,\mathrm{s}) \in \mathcal{D_\rho} \times \mathcal{S}} \widetilde{\mathcal{I}}(\p,\mathrm{s}) + \alpha\widetilde{\mathcal{P}}_\e(\rho,\mathrm{s}),
    \end{equation}
    admits a solution $(\p_\e,\mathrm{s}_\e)$.
    Furthermore, there exists $(\p,\mathrm{s}) \in \widetilde{\mathcal{D}} \times \mathcal{S}$ such that a subsequence $\p_{\e'} \to \p$ in $\left[L^1(\OO)\right]^m$ and $\mathrm{s}_{\e'} \rightharpoonup \mathrm{s}$ in $\left[L^p(\Omega)\right]^n$ for any $1<p\le \infty$ and $(\p,\mathrm{s})$ is a solution of
    \begin{equation}
        \label{eq:topOptStrongT}
        \inf_{(\p,\mathrm{s}) \in \mathcal{D}_\rho \times \mathcal{S}} \widetilde{\mathcal{I}}(\p,\mathrm{s}) + \alpha\widetilde{\mathcal{P}}(\p,\mathrm{s}).
    \end{equation}
\end{theorem}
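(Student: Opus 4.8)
The plan is to combine three ingredients: (i) the direct method in the calculus of variations for fixed $\e$, (ii) the classical Modica--Mortola/Baldo $\Gamma$-convergence of $\mathcal{P}_\e$ to (a constant multiple of) $\mathcal{P}$ on $[L^1(\OO)]^m$, and (iii) continuity/lower semicontinuity of the compliance-type functional $\widetilde{\mathcal{I}}$ along the relevant convergences. First I would establish the key \emph{a priori} and continuity properties of the state map. For fixed $\rho \in \mathcal{D}_\rho$ and $s_j \in \mathcal{S}$, since each $\mathbb{C}_i$ is symmetric positive definite and $a$ is convex with $a(0)=0$, $a(1)=1$, the bilinear form in~\eqref{eq:statePFWeak} is coercive on $V$ (using Korn's inequality and the fact that $\sum_i \rho_i = 1$ forces $\sum_i a(\rho_i) \ge c > 0$ pointwise — this needs a short argument: convexity of $a$ and $a(0)=0$ give $a(t)\ge t\,a(1)=t$ on $[0,1]$ by... actually $a$ convex with $a(0)=0$ gives $a(t)\le t$, so instead I use that $\sum a(\rho_i)\ge a(\max_i \rho_i)\ge a(1/m)>0$), so $u_j$ exists, is unique, and satisfies $\|u_j\|_{H^1} \le C(1 + \|s_j\|_{L^2})$ with $C$ independent of $\rho$. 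Testing with $\phi = u_j$ and using the inelastic-strain term, one gets the bound with the stimulus appearing linearly.

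Next, for existence at fixed $\e$: take a minimizing sequence $(\rho^k, \mathrm{s}^k)$. The perimeter term $\mathcal{P}_\e(\rho^k)$ is bounded, which (for fixed $\e>0$) bounds $\|D\rho^k\|_{L^2}$, hence $\rho^k$ is bounded in $[H^1(\OO)]^m$; extract a weakly convergent subsequence with $\rho^k \to \rho_\e$ strongly in $[L^2]^m$ and a.e. Since $\mathrm{s}^k$ is bounded in $[L^\infty]^n$, extract $\mathrm{s}^k \rightharpoonup^* \mathrm{s}_\e$ in $[L^\infty]^n$; the constraint set $\mathcal{S}$ is closed under weak-$*$ convergence and $\mathcal{D}_\rho$ is weakly closed in $[H^1]^m$ (the volume and sum constraints pass to the limit). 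The state equation passes to the limit: $a(\rho^k_i)\to a(\rho_{\e,i})$ strongly in every $L^q$ by dominated convergence (bounded by $1$), $\ee(u_j^k)$ is bounded in $L^2$ hence weakly convergent, and $s_j^k \rightharpoonup s_{\e,j}$; the product $a(\rho^k_i) s_j^k \to a(\rho_{\e,i}) s_{\e,j}$ weakly in $L^2$ (strong times weak). One then identifies $u_j^k \rightharpoonup u_j(\rho_\e, s_{\e,j})$ and, by compact embedding $H^1(\OO_0)\hookrightarrow L^2(\OO_0)$, $\widetilde{\mathcal{I}}(\rho^k,\mathrm{s}^k)\to \widetilde{\mathcal{I}}(\rho_\e,\mathrm{s}_\e)$; lower semicontinuity of $\mathcal{P}_\e$ (convex in $D\rho$, continuous in $\rho$) finishes the existence claim for~\eqref{eq:topOptPFT}.

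For the convergence statement, let $(\rho_\e,\mathrm{s}_\e)$ be minimizers and let $E_\e := \widetilde{\mathcal{I}}(\rho_\e,\mathrm{s}_\e) + \alpha \mathcal{P}_\e(\rho_\e)$. Using any fixed classical competitor $\rho^0 \in \widetilde{\mathcal{D}}$ of finite perimeter together with a recovery sequence (from the $\Gamma$-$\limsup$ inequality of Modica--Mortola/Baldo) shows $\limsup_{\e\to 0} E_\e \le \widetilde{\mathcal{I}}(\rho^0,\mathrm{s}^0) + \alpha\mathcal{P}(\rho^0) < \infty$; in particular $\mathcal{P}_\e(\rho_\e)$ is bounded. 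The compactness part of the Modica--Mortola theorem then yields a subsequence $\rho_{\e'} \to \rho$ in $[L^1(\OO)]^m$ with $\rho$ of the form $(\chi_{D_1},\dots,\chi_{D_m})$ for a Caccioppoli partition, i.e. $\rho \in \widetilde{\mathcal{D}}$ (the volume constraints pass to the $L^1$ limit), and $\liminf_{\e'} \mathcal{P}_{\e'}(\rho_{\e'}) \ge \mathcal{P}(\rho)$ by the $\Gamma$-$\liminf$ inequality. Meanwhile $\mathrm{s}_{\e'}$ is bounded in $[L^\infty]^n$, so along a further subsequence $\mathrm{s}_{\e'} \rightharpoonup \mathrm{s}$ in $[L^p]^n$ for every $1<p\le\infty$ (weak-$*$ for $p=\infty$), with $\mathrm{s}\in\mathcal{S}$. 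Passing to the limit in the state equation as in the previous paragraph (here $a(\rho_{\e',i}) \to \chi_{D_i}$ in $L^q$ for all $q<\infty$ by dominated convergence, since $\rho_{\e',i}\to\chi_{D_i}$ a.e. up to a subsequence) identifies the limit displacements as the solutions of~\eqref{eq:statePFWeak} for $(\rho,\mathrm{s})$, and the compact embedding again gives $\widetilde{\mathcal{I}}(\rho_{\e'},\mathrm{s}_{\e'})\to\widetilde{\mathcal{I}}(\rho,\mathrm{s})$. Combining, $\widetilde{\mathcal{I}}(\rho,\mathrm{s}) + \alpha\widetilde{\mathcal{P}}(\rho,\mathrm{s}) \le \liminf E_{\e'} \le \limsup E_\e \le \widetilde{\mathcal{I}}(\rho^0,\mathrm{s}^0)+\alpha\mathcal{P}(\rho^0)$ for every admissible classical competitor, which is exactly optimality of $(\rho,\mathrm{s})$ for~\eqref{eq:topOptStrongT}.

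The main obstacle I anticipate is the interaction between the vanishing diffuse-interface term and the state equation: one must ensure that the weak-$*$ convergence of the stimuli $\mathrm{s}_\e$ is genuinely \emph{compatible} with the strong $L^1$ (only) convergence of $\rho_\e$ when passing to the limit in the bilinear form — the product $a(\rho_{\e,i})\,s_{\e,j}$ is (strong in $L^q$, all $q<\infty$) times (weak-$*$ in $L^\infty$), which does converge weakly, but one has to be careful that the test functions $\phi\in V$ are fixed so that $\ee(\phi)\in L^2$ pairs correctly, and that the self-term $a(\rho_{\e,i})\,\ee(u_{\e,j})\cdot\ee(\phi)$ converges — here $\ee(u_{\e,j})\rightharpoonup\ee(u_j)$ only weakly in $L^2$ while $a(\rho_{\e,i})\to\chi_{D_i}$ strongly in $L^q$, $q<\infty$, and $a(\rho_{\e,i})\,\ee(\phi)\to\chi_{D_i}\ee(\phi)$ strongly in $L^2$ by dominated convergence, so the pairing passes to the limit. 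A secondary technical point is the uniform coercivity constant for~\eqref{eq:statePFWeak}, needed to get $\e$-independent bounds on $u_{\e,j}$; this follows from $\sum_i a(\rho_i)\ge a(1/m)>0$ pointwise together with Korn's inequality on $V$ (recall $\Gamma_D$ has positive measure), and should be stated as a preliminary lemma.
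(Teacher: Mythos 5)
Your proposal is correct and follows essentially the same route as the paper: an equi-coercivity bound for the state equation, continuity of the state map under strong $L^1$ convergence of $\rho$ and weak convergence of $\mathrm{s}$, and Baldo's compactness and $\Gamma$-convergence result for $\mathcal{P}_\e$, with the only cosmetic difference that you unpack the fundamental theorem of $\Gamma$-convergence into explicit $\liminf$/$\limsup$/recovery-sequence steps rather than citing it. One remark: your self-correction on the coercivity constant is in fact a fix of the paper's own Lemma~\ref{lem:equicoercivity}, which asserts $\sum_i a(\rho_i) \ge a\bigl(\sum_i \rho_i\bigr) = 1$; for convex $a$ with $a(0)=0$ superadditivity gives the \emph{reverse} inequality (e.g.\ $a(s)=s^2$, $\rho_i = 1/m$ yields $\sum_i a(\rho_i)=1/m<1$), so a pointwise lower bound such as your $\sum_i a(\rho_i)\ge a(\max_i\rho_i)\ge a(1/m)$ (valid for the nonnegative, nondecreasing interpolants used here, with $a(1/m)>0$) is indeed what is needed.
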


Before proving Theorem~\ref{thm:main}, we state and prove several preliminary lemmas.

\begin{lemma}[Equi-coercivity of the displacements]
    \label{lem:equicoercivity}
    Let $(\rho,\mathrm{s}) \in \mathcal{D}\times \mathcal{S}$ and $k_1,k_2,k_3 >0$ be such that for any $1 \le i \le m$ and for any $\Psi \in \mathrm{M}^{d\times d}_\mathrm{sym}$
    \begin{equation}
        \label{eq:boundsAi}
        k_1 \Psi\cdot \Psi \le \CCC_i \Psi\cdot \Psi \le k_2 \Psi\cdot \Psi,
    \end{equation}
    and for any $1\le i  \le m$ ,
    \begin{equation}
        \label{eq:boundsBetai}
        |\beta_i| \le k_3.         
    \end{equation}
    
    There exists $C>0$ such that if $(u_1,\dots u_n) \in V^n$ satisfies~\eqref{eq:statePFWeak}, then
    \[
        \|u_j\|_{H^1(\Omega)} \le C ,\  1 \le j \le n.
    \] 
\end{lemma}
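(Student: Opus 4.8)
The plan is the classical energy estimate followed by Korn's inequality, the only extra ingredient being a uniform-in-design coercivity of the interpolated elasticity tensor. First, for each $1\le j\le n$ I would test the weak equilibrium relation~\eqref{eq:statePFWeak} with $\phi=u_j\in V$ and rearrange to
\[
    \int_\OO\Bigl(\sum_{i=1}^m a(\p_i)\,\CCC_i\Bigr)\ee(u_j)\cdot\ee(u_j)\,dx
    \;=\; \sum_{i=1}^m\int_\OO a(\p_i)\,\beta_i s_j\,\CCC_i\mathrm{I}_d\cdot\ee(u_j)\,dx .
\]

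Next I would bound the left-hand side from below. By~\eqref{eq:boundsAi} one has $\sum_i a(\p_i)\CCC_i\Psi\cdot\Psi\ge k_1\bigl(\sum_i a(\p_i)\bigr)|\Psi|^2$, so the crux is a lower bound $\sum_{i=1}^m a(\p_i)\ge a_0>0$ valid a.e.\ and uniformly over admissible designs. This is where the simplex constraint enters: at every point $\max_i\p_i\ge 1/m$, so $\sum_i a(\p_i)\ge a\bigl(\max_i\p_i\bigr)\ge \min_{[1/m,1]}a=:a_0$, which is positive provided $a$ is non-negative and non-degenerate on $[1/m,1]$ --- a property implicit in the well-posedness of~\eqref{eq:statePFWeak}, and trivially satisfied ($a_0=1$) for a classical design $\p_i=\chi_{D_i}$, for which $\sum_i a(\p_i)=\sum_i\chi_{D_i}=1$. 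Hence the left-hand side dominates $k_1 a_0\,\|\ee(u_j)\|_{L^2(\OO)}^2$. For the right-hand side I would use $0\le a(\p_i)\le 1$ (the chord estimate for the convex $a$ normalized by $a(0)=0,a(1)=1$), $|s_j|\le 1$, $|\beta_i|\le k_3$ from~\eqref{eq:boundsBetai}, and $|\CCC_i\mathrm{I}_d|\le k_2\sqrt d$ (the operator norm of $\CCC_i$ being at most $k_2$ by~\eqref{eq:boundsAi}); summing over $i$ and applying Cauchy--Schwarz in $L^2(\OO)$ bounds it by $C(k_2,k_3,d)\,|\OO|^{1/2}\,\|\ee(u_j)\|_{L^2(\OO)}$. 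Combining the two sides and cancelling one factor of $\|\ee(u_j)\|_{L^2(\OO)}$ gives $\|\ee(u_j)\|_{L^2(\OO)}\le C_1$ with $C_1=C_1(k_1,k_2,k_3,d,|\OO|)$ --- in particular independent of $(\p,\mathrm{s})$ and of the chosen solution.

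Finally, since $u_j\in V$ vanishes on $\Gamma_D\subset\partial\OO$, which has positive $\HH^{d-1}$-measure, and $\OO$ is bounded and regular enough, Korn's inequality gives $\|u_j\|_{H^1(\OO)}\le C_K\|\ee(u_j)\|_{L^2(\OO)}$ with $C_K=C_K(\OO,\Gamma_D)$; chaining the two estimates proves the lemma with $C=C_K C_1$, uniform in $(\p,\mathrm{s})$. The genuinely delicate step is the uniform lower bound $\sum_i a(\p_i)\ge a_0>0$: this is exactly what upgrades the usual design-dependent a priori estimate into the equi-coercivity asserted here, and hence why $C$ may be chosen independent of the (generalized) design; the test-function computation, the elementary norm bounds, and Korn's inequality are otherwise routine.
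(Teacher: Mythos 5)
Your proposal is correct and follows the same route as the paper: test \eqref{eq:statePFWeak} with $u_j$, bound the quadratic form below and the source term above, cancel one factor of $\|\ee(u_j)\|_{L^2(\OO)}$, and finish with Korn's inequality with boundary conditions. The one place where the two arguments genuinely diverge is the step you single out as delicate, the pointwise lower bound on $\sum_{i=1}^m a(\p_i)$, and there your version is the sound one. The paper asserts that convexity of $a$ with $\sum_i\p_i=1$ gives $\sum_i a(\p_i)\ge a\bigl(\sum_i\p_i\bigr)=1$, but for a convex $a$ with $a(0)=0$ the inequality runs the other way ($a$ is superadditive on $[0,1]$, so $\sum_i a(\p_i)\le a(1)=1$; e.g.\ $a(s)=s^2$ and $\p_i=1/m$ give $\sum_i a(\p_i)=1/m$). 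Your replacement, $\max_i\p_i\ge 1/m$ hence $\sum_i a(\p_i)\ge\min_{[1/m,1]}a=:a_0>0$, is what actually closes the argument, at the cost of the extra (and genuinely needed) hypothesis that $a$ be non-negative and bounded away from zero on $[1/m,1]$ --- automatic for the quadratic interpolation $a(s)=s^2$ used in the paper, where $\sum_i\p_i^2\ge 1/m$ by Cauchy--Schwarz, but not a consequence of convexity and the endpoint values alone. The resulting constant is $k_1a_0$ rather than $k_1$, which changes nothing downstream since only uniformity in $(\p,\mathrm{s})$ matters.
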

\begin{proof}
    Using $u_j$ as the test function in~\eqref{eq:statePFWeak}, we obtain 
    \begin{equation*}
        \int_\Omega \sum_{i=1}^m a(\p_i)\mathbb{C}_i\ee(u)\cdot\ee(u)\, dx = 
        \int_\Omega \sum_{i=1}^m a(\p_i)\beta_i s \CCC_i \ee(u) \cdot \mathrm{I}_d\, dx.
    \end{equation*}
    Using then~\eqref{eq:boundsAi},  and Cauchy-Schwarz inequality, we get 
    \[
        k_1\sum_{i=1}^m a(\rho_i) \|\ee(u)\|^2_{L^2(\OO)} \le \|\ee(u)\|_{L^2(\OO)}\left\|\sum_{i=1}^m a(\rho_i)\beta_i \CCC_i \mathrm{I}_d\right\|_{L^2(\OO)}.
    \]
    Since $a$ is convex on $[0,1]$, $0 \le \rho_i \le 1$, and $\sum_i \rho_i = 1$ we have
    \[
        \sum_{i=1}^m a(\rho_i) \ge a\left(\sum_{i=1}^m \rho_i\right) = a(1) = 1.
    \] 
    Furthermore, since $a(\rho_i) <1$ for all $i$, we conclude that 
    \[
        k_1 \|\ee(u)\|^2_{L^2(\OO)} \le k_3\|\ee(u)\|_{L^2(\OO)}\left\|\sum_{i=1}^m \mathbb{C}_i\mathrm{I}_d\right\|_{L^2(\OO)},
    \]
    hence that 
    \[
        k_1 \|\ee(u)\|_{L^2(\OO)} \le C,
    \]
    for some $C>0$. We then conclude using Korn's inequality with boundary conditions~\cite[Theorem 6.15-4]{Ciarlet-2013a}.
\end{proof}


\begin{lemma}[Continuity of displacements]
    \label{lem:continuity}
    Let $\CCC_i$, $\beta_i$ be as in Lemma~\ref{lem:equicoercivity}.
    Consider a sequence $(\p_\e,\mathrm{s}_\e)_\e \in \mathcal{D}_\p \times S$ of designs and stimuli and $(\p,\mathrm{s}) \in \widetilde{\mathcal{D}}\times\mathcal{S}$ be such that $\p_\e \to \p,$ in $\left[L^1(\Omega)\right]^m$ and $\mathrm{s}_\e \rightharpoonup \mathrm{s}$ in  $\left[L^2(\Omega)\right]^n$. Let $u_\e = (u_{1,\e},\dots,u_{n,\e})$ (resp. $u = (u_1,\dots,u_n)$) be the equilibrium displacements associated with $(\p_\e,\mathrm{s}_\e)$ (resp. $(\p,\mathrm{s})$), given by~\eqref{eq:statePFWeak} (resp.~\eqref{eq:stateSharpWeak}).
    Then if the $\CCC_i$ and $\beta_i$ satisfy the hypotheses of Lemma~\ref{lem:equicoercivity}, $u_\e \to u$ in $\left[L^2(\OO)\right]^n$.
\end{lemma}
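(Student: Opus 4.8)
The plan is to combine the uniform $H^1$ bound of Lemma~\ref{lem:equicoercivity} with weak compactness, pass to the limit in the weak formulation \eqref{eq:statePFWeak}, identify the limit with the unique solution of the sharp-interface problem \eqref{eq:stateSharpWeak}, and conclude via a standard subsequence argument. Since the $n$ state equations in \eqref{eq:statePFWeak} are uncoupled, it suffices to treat a fixed index $j$; I drop it from the notation below, writing $u_\e := u_{j,\e}$ and $s_\e := s_{j,\e}$.

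First I would note that the constants $k_1, k_2, k_3$ furnished by Lemma~\ref{lem:equicoercivity} depend only on the fixed data $\CCC_i, \beta_i$, so $\|u_\e\|_{H^1(\OO)} \le C$ uniformly in $\e$. Hence, along a subsequence (not relabelled), $u_\e \rightharpoonup \bar u$ in $H^1(\OO;\RR^d)$, and by the Rellich--Kondrachov theorem $u_\e \to \bar u$ strongly in $L^2(\OO;\RR^d)$; since $V$ is a closed, hence weakly closed, subspace of $H^1(\OO;\RR^d)$, we have $\bar u \in V$, and $\ee(u_\e) \rightharpoonup \ee(\bar u)$ weakly in $L^2(\OO;\mathrm{M}^{d\times d}_\mathrm{sym})$.

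The core of the argument is the passage to the limit in \eqref{eq:statePFWeak}. The key observation is that $a(\p_{i,\e}) \to a(\p_i)$ in $L^q(\OO)$ for every $q\in[1,\infty)$: since $\p_{i,\e}\to\p_i$ in $L^1(\OO)$ it converges a.e. along a further subsequence, $a$ is continuous, and $0\le\p_{i,\e}\le 1$ bounds $a(\p_{i,\e})$ by $M:=\max_{[0,1]}a$, so dominated convergence applies on the bounded domain $\OO$. Consequently, for any fixed $\phi\in V$ the functions $a(\p_{i,\e})\ee(\phi)$ and $a(\p_{i,\e})\beta_i\CCC_i\mathrm{I}_d\cdot\ee(\phi)$ converge strongly in $L^2(\OO)$. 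Pairing the first with the weak convergence of $\ee(u_\e)$, and the second with the weak convergence $s_\e\rightharpoonup s$ in $L^2(\OO)$, each term of \eqref{eq:statePFWeak} passes to the limit and gives
\[
    \sum_{i=1}^m \int_\OO a(\p_i)\CCC_i\bigl(\ee(\bar u) - \beta_i s\,\mathrm{I}_d\bigr)\cdot\ee(\phi)\,dx = 0 \qquad \forall\,\phi\in V .
\]
Because $\p\in\widetilde{\mathcal D}$ we have $\p_i=\chi_{D_i}$ and, since $a(0)=0$ and $a(1)=1$, $a(\p_i)=\chi_{D_i}$; thus $\bar u$ solves exactly \eqref{eq:stateSharpWeak} for $(\p,s)$.

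Finally, the bilinear form $(v,\phi)\mapsto\sum_i\int_\OO\chi_{D_i}\CCC_i\ee(v)\cdot\ee(\phi)\,dx$ is bounded and, by \eqref{eq:boundsAi} and Korn's inequality with Dirichlet conditions on $\Gamma_D$ (of non-zero length), coercive on $V$, while the right-hand side of \eqref{eq:stateSharpWeak} is a bounded linear functional on $V$ since $s\in L^\infty(\OO)\subset L^2(\OO)$; by Lax--Milgram, \eqref{eq:stateSharpWeak} has a unique solution, so $\bar u=u$. As the limit does not depend on the chosen subsequence, the whole sequence satisfies $u_\e\to u$ in $L^2(\OO;\RR^d)$, and repeating this for $j=1,\dots,n$ yields the claim. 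I expect the only genuinely delicate step to be the passage to the limit in the products --- specifically upgrading the $L^1$ convergence of $\p_{i,\e}$ to strong $L^q$ convergence of $a(\p_{i,\e})$ and correctly matching it against the two distinct weak convergences ($\ee(u_\e)$ and $s_\e$); the uniform bound, the compactness, and the uniqueness are routine.
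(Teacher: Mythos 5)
Your proposal is correct and follows essentially the same route as the paper: equi-coercivity gives a uniform $H^1$ bound, weak compactness plus Rellich yield a limit, and the weak form \eqref{eq:statePFWeak} passes to the limit by pairing the strong convergence of the coefficients $a(\p_{i,\e})$ with the weak convergences of $\ee(u_\e)$ and $s_\e$, after which uniqueness identifies the limit. The only (harmless) difference is technical: the paper tests against $C^0_c$ matrix fields and invokes density in $L^2$, whereas you absorb $a(\p_{i,\e})$ into a fixed test function $\ee(\phi)$ and use dominated convergence directly, and you make explicit the Lax--Milgram/Korn well-posedness and the final subsequence-independence step that the paper leaves implicit.
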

\begin{proof}
Note first that using Lemma~\ref{lem:equicoercivity}, we have that the sequence $\left(u_\e\right)_\e$ is uniformly bounded in $\left[H^1(\OO)\right]^n$ so that there exists $u^* \in V^n$ such that $u_\e \rightharpoonup u^*$ in $\left[H^1(\OO)\right]^n$.
We need to show that $u^*$ satisfies~\eqref{eq:stateSharpWeak} from which we will deduce that $u^*=u$. 

Given any $1 \le j \le n$, let $\psi \in C^0_c(\OO,\mathrm{M}^{d\times d}_{\mathrm{sym}})$ be a test function. 
Denoting $\rho_{i,\e}$,  $1\le i \le m$, the components of $\p_\e$, we have that 
\begin{multline}
    \label{eq:lemma2-1}
    \left|\int_\OO \sum_{i=1}^m\left[a(\p_{i,\e})\mathbb{C}_i\left(\ee(u_{j,\e})-\beta_i s_{j,\e}\mathrm{I}_d\right) - a(\p_i)\mathbb{C}_i\left(\ee(u^*)-\beta_i s\mathrm{I}_d\right)\right]\cdot \psi\, dx\right| \le \\
    \left|\int_\OO\sum_{i=1}^m\left[\left(a(\p_{i,\e})-a(\p_i)\right)\mathbb{C}_i\left(\ee(u_{j,\e})-\beta_i s_{j,\e}\mathrm{I}_d\right)\right]\cdot \psi\, dx\right| \\
    +  \left|\int_\OO \sum_{i=1}^m\left[a(\p_i)\mathbb{C}_i\left(\ee(u_{j,\e})-\ee(u^*) - \beta_i s_{j,\e}\mathrm{I}_d+\beta_i s\mathrm{I}_d\right)\right]\cdot \psi\, dx\right| .
\end{multline}
Since $|\beta_i| \le k_3$,  $u_{j,\e} \rightharpoonup u^*_j$ in $H^1(\OO)$, and $s_{j,\e} \rightharpoonup s_j$ in  $L^2(\Omega)$, the second term in the right-hand side of~\eqref{eq:lemma2-1} converges to 0.
We then write
\begin{multline*}
    \left|\int_\OO\sum_{i=1}^m \left[ \left(a(\p_{i,\e})-a(\p_i)\right)\mathbb{C}_i\left(\ee(u_{j,\e})-\beta_i s_{j,\e}\mathrm{I}_d\right)\right] \cdot \psi\, dx\right|  \\ 
    \shoveright{
    \le \sum_{i=1}^m\left\|\left(a(\p_{i,\e})-a(\p_i)\right)\psi\right\|_{L^2(\OO)}
    \left\|\CCC_i \left(\ee(u_{j,\e})-\beta_i s_{j,\e}\mathrm{I}_d\right) \right\|_{L^2(\OO)}}\\
    \le k_2 \sum_{i=1}^m \left\|\left(a(\p_{i,\e})-a(\p_i)\right)\psi\right\|_{L^2(\OO)}
    \left\|\ee(u_{j,\e})-\beta_i s_{j,\e}\mathrm{I}_d \right\|_{L^2(\OO)},
\end{multline*}
and since $\p_\e \to \p$ in $L^1$ and is uniformly bounded in $L^\infty(\OO)$, we get that $\left\|\left(a(\p_{i,\e})-a(\p_i)\right)\psi\right\|_{L^2(\OO)} \to 0$ for any $1 \le i \le m$.
Using then Lemma~\ref{lem:equicoercivity}, we get that $\ee(u_{j,\e})$ is uniformly bounded in $L^2(\OO)$.
Since $\mathrm{s}_\e$ is uniformly bounded in $L^\infty(\OO)$ hence in $L^2(\OO)$, the first term in the right-hand side of~\eqref{eq:lemma2-1} also converges to 0.
Finally, by density of $C^0_c$ in $L^2$ we get that for any $\phi \in V$, 
$\left|\int_\OO\left(a(\p_{i,\e})-a(\p_i)\right)\mathbb{C}_i\left(\ee(u_{j,\e})-\beta_i s_{j,\e}\mathrm{I}_d\right)\cdot \ee(\phi)\, dx\right|\to 0$ and leveraging~\eqref{eq:stateSharpWeak} that for any $\in V$ and $1 \le i \le m$:
\[
    \int_\OO \sum_{i=1}^m a(\p_i)\mathbb{C}_i\left(\ee(u_j^*)-\beta_i s_j\mathrm{I}_d\right)\cdot \ee(\phi)\, dx = 0,
\]
\emph{i.e.} that $u^* = u$ solves~\eqref{eq:stateSharpWeak}.
\end{proof}

\begin{lemma}[Compactness]
Let $(\p_\e,\mathrm{s}_\e) \in \mathcal{D}_\p \times\mathcal{S}$ and the associated equilibrium displacements $u_{j,\e}$, $1\le j \le n$  be such that $\mathcal{I}(u_{1,\e},\dots,u_{n,\e}) + \alpha\widetilde{\mathcal{P}}_\e(\rho_\e,\mathrm{s}_\e)$ is uniformly bounded.
Then there exists a subsequence $(\p_\e,\mathrm{s}_\e)_{\e'} \subset (\p_\e,\mathrm{s}_\e)_\e$ and $(\p,\mathrm{s}) \in \widetilde{\mathcal{D}}\times\mathcal{S}$ such that $\p_{\e'} \to \p$ in $\left[L^1(\OO)\right]^m$ and $\mathrm{s}_{\e'} \rightharpoonup \mathrm{s}$ in $\left[L^p(\Omega)\right]^n$ for any $1<p\le \infty$.
\end{lemma}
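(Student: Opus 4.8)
The plan is to decouple the two contributions in the bounded quantity, to obtain compactness of the phase field $\p_\e$ from the classical theory of vector-valued phase transitions, and to obtain compactness of the stimulus $\mathrm{s}_\e$ from weak-$\ast$ compactness in $L^\infty$. Since $\mathcal{I} \ge 0$ and $\widetilde{\mathcal{P}}_\e \ge 0$, the hypothesis provides a constant $C>0$, independent of $\e$, with $\widetilde{\mathcal{P}}_\e(\p_\e,\mathrm{s}_\e) \le C$, and because $(\p_\e,\mathrm{s}_\e) \in \mathcal{D}_\p \times \mathcal{S}$ this reads $\mathcal{P}_\e(\p_\e) \le C$. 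From~\eqref{eq:defPe} I would then record the two usual consequences: first $\int_\OO W(\p_\e)\,dx \le C\e$, so $W(\p_\e) \to 0$ in $L^1(\OO)$; and second, by the arithmetic--geometric mean inequality applied to $W(\p_\e)/\e$ and $\e|D\p_\e|^2$, $\int_\OO W^{1/2}(\p_\e)\,|D\p_\e|\,dx \le C/2$.

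For the phase field I would invoke the compactness half of the now-classical $\Gamma$-convergence of $\mathcal{P}_\e$ towards $\mathcal{P}$ in $[L^1(\OO)]^m$, whose hypotheses are exactly the simplex constraint $\sum_i \p_i = 1$, the fact that the non-negative $W$ vanishes only at the vertices $p_1,\dots,p_m$, and the normalization $d_{ij}=1$ of~\eqref{eq:normalizationW} (see~\cite{Baldo-1990a,Alberti-2000}). Concretely, one introduces the vector $\Phi(z) := \bigl(d_W(z,p_1),\dots,d_W(z,p_m)\bigr)$ of geodesic distances associated with the degenerate metric $z \mapsto W^{1/2}(z)\,\langle\cdot,\cdot\rangle$ on the unit simplex (so $d_W(p_i,p_j)=d_{ij}=1$); each component is $1$-Lipschitz for that metric, hence $|D(\Phi\circ\p_\e)| \le \sqrt{m}\,W^{1/2}(\p_\e)\,|D\p_\e|$, so that, together with $\|\p_\e\|_{L^\infty}\le 1$, the second bound above shows $\Phi\circ\p_\e$ is bounded in $BV(\OO)$. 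Combining the compactness of the embedding $BV(\OO)\hookrightarrow L^1(\OO)$ with the first bound above, which forces $\p_\e$ to concentrate in measure on $\{p_1,\dots,p_m\}$ where $\Phi$ distinguishes the vertices, is exactly Baldo's argument and yields a subsequence $\p_{\e'}\to\p$ in $[L^1(\OO)]^m$ and a.e.\ in $\OO$. By Fatou's lemma and the first bound, $\int_\OO W(\p)\,dx = 0$, so $\p(x)\in\{p_1,\dots,p_m\}$ for a.e.\ $x$, that is $\p_i = \chi_{D_i}$ for a measurable partition $(D_1,\dots,D_m)$ of $\OO$, the $D_i$ having finite perimeter by the $BV$ bound. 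Passing to the limit in $\sum_i\p_{i,\e'}=1$ and in $\int_\OO\p_{i,\e'}\,dx = \theta_i|\OO|$ shows $(D_1,\dots,D_m)\in\mathcal{D}$, hence $\p\in\widetilde{\mathcal{D}}$.

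For the stimulus, each $\mathrm{s}_\e$ takes values in $[-1,1]^n$, so $(\mathrm{s}_\e)_\e$ is bounded in $[L^\infty(\OO)]^n$, the dual of the separable space $[L^1(\OO)]^n$; the sequential Banach--Alaoglu theorem then yields a further subsequence with $\mathrm{s}_{\e'}\rightharpoonup\mathrm{s}$ weakly-$\ast$ in $[L^\infty(\OO)]^n$. Since $\OO$ is bounded, $L^{p'}(\OO)\subset L^1(\OO)$ for every $1\le p'<\infty$, so this is also weak convergence in $[L^p(\OO)]^n$ for every $1<p<\infty$ and weak-$\ast$ convergence for $p=\infty$. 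Testing the inequalities $-1 \le s_{j,\e'} \le 1$ against an arbitrary non-negative $g\in L^1(\OO)$ and passing to the limit gives $-1\le s_j \le 1$ a.e., so $\mathrm{s}\in\mathcal{S}$; extracting the stimulus subsequence after the phase-field one produces a single subsequence along which both convergences hold.

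I expect the phase-field step to be the main obstacle: recovering $L^1$-precompactness of $\p_\e$ from the control of $\int_\OO W^{1/2}(\p_\e)\,|D\p_\e|$ alone, a setting in which $|D\p_\e|$ need not be bounded in $L^1$ across the transition layers. This is precisely the compactness argument of Baldo and Alberti for the vectorial (mixture) Modica--Mortola functional, and the only genuine care needed is to run it under the simplex and volume-fraction constraints and to verify that the limit inherits membership in $\widetilde{\mathcal{D}}$; the energy decoupling and the treatment of the stimulus are routine.
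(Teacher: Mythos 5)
Your proposal is correct and follows essentially the same route as the paper: the paper simply cites the compactness statement of~\cite{Baldo-1990a} (Proposition 4.1) for the phase field and the uniform $L^\infty$ bound from the definition of $\mathcal{S}$ for the stimuli, whereas you unpack Baldo's argument (the $W^{1/2}|D\p_\e|$ bound, the geodesic-distance truncations, the $BV$ compactness, and the concentration on the wells) in full detail. The extra care you take in verifying that the limit lies in $\widetilde{\mathcal{D}}$ and that the volume-fraction constraints pass to the limit is sound and consistent with what the paper leaves implicit.
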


\begin{proof}
    The compactness of the designs $\p_\e$ derives directly from the compactness theorem for $\mathcal{P}_\e$~\cite[Proposition 4.1]{Baldo-1990a}, noting that the set $\mathcal{D}_\rho$ is a closed convex subset of $[H^1(\Omega)]^m$ hence closed under weak convergence whereas that of the stimuli $\mathrm{s}_\e$ derives from the uniform $L^\infty$ bound on stimuli in the definition of $\mathcal S$. 

\end{proof}

\begin{proof}[Proof of Theorem~\ref{thm:main}]
    Having proved the lemmata, the proof of Theorem~\ref{thm:main} is straightforward.
    Note first that from~\cite[Theorem 2.5]{Baldo-1990a}, we have that $\widetilde{\mathcal{P}}_\e \xrightharpoonup{\Gamma(L^1)} \widetilde{\mathcal{P}}$.
    From Lemma~\ref{lem:continuity}, we get that $\widetilde{\mathcal{I}}$ is a continuous function of $(\rho,\mathrm{s})$, so that by stability of $\Gamma$--convergence by continuous perturbations, we get that for any $\alpha >0$ $\widetilde{\mathcal{I}} + \alpha \widetilde{\mathcal{P}}_\e$ $\Gamma$--converges to $\widetilde{\mathcal{I}} + \alpha \widetilde{\mathcal{P}}$ for the $\left[L^1(\OO)\right]^m$ strong times $\left[L^p(\Omega)\right]^n$ weak topology for any $1<p\le \infty$.
    Secondly, from the compactness and continuity lemmas~\ref{lem:equicoercivity} and~\ref{lem:continuity}, and the equi-coercivity and lower semiconinuity of $\mathcal{P}_\e$ (\cite[Proposition 4.1]{Baldo-1990a}), we get that $\widetilde{\mathcal{I}} + \alpha \widetilde{\mathcal{P}}$ admits minimizers for any $\e>0$ and that the minimizing sequence is compact.
    We can then conclude the proof of Theorem~\ref{thm:main} by a direct application of the fundamental theorem of $\Gamma$--convergence.
\end{proof}

\begin{remark}
    Note that the hypotheses of Lemma~\ref{lem:equicoercivity} rule out a degenerate Hooke's law $\mathbb{C}= 0$ for any of the materials (\emph{i.e.} optimizing the distribution of $m-1$ materials with the $m^{th}$ materials playing the role of ``void''). 
    As is common practice, in this situation we  we introduce an artificial stiffness parameter $\eta$ and replace the ``void'' phase with a weak material with Hooke's law $\eta\mathrm{I}$. 
    Taking the limit as $\eta \to 0$ is technical (see for instance~\cite[Section 4]{BourdinChambolle2003}), and we did not attempt to study this limit here.
\end{remark}

\section{Numerical implementation}\label{sec:numerical-implementation}
In all that follows, we focus on the spacial case of three isotropic linear elastic materials, ``void'' (associated with $\rho_1$), a non-responsive material (associated with $\rho_2$), and a responsive material (associated with $\p_3$). 
For the ``void'' and non-responsive materials, we set $\beta_1=\beta_2=0$ in~\eqref{eq:statePFWeak}.
In the responsive material, we set $\beta_3 = 1$.

We handle the constraint $\p_1+\p_2+\p_3=1$ explicitly by substituting $\p_1=1-\p_2-\p_3$ and optimizing with respect to $\tp = (\p_2,\p_3)$ under the constraint $0 \le \p_2,\p_3\le 1$. 
Of course, this means that $\p_1$ only satisfies $-1 \le \p_1 \le 1$.
However, it is easy to see that the proof of Theorem~\ref{thm:main} still holds in this case, provided that $a$ be extended to $[-1,1]$ as an even function.
With an abuse of notation, we write $\mathcal{I}(\tp, \mathrm{s})$ and $\mathcal{P}_\e(\tp,\mathrm{s})$ to denote $\mathcal{I}((1-\rho_2-\rho_3,\rho_2,\rho_3),\mathrm{s})$ and $\mathcal{P}_\e((1-\rho_2-\rho_3,\rho_2,\rho_3),\mathrm{s})$ respectively.

It is then natural to enforce null-stimulus in materials 1 and 2, which is easily achieved by adding a penalty term of the form 
\begin{equation}\label{eq:stimulusPenalty}
    Q(\tp,\mathrm{s}) = \int_{\OO}((1-\rho_2-\rho_3)^2+\p_2^2)\sum_{j=1}^{n}s_j^2\,dx
\end{equation}
to the objective function.

Similarly, instead of enforcing the volume fraction constraints $\int_\Omega \p_i\, dx = \theta_i|\Omega|$, $i=2,3$ strongly, we introduce a penalty term
\begin{equation}\label{eq:volumeContraints}
    V_C(\tp) = \nu_2\int_{\OO}\p_2\;dx + \nu_3\int_{\OO}\p_3\,dx,
\end{equation}
where $\nu_2$ and $\nu_3$ are two penalty factors set by trial and error.


With these changes, our problem becomes
\begin{multline}
    \label{eq:objFunctionNumerics}
    \inf_{(\tp,\mathrm{s})\in \left[H^1(\OO;[0,1])\right]^2 \times \mathcal{S}} \mathcal{O}(\tp,\mathrm{s}) := \widetilde{\mathcal{I}}(\tp,\mathrm{s})
    + \alpha \mathcal{P}_\e(\tp)
    + V_C(\tp) + Q(\tp,\mathrm{s}).
\end{multline}

\subsection{Sensitivity analysis}\label{sec:sensitivity-analysis}
We use the adjoint method~\cite{DelosReyes2015} to compute the sensitivity of the objective to the design.

Given an admissible pair of design variables $(\p,\mathrm{s}) \in \mathcal{D}_\rho \times \mathcal{S}$ and $(u_1,\dots,u_n) \in V^n$ admissible displacement fields, we define the Lagrangian $\LL$
\begin{multline}
     \label{eq:numericalLagrangian}
     \LL(u_1,\ldots,u_n,\tp, \mathrm{s},\lambda_1,\ldots,\lambda_n) =
     \mathcal{O}(\tp,\mathrm{s}) 
     +\sum_{j=1}^n {\sum_{i=1}^3\int_\OO a(\p_i)\mathbb{C}_i\left(\ee(u_j)-\beta_i s_j\mathrm{I}_d\right)\cdot \ee(\lambda_j)\,dx},
\end{multline}
where $(\lambda_1,\dots \lambda_n) \in V^n$ are Lagrange multipliers.

Let  $(u_1(\tp,s_1),\dots,u_n(\tp,s_n))$ be the equilibrium displacements associated to $(\tp,\mathrm{s})$ satisfying~\eqref{eq:statePFWeak} and define
$$ 
    J(\tp,\mathrm{s})=\mathcal{O}\left(u_1(\tp,s_1), \dots, u_n(\tp,s_n)\right).
$$ 
so that 
$$
    \LL(u_1(\tp,s_1), \dots, u_n(\tp,s_n), \tp,s, \lambda_1,\ldots,\lambda_n) = J(\tp, \mathrm{s}).
$$
Formally, the directional derivative of $J$ in a direction $\phi \in \left[H^1(\Omega)\right]^2$ is given by:
\begin{multline}
    \label{eq:frechetDerivative}
    \left \langle \frac{\partial J}{\partial \tp},\phi \right \rangle
     =\left\langle \frac{\partial \LL}{\partial u_1},{\frac{\partial u_1}{\partial \tp}\phi} \right\rangle+\dots+\left\langle \frac{\partial \LL}{\partial u_n},{\frac{\partial u_n}{\partial \tp}\phi} \right\rangle
    + \left\langle \frac{\partial \LL}{\partial \tp},\phi \right\rangle.
\end{multline}
%
As is customary, instead of computing $\partial u_j/\partial \tp$, we choose Lagrange multipliers, $\lambda_1^*,\dots\lambda^*_n$ satisfying the adjoint equations:
\begin{equation}\label{eq:weakFormAdjoint}
    \left\langle \frac{\partial \LL}{\partial u_j},v\right\rangle = \int_{\OO_0}(u_j-\bar{u}_j)v\,dx -\sum_{i=1}^3\int_{\OO}a(\p_i)\mathbb{C}_i \ee(\lambda_j^*)\cdot\ee(v)\,dx=0,\ 1 \le j \le n.
\end{equation}
for any $v \in V$.
With this choice of Lagrange multipliers, we get
\[    
\left \langle \frac{\partial J}{\partial \tp}(\tp,s),\phi \right \rangle = \left\langle \frac{\partial \LL}{\partial \tp}(u_1(\tp,s_1), \dots, u_n(\tp,s_n), \tp, \mathrm{s}, \lambda_1^*, \dots \lambda_n^*),\phi \right\rangle,
\]
\emph{i.e.,}
\begin{multline}
    \label{eq:sensitivityRho}
    \left \langle \frac{\partial J}{\partial \tp}(\tp,s),\phi \right \rangle =  \int_\Omega \alpha \nabla \mathcal{P}_\e(\tp)\cdot \phi +
    \nu_1 \phi_1 + \nu_2\phi_2 \, dx \\
    + \sum_{j=1}^n\sum_{i=1}^3\int_\OO a'(\rho_i)(\mathbb{C}_i\left(\ee(u_j)-\beta_i s_j\mathrm{I}_d\right)\cdot \ee(\lambda_j^*)\phi_i \,dx + \int_\Omega \nabla_{\tp} Q(\tp,\mathrm{s})\cdot \phi\, dx,
\end{multline}
with the convention $\phi_1 =  1-\phi_2 - \phi_3$.

Similarly, for $\psi \in \left[H^1(\Omega)\right]^2,$ the directional derivative of $J$ with respect to $\mathrm{s}$ in the direction $\psi$ is given by:
\begin{multline}
    \label{eq:frechetDerivativeWrtS}
    \left \langle \frac{\partial J}{\partial \mathrm{s}}(\p,\mathrm{s}),\psi \right \rangle
     =\left\langle \frac{\partial \LL}{\partial u_1},{\frac{\partial u_1}{\partial \mathrm{s}}\psi} \right\rangle+\dots+\left\langle \frac{\partial \LL}{\partial u_n},{\frac{\partial u_n}{\partial \mathrm{s}}\psi} \right\rangle
    + \left\langle \frac{\partial \LL}{\partial \mathrm{s}},\psi \right\rangle
\end{multline}
and with the same choice of Lagrange multipliers $\lambda_1^*,\ldots,\lambda_n^*$ satisfying the adjoint equations $(\ref{eq:weakFormAdjoint}),$ we get
\begin{multline}
    \label{eq:sensitivityS}
    \left \langle \frac{\partial J}{\partial \mathrm{s}}(\p,s),\Psi \right \rangle = -\sum_{i=1}^3\int_\OO \left[\mathbb{C}_ia(\p_i)\left(\beta_i\mathrm{I}_2\right)\cdot \ee(\lambda_j^*)\right] \cdot \psi_i\,dx + \nabla_\mathrm{s} Q(q,\mathrm{s})\cdot \psi\, dx.
\end{multline}

At each iteration of the gradient-based minimization algorithm, given $(\tp,\mathrm{s})$, the computation of the sensitivity of our objective function with respect to a design change (resp. with respect to a stimulus change) involves computing the equilibrium displacements $u_1(\tp,\mathrm{s}),\dots u_n(\tp,\mathrm{s})$ by solving $n$ linearized elasticity problems~\eqref{eq:statePFWeak}, then computing the associated adjoint variables $\lambda_1^*, \dots,\lambda_n^*$ using~\eqref{eq:weakFormAdjoint} before evaluating~\eqref{eq:sensitivityRho} (resp. ~\eqref{eq:sensitivityS}.)

\subsection{Minimization with respect to  $\mathrm{s}$}\label{sec:optimal-stimulus}
Observe that, minimizing the objective function $(\ref{eq:objFunctionNumerics})$ is equivalent to minimizing the Lagrangian $(\ref{eq:numericalLagrangian})$ associated with the objective function. For simplicity, we only consider one prescribed displacement $\Bar{u}$ and its corresponding stimulus $s.$ For a fixed design $\p$, the minimization with respect to $s$ is equivalent to 
\begin{spreadlines}{2ex}
\begin{equation}\label{eq:langrangianToStimulus}
\begin{dcases}
\begin{aligned}
      &\min_{s \in \mathcal{S}}\;\; \LL(u,\p,s,\lambda)\text{ defined in }(\ref{eq:numericalLagrangian})\\
     &\text{subject to}\;\;-1 \leq s \leq 1.
\end{aligned}
\end{dcases}
\end{equation}
\end{spreadlines}
By expanding the Lagrangian $(\ref{eq:numericalLagrangian})$ and collect all the terms explicitly depending on $s$, we have
\begin{align*}
    \min_{s \in \mathcal{S}} \LL(u,\rho,s,\lambda) &\iff \min_{s \in \mathcal{S}} -\int_{\OO}s\p_3^2\CCC_3\mathrm{I}_2 \cdot \ee(\lambda)\;dx+\int_{\OO}((1-\p_2-\p_3)^2+\p_2^2)s^2\;dx,\\
    &\iff \min_{s \in \mathcal{S}} -s\p_3^2\CCC_3 \mathrm{I}_2 \cdot \ee(\lambda)+((1-\p_2-\p_3)^2+\p_2^2)s^2\\
    &\iff \min_{s \in \mathcal{S}} -s\p_3^2\CCC_3\ee(\lambda)\cdot \mathrm{I}_2+((1-\p_2-\p_3)^2+\p_2^2)s^2\\
    &\iff \min_{s \in \mathcal{S}} -s\p_3^2d\kappa_3\text{tr}(\ee(\lambda))+((1-\p_2-\p_3)^2+\p_2^2)s^2,
\end{align*}
where $\kappa_3 := \frac{d\lambda_3^{**} +2\mu_3^{**}}{d}$ is the bulk modulus of the responsive material. The expression above is quadratic in $s$ in the form of $As+Bs^2$ and its minimizer $s^{*}$ is given as: if $A = 0,$ then $s^{*} = 0,$ and if $2B < |A|$ then $s^{*} = 1$ (resp.  $-1$) if  $\text{tr}(\ee(\lambda))> 0$  (resp. $\text{tr}(\ee(\lambda))< 0$). Otherwise $s^{*} = A/2B$.

\begin{remark}
As $\varepsilon \to 0,$ one can see, if $\p_3 = 0$ at any point $x \in \OO$ \emph{i.e.} we have no responsive material at point $x$, then the optimal stimulus at point $x$ becomes $0.$ And if $\p_3 = 1$ any point $x \in \OO$ \emph{i.e.} we have responsive material at point $x$, then the optimal stimulus becomes either $1$ or $-1$ depending on the sign of $\text{tr}(\ee(\lambda)).$
and the closed form of an optimal stimulus is given as:
\begin{equation}\label{eq:optimaltimulus}
   s^{*}= \begin{cases} 
      1 & \text{if tr}(\ee(\lambda))> 0\text{ and }\p_3=1\\
      0 & \text{if}\;\p_3=0,\\
      -1 & \text{if tr}(\ee(\lambda))< 0\text{ and }\p_3=1.
   \end{cases}
\end{equation}
\end{remark}

\subsection{Minimization algorithm}
\label{sec:minAlgo}

Our implementation uses Firedrake~\cite{FiredrakeUserManual}, an open source automated system for the solution of partial differential equations using the finite element method.
The displacement, density, and stimulus fields are discretized using linear Lagrange simplicial finite elements over structured meshes.
The minimization algorithm we used is a BNCG (Bounded Nonlinear Conjugate Gradient) solver implemented in the TAO (Toolkit for Advanced Optimization) optimization package, which is a part of PETSc (the Portable, Extensible Toolkit for Scientific Computation) library~\cite{petsc-efficient,petsc-user-ref,petsc-web-page}.
BNCG only requires first order derivatives of the objective function, which we computed in close form (\emph{i.e.} optimize then discretize)  in Section~\ref{sec:sensitivity-analysis}.

We tested two different numerical approaches. 
In a \emph{monolithic scheme}, we jointly minimize $\mathcal{O}$ with respect to $\tp$ and $\mathrm{s}$ simultaneously using~\eqref{eq:sensitivityRho} and~\eqref{eq:sensitivityS}.
In a \emph{staggered scheme}, we use TAO to minimize $\mathcal{O}$ with respect to $\tp$ only. 
Whenever computing $\frac{\partial J}{\partial \tp}$ using~\eqref{eq:sensitivityRho}, 
we perform a full minimization of $\mathcal{O}$ with respect to $\mathrm{s}$ using~\eqref{eq:optimaltimulus} after computing the state and adjoint variables $u_j$ and $\lambda_j$ ($1 \le j \le n$).

In each case, we leverage TAO's line search and convergence criteria.

\section{Numerical results}\label{sec:numerical-results}
We present a series of numerical simulations illustrating the strengths of our approach.
In all that follows, we use
\begin{equation}
    \label{eq:defW}
    W(\p) := \p_1^2(1-\p_1)^2+\p_2^2(1-\p_2)^2+\p_3^2(1-\p_3)^2
\end{equation}
as the multi-well potential in $\mathcal{P}_\e$~\eqref{eq:defPe}.
Note that although this function admits 8 roots in $\mathbb{R}^3$, once restricted to the hyperplane $\rho_1 + \rho_2 + \rho_3=1$, it admits only three roots and is therefore admissible in~\eqref{eq:defPe}.
Note that $W$ does not satisfy~\eqref{eq:normalizationW} as such.
A simple rescaling of $W$ and $\alpha$ would be sufficient to do so. 
We use a simple quadratic material interpolation function $a(s) := s^2$.

\subsection{Cantilever beam}
We start with a simple problem with only one prescribed displacement $\bar{u},$ \emph{i.e.} $n=1$ in~\eqref{eq:objectiveFunctionNoPerimeter}. The design domain  $\OO = (0,L_x) \times (0,L_y)$ with $L_x=1$ and $L_y=1/3$.
We prescribe null displacement on the left side $\Gamma_D$ of $\Omega$ and set $\OO_0= (L_x-a,L_x) \times(L_y/2-a/2,L_y/2+a/2)$ with  $a=1/15$.

The prescribed displacement on $\OO_0$ is taken as $\bar{u}=[0,1]^T$ \emph{i.e.} we want the region $\OO_0$ to move upward. The domain $\OO$ is discretized with a structured mesh with cell size $h = \num{2e-3}$. 
The regularization parameter is $\e = \num{2e-3}$ and the perimeter penalization parameter is $\alpha = \num{6e-4}$.
The relative and absolute tolerance  on the gradient and  objective function in TAO were set to $\num{1e-6}$ as it was observed that tighter tolerances do not lead to significant differences in the designs produced.

We first compare the numerical approaches described above.
Figure~\ref{fig:schemeComparison} shows the density of non-responsive ($\rho_2$) and responsive ($\rho_3$) materials, as well as a composite plot showing the non-responsive material in black and the responsive material coloured according to the value of the stimulus $s$ with $s = 1$ in red, $s = 0$ in white, and $s = -1$ in blue. 
In both case, $\rho_2$ and $\rho_3$ are initialized with a constant value 0.3. 
The penalty terms are set to $\nu_2 = 0.1$ and $\nu_3 = 0.3$, and both materials are isotropic homogeneous with non-dimensional Young's modulus 5 and Poisson ratio 1.

We observe that the methods lead to different but well-defined designs, where the material densities are well focussed near 0 and 1, as expected in the phase-field approach.
Upon convergence, the volume fractions of responsive and non-responsive materials are 13.2\% and 15.7\% for the monolithic approach and 12.2\% and 17.5\% for the staggered scheme.
The monolithic approach converged in 380 iterations of TAO's BNCG solver. The final value of the objective function is $\mathcal{O} = \num{4.49e-1}$.
The staggered solver converged in just 173 iteration leading to an objective function $\mathcal{O}= \num{4.17e-3}$.
Considering the non-convexity of the problem, it is not expected that either algorithm will provide a global minimizing design, hence the fact that both approaches lead to significantly different designs is not surprising.
Interestingly, though, the mechanism for the deformation of the two structures is very different. While the  monolithic approach leads to a truss-like structure with deforming cross-members, the geometry of the design obtained by the staggered scheme is more reminiscent of a bi-metal strip where the outer layers deform and the shear stiffness of the central area is maximized.   

All results presented further in this article were computed using the staggered scheme.

\begin{figure}[h!]
    \centering
    \includegraphics[width=0.45\textwidth]{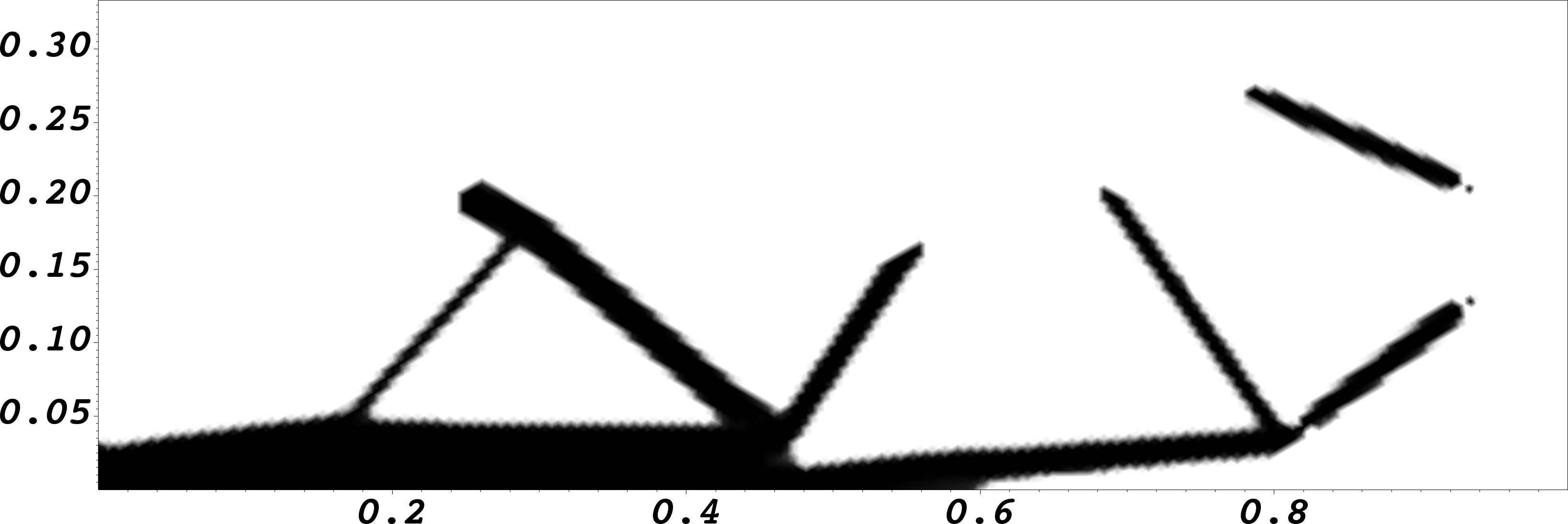}
    \hspace*{.05\textwidth}
    \includegraphics[width=0.45\textwidth]{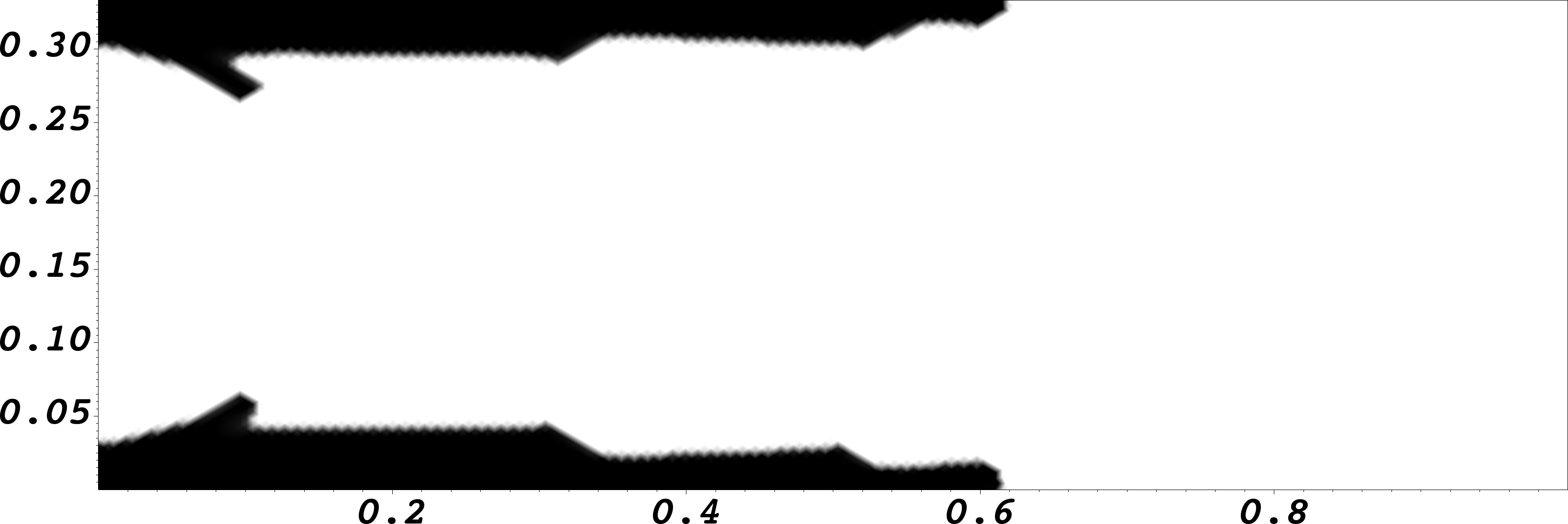}
    \includegraphics[width=0.45\textwidth]{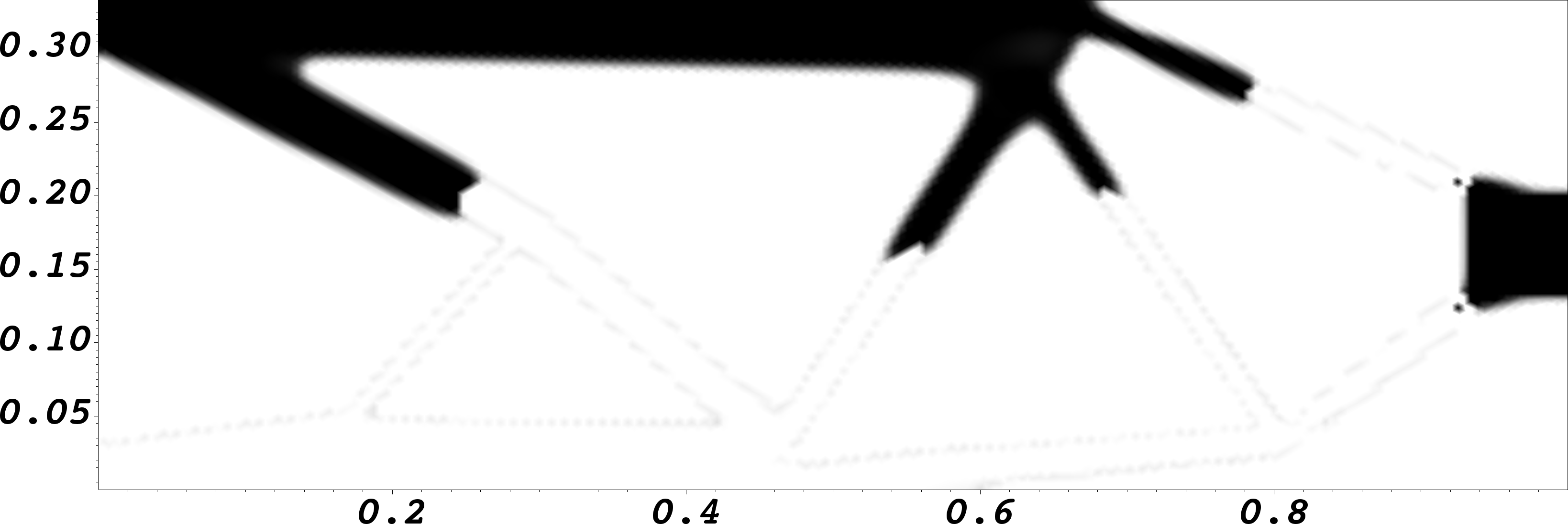}
    \hspace*{.05\textwidth}
    \includegraphics[width=0.45\textwidth]{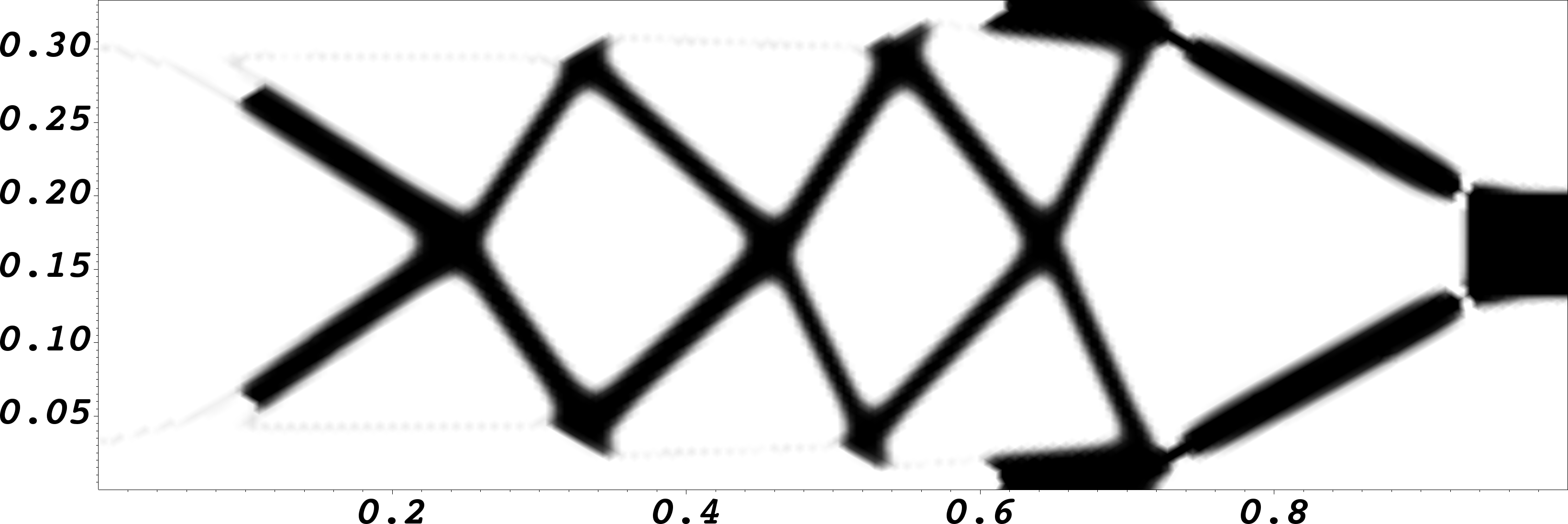}
    \includegraphics[width=0.45\textwidth]{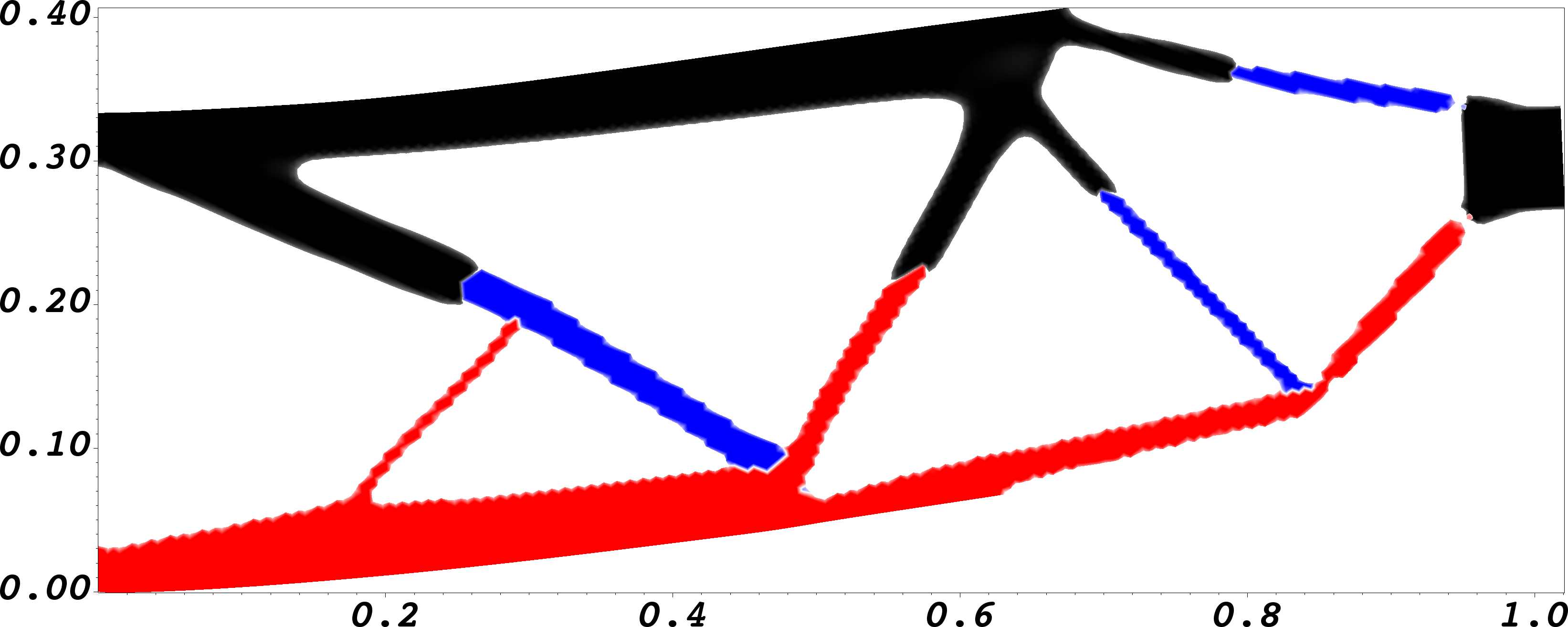}
    \hspace*{.05\textwidth}
    \includegraphics[width=0.45\textwidth]{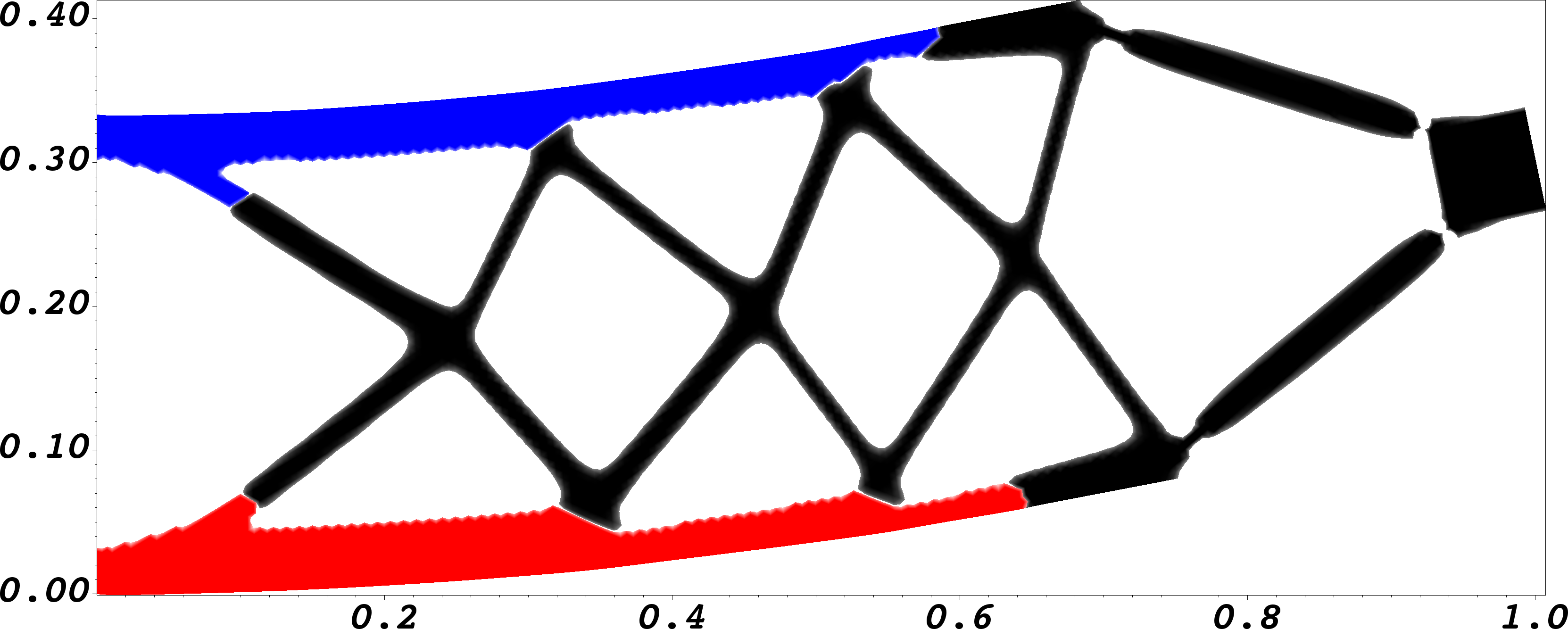}
    \caption{Monolithic (left) vs. staggered (right) scheme. Responsive material density (top), non-responsive material density (middle), and composite plot of both material density and the stimulus in the deformed configuration.}
    \label{fig:schemeComparison}
\end{figure}

Changing the ratio of the Young's modulus of the two materials leads to very different designs.
In Figure~\ref{fig:BeamRatio2}, the Young's modulus of the non-responsive material has been increased to 10.
The penalty terms are respectively $\nu_2 = 0.24$ and $\nu_3 = 0.12$ (left) and $\nu_2 = 0.18$ and $\nu_3 = 0.12$ (right).
When the penalty term on the stiffer, non-responsive material is high enough, the structure consists entirely of the weaker responsive material, whereas decreasing this parameter leads back to rigid truss-like structures activated by small regions of responsive material.

\begin{figure}[h!]
    \centering
    \includegraphics[width=0.45\textwidth]{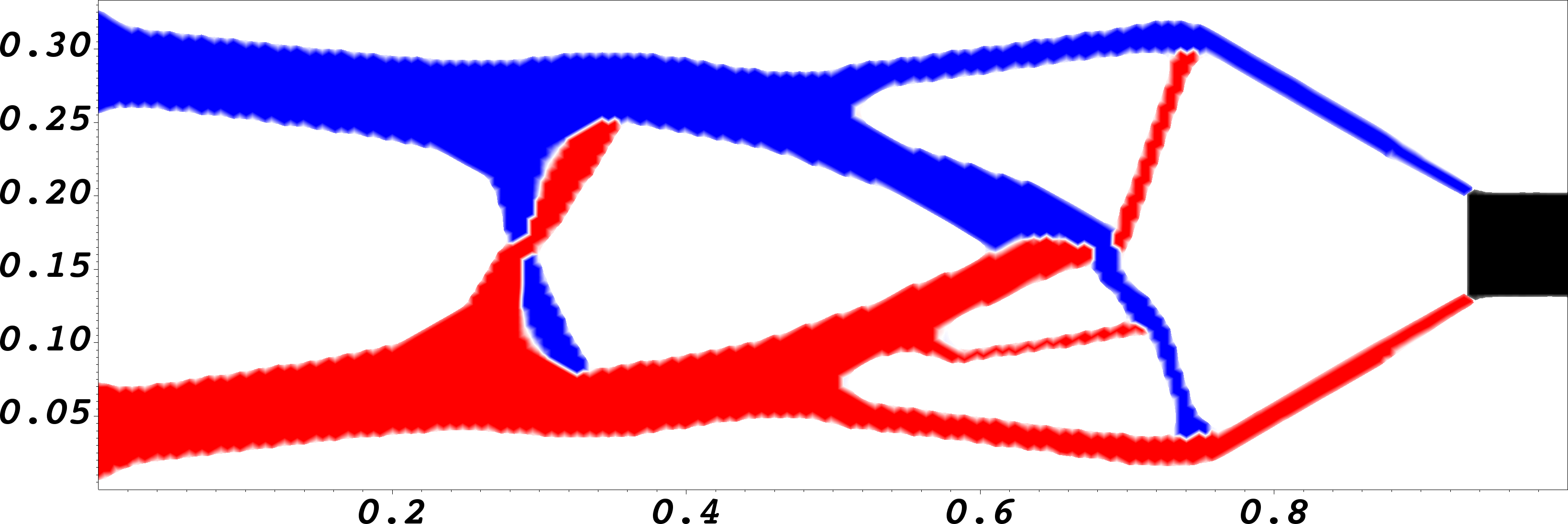}
    \hspace*{.05\textwidth}
    \includegraphics[width=0.45\textwidth]{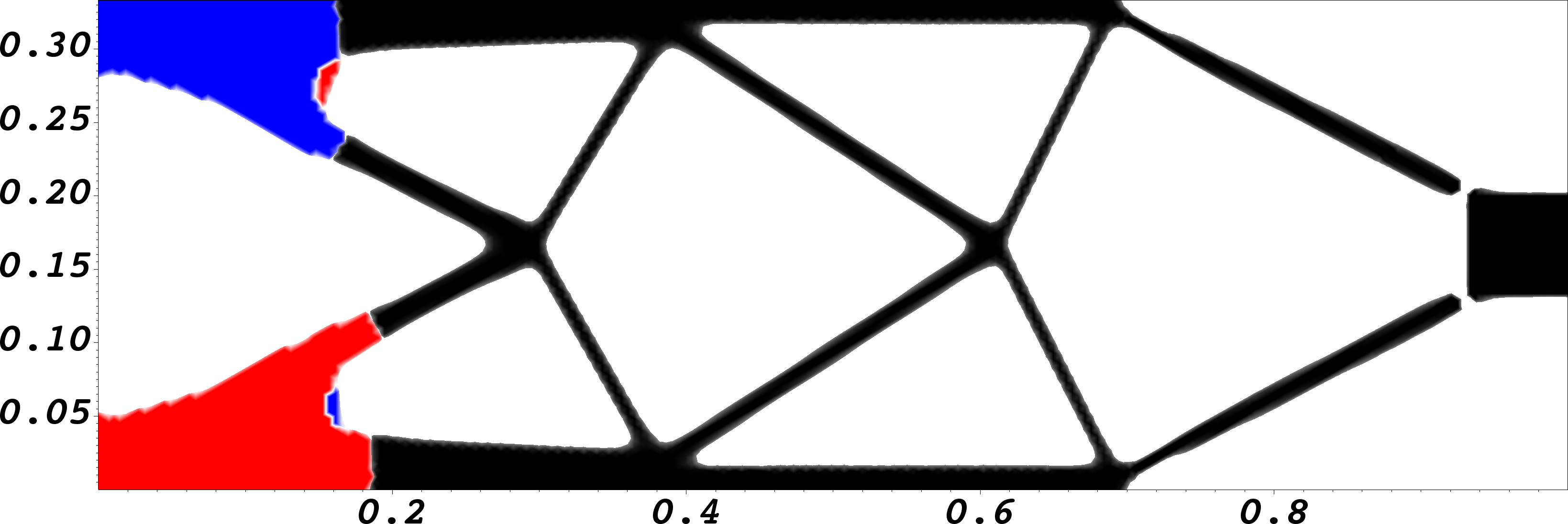}
    \includegraphics[width=0.45\textwidth]{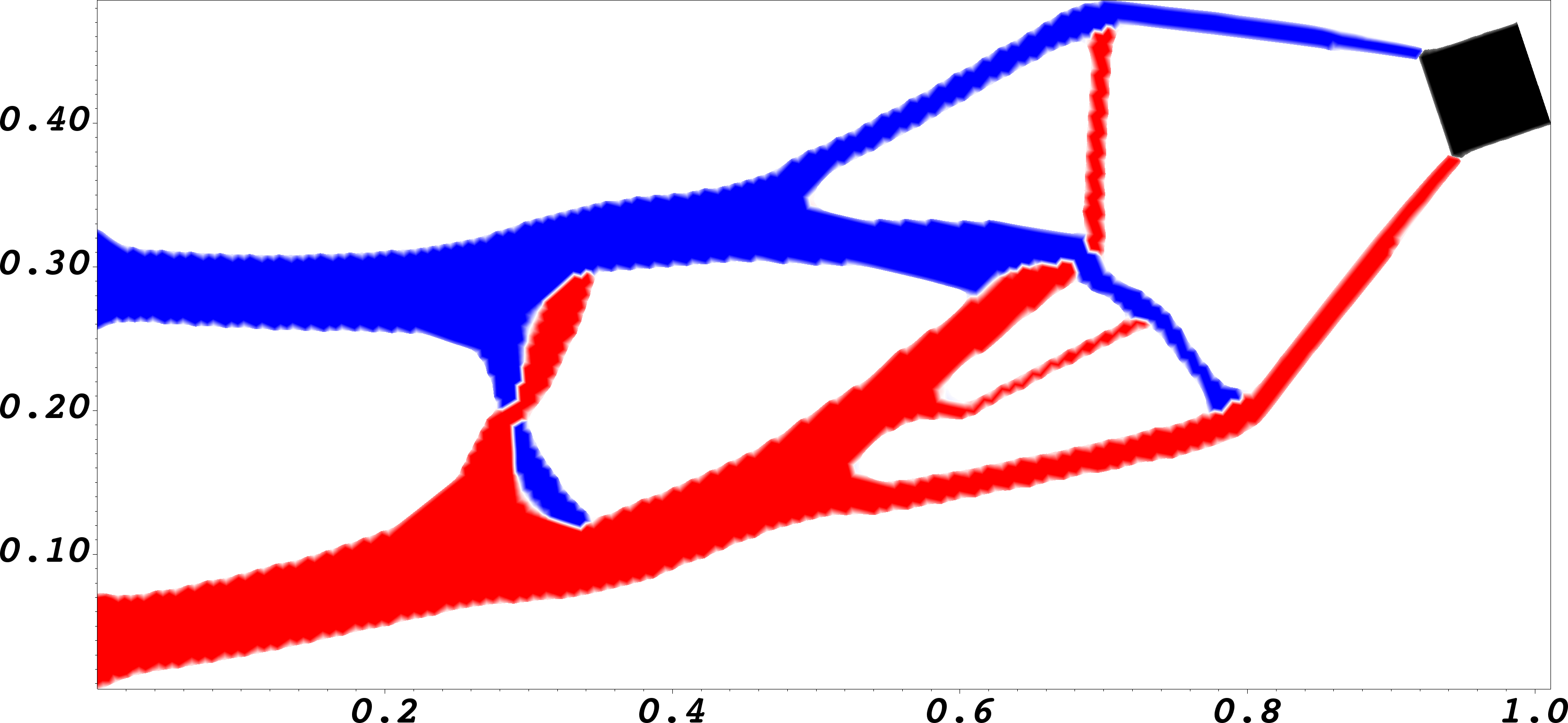}
    \hspace*{.05\textwidth}
    \includegraphics[width=0.45\textwidth]{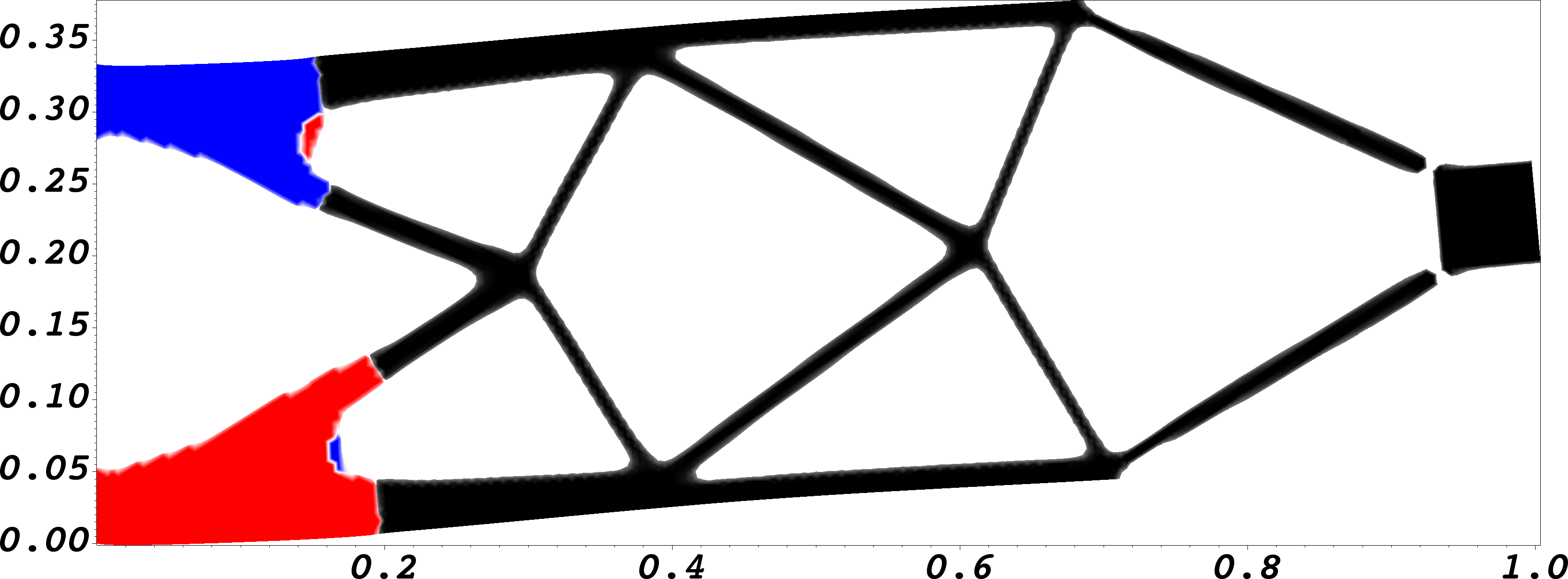}
    \caption{Optimized structure with a ratio $E_2/E_3 = 2$ in the reference (top) and deformed  configuration (bottom) with $\nu_2 = 0.24$ and $\nu_3 = 0.12$ (left) and $\nu_2 = 0.18$ and $\nu_3 = 0.12$ (right).}
    \label{fig:BeamRatio2}
\end{figure}

\subsection{Cantilever beam with two target displacements}
In a second numerical example, we consider two target displacements ($n=2$ in~\eqref{eq:objectiveFunctionNoPerimeter}).
The young's modulus of both materials is set to 5.
In Figure~\ref{fig:Beam2LoadsI}, the target displacements are $(0,1)$ and $(0,2)$.
The penalty terms are $\nu_2 = 0.5$ and $\nu_3 = 0.7$, leading to volume fraction of the responsive and non-responsive material of respectively 15\% and 21\%. 
As the materials are linear elastic, approaching the target displacement $(0,1)$ could have been obtained by rescaling the stimulus in of target displacement $(0,2)$.
Instead, our scheme generates a more complex geometry and activation scheme.
\begin{figure}[h!]
    \centering  
    \includegraphics[width=.45\textwidth]{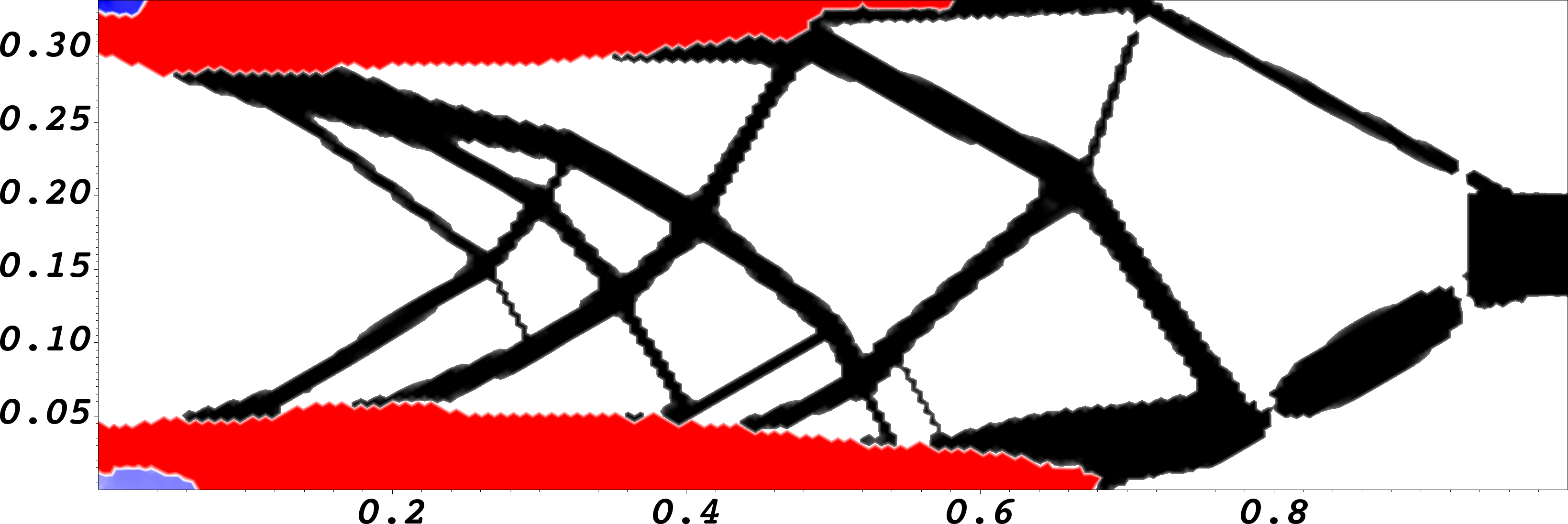}    
    \includegraphics[width=.45\textwidth]{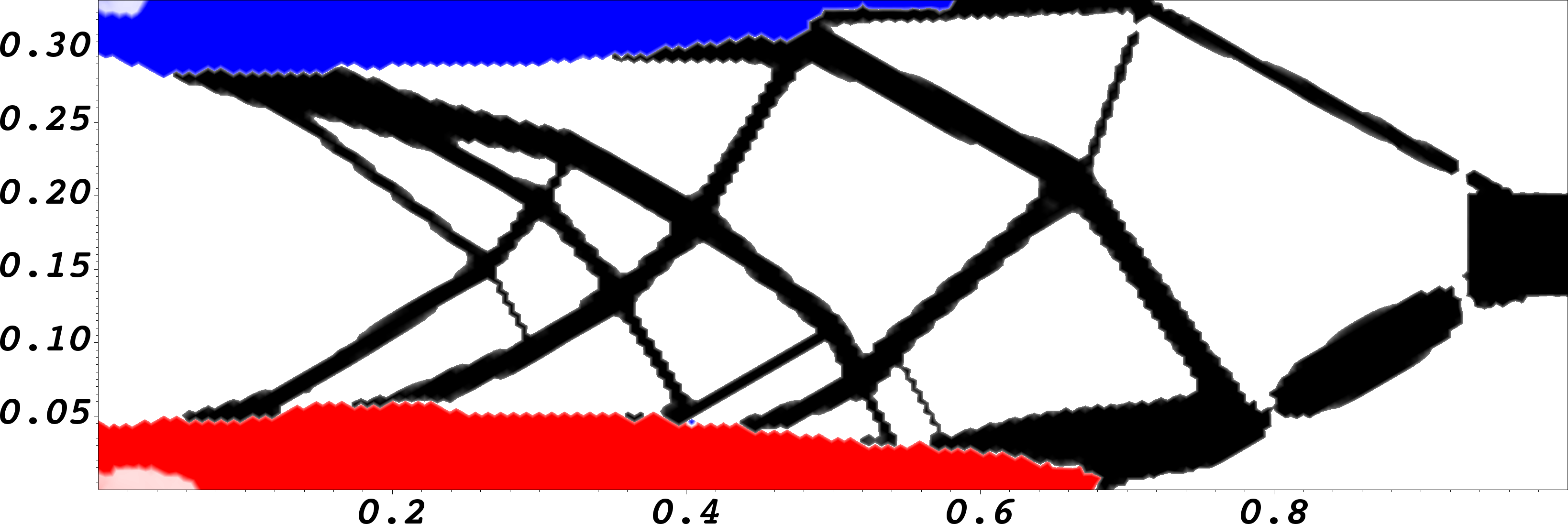}    
    \includegraphics[width=.45\textwidth]{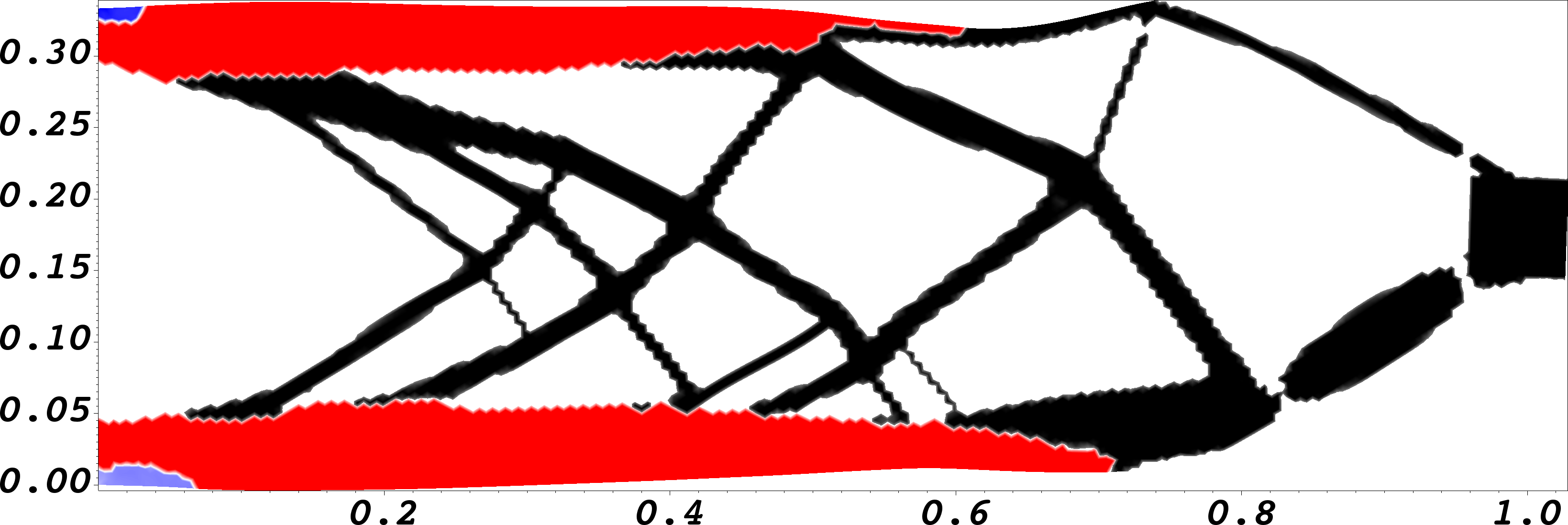}    
    \includegraphics[width=.45\textwidth]{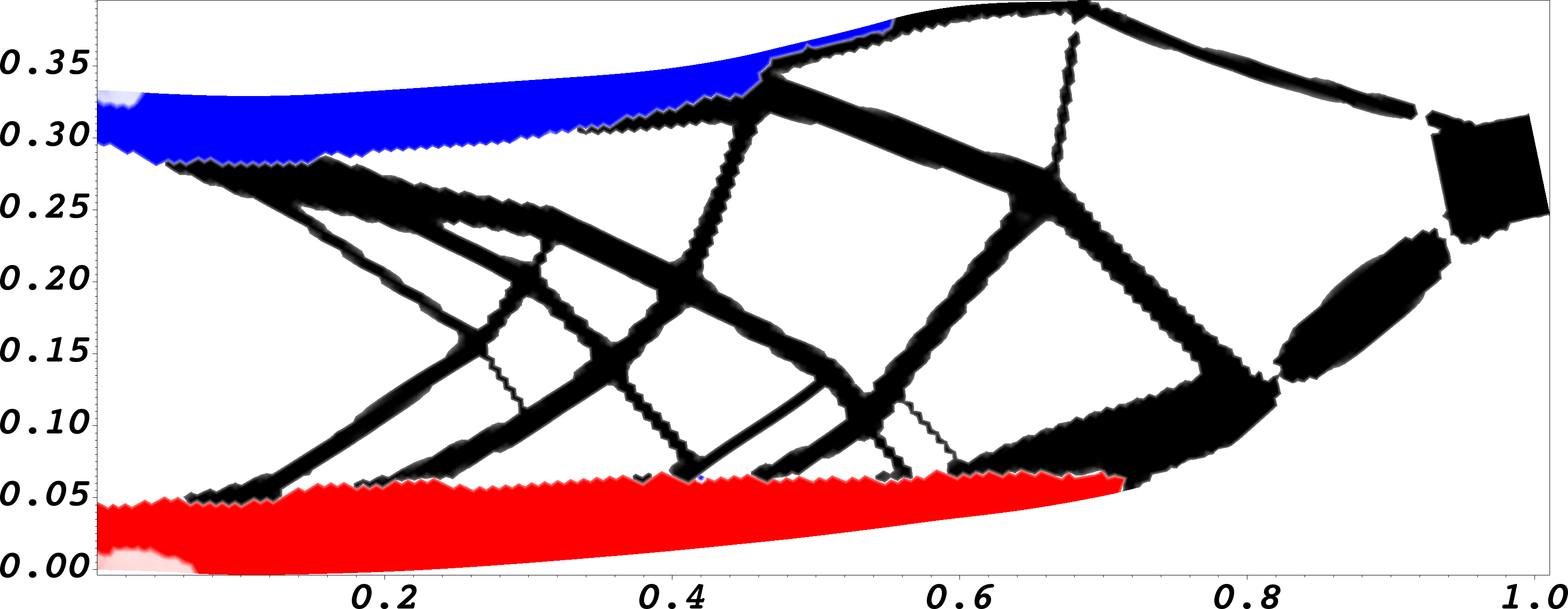}    
    \caption{Optimal design of a beam with two target displacements. (left) Material distribution and stimulus for a target displacement of $(0,1)$ in the reference (top) and deformed (bottom) configuration. (right) Material distribution and stimulus for a target displacement of $(0,2)$ in the reference (top) and deformed (bottom) configuration.}
    \label{fig:Beam2LoadsI}
\end{figure}

In Figure~\ref{fig:Beam2LoadsL}, the target displacements are $(1,0)$ and $(0,1)$.
All other parameters remain the same as in Figure~\ref{fig:Beam2LoadsI}.
The volume fraction of responsive and non-responsive material are respectively 12\% and 9\%.
The responsive material is laid out in simple regions while the  non-responsive material layout forms a stiff truss structure.

\begin{figure}[h!]
    \centering  
    \includegraphics[width=.45\textwidth]{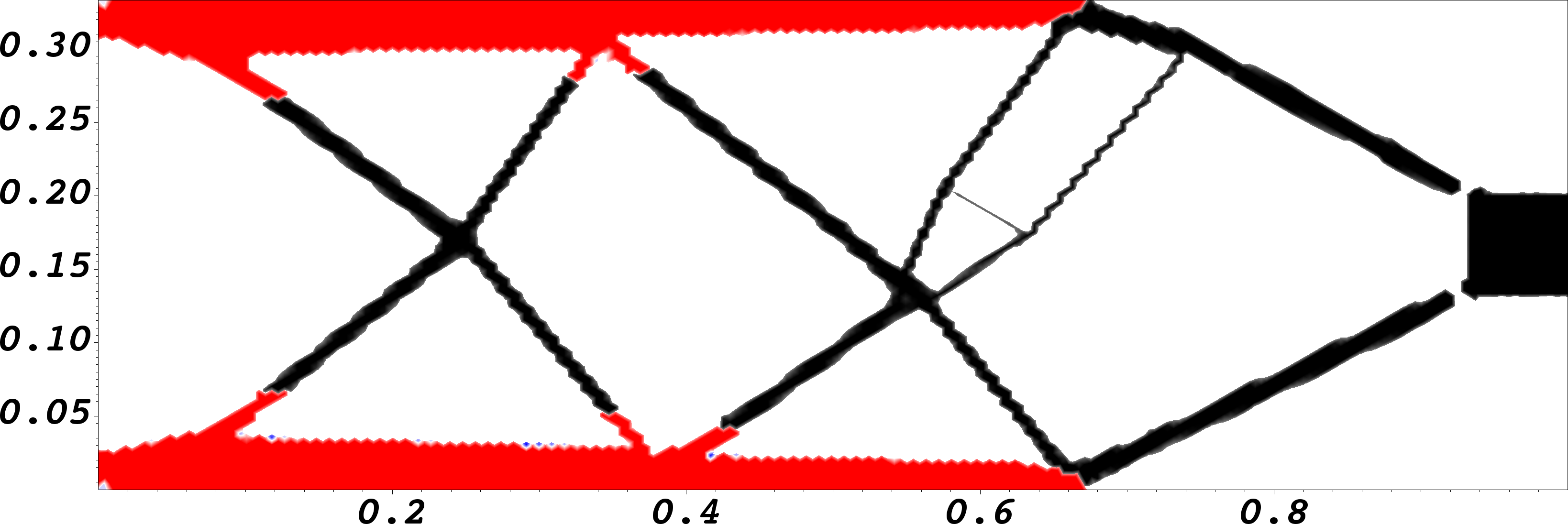}    
    \includegraphics[width=.45\textwidth]{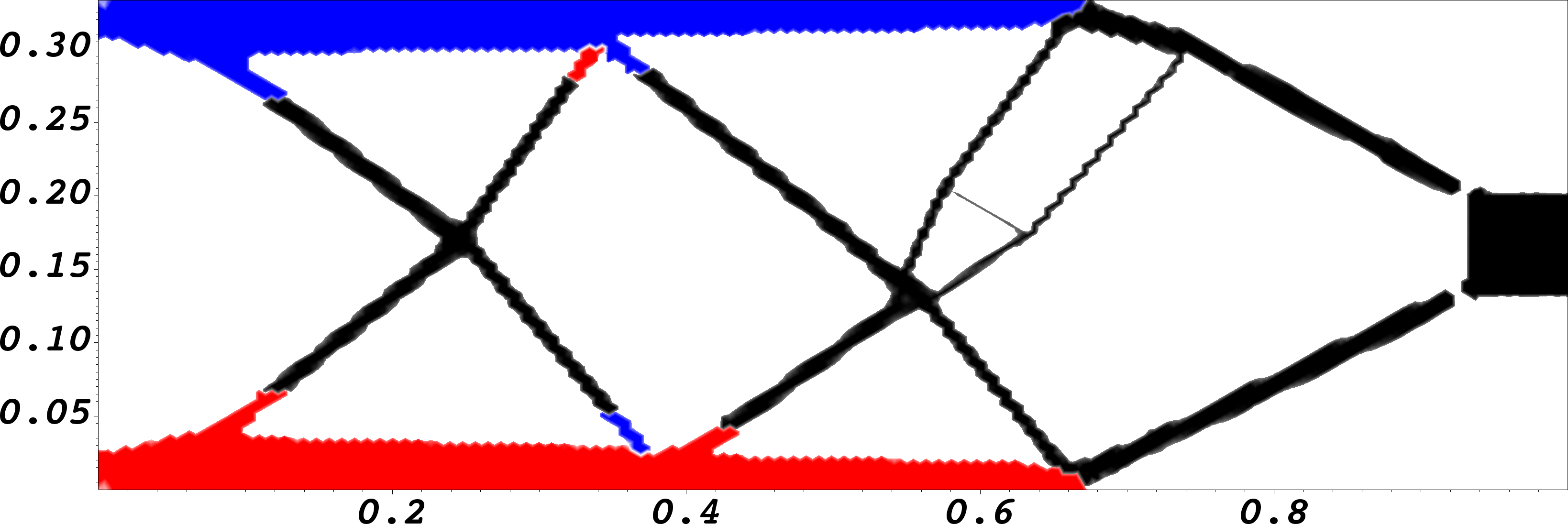}    
    \includegraphics[width=.45\textwidth]{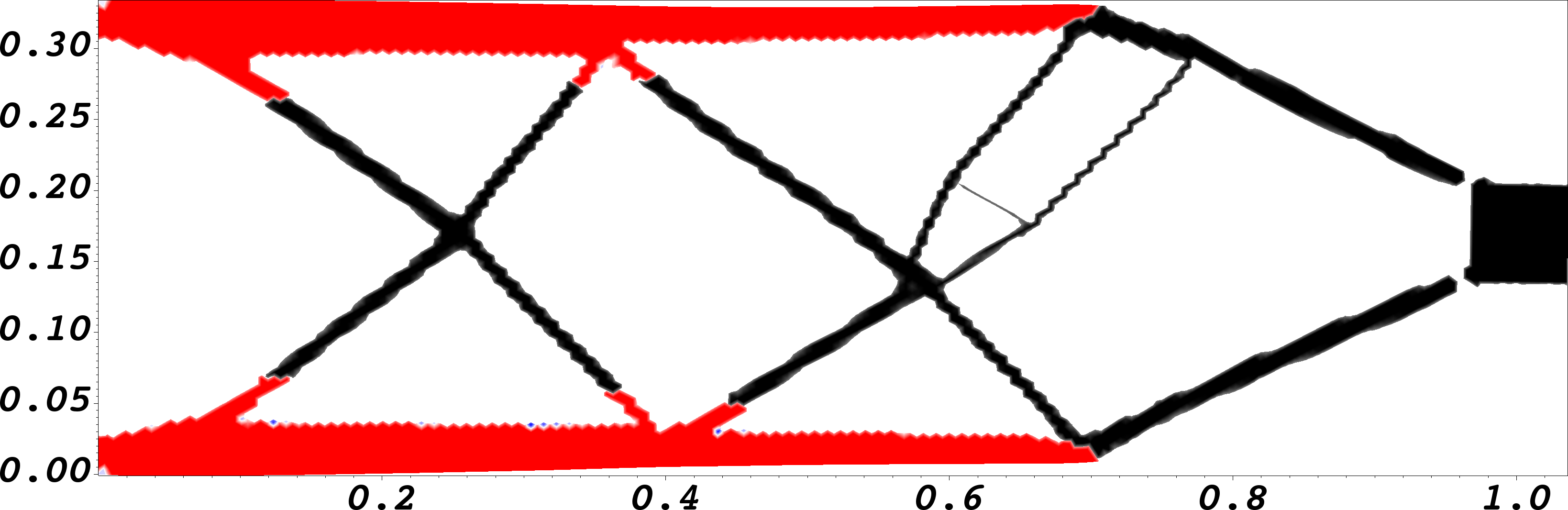}    
    \includegraphics[width=.45\textwidth]{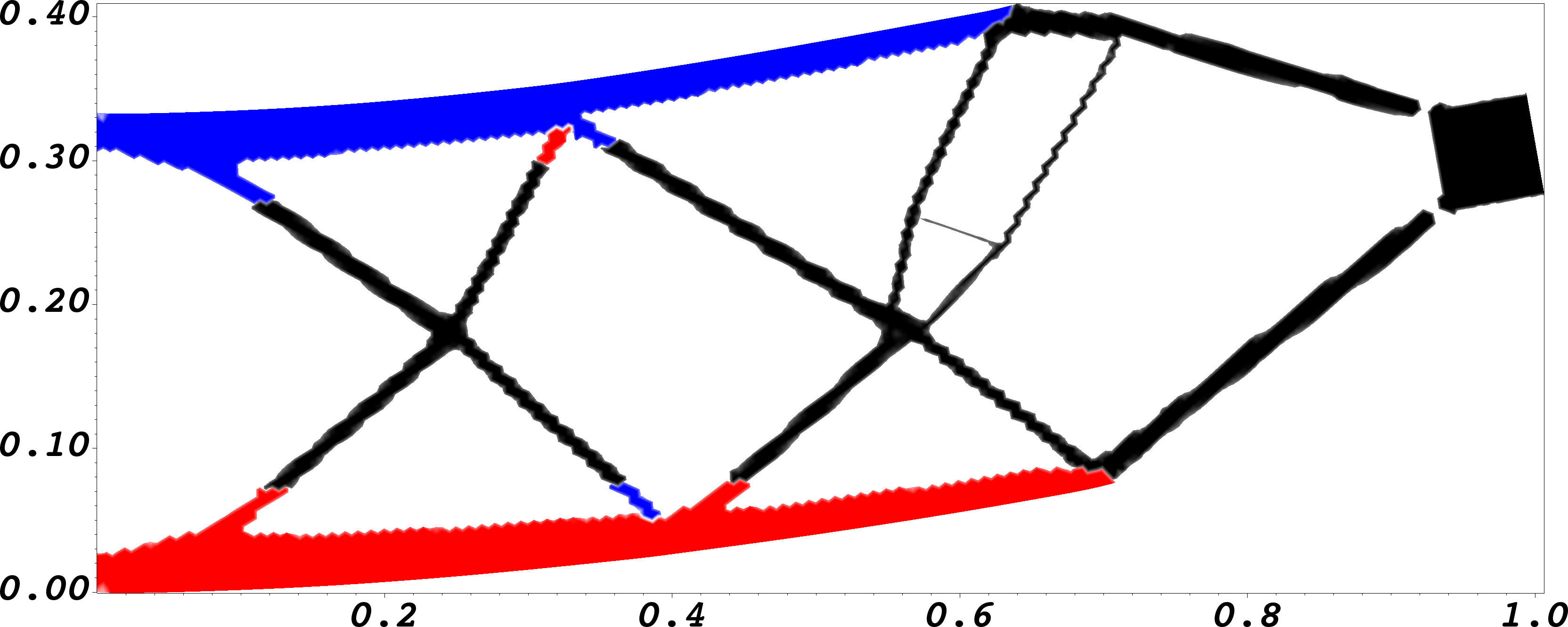}    
    \caption{Optimal design of a beam with two target displacements. (left) Material distribution and stimulus for a target displacement of $(1,0)$ in the reference (top) and deformed (bottom) configuration. (right) Material distribution and stimulus for a target displacement of $(0,1)$ in the reference (top) and deformed (bottom) configuration.}
    \label{fig:Beam2LoadsL}
\end{figure}

\subsection{Hexagonal domain with three prescribed displacements}

Our third example is inspired by the Stewart platform parallel manipulator.
We consider a regular hexagonal domain with edge length 0.35 clamped on three non-consecutive edges (see Figure~\ref{fig:hexagonalDomain}). 
The target displacements of a centered regular hexagon with edge length 0.035 are $\bar{u}_1=(\cos{(0)},\sin{(0)}), \bar{u}_2=(-\cos{(\pi/3)},\sin{(\pi/3)})$ and $\bar{u}_3=(-\cos{(\pi/3)},-\sin{(\pi/3)}).$ 

\begin{figure}[h!]
    \centering
    \includegraphics[width=0.3\textwidth]{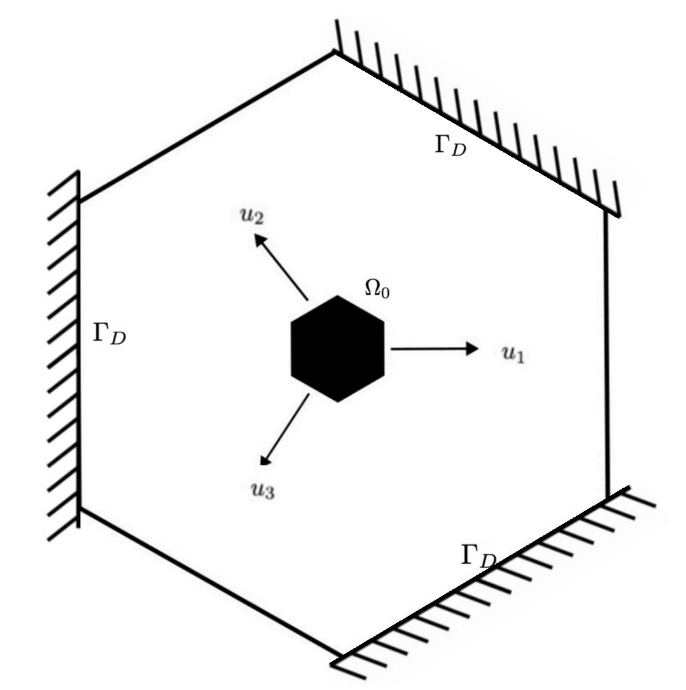}
    \caption{Hexagonal domain clamped at three sides $\Gamma_D$}
    \label{fig:hexagonalDomain}
\end{figure}

In Figure~\ref{fig:Hexagon-Ratio01-08}, the Young's modulus of the responsive and non-responsive materials are set to \num{5e-3} and \num{5e-2} respectively. 
The penalty factor on the stiffer material is set to a much higher value than that of the responsive materials ($\nu_2 = 0.7$ and $\nu_3 = 0.03$).
The perimeter penalty factor is set to \num{3.5e-4} and the regularization length to \num{2e-3}, as above. 
This leads to slender structures activated by large ``pads''.
We note that although that designs and stimuli are invariant by a $2\pi/3$ rotational symmetry, which was not enforces in the computations.

\begin{figure}[h!]
    \centering
    \includegraphics[width=0.3\textwidth]{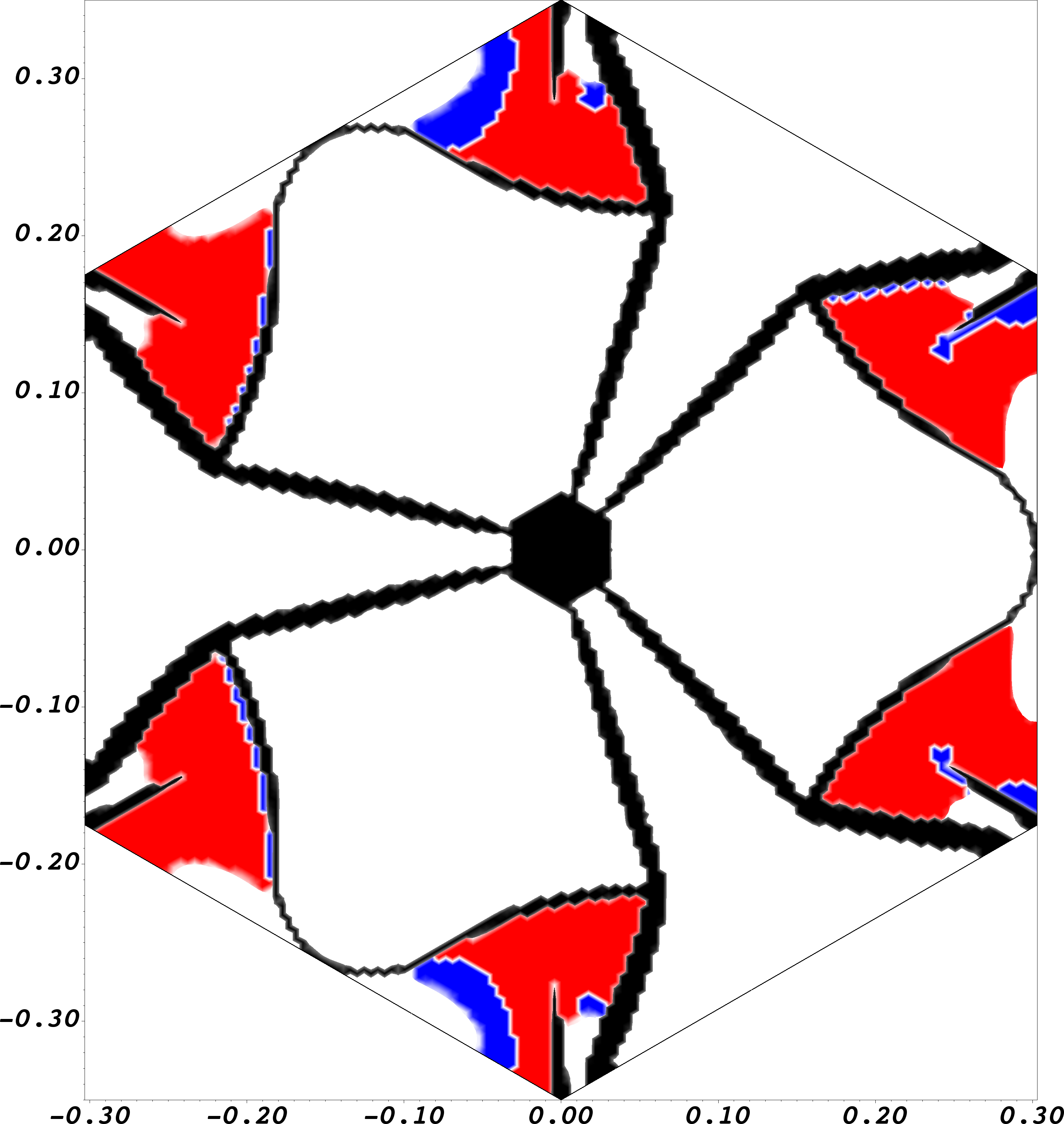}
    \includegraphics[width=0.3\textwidth]{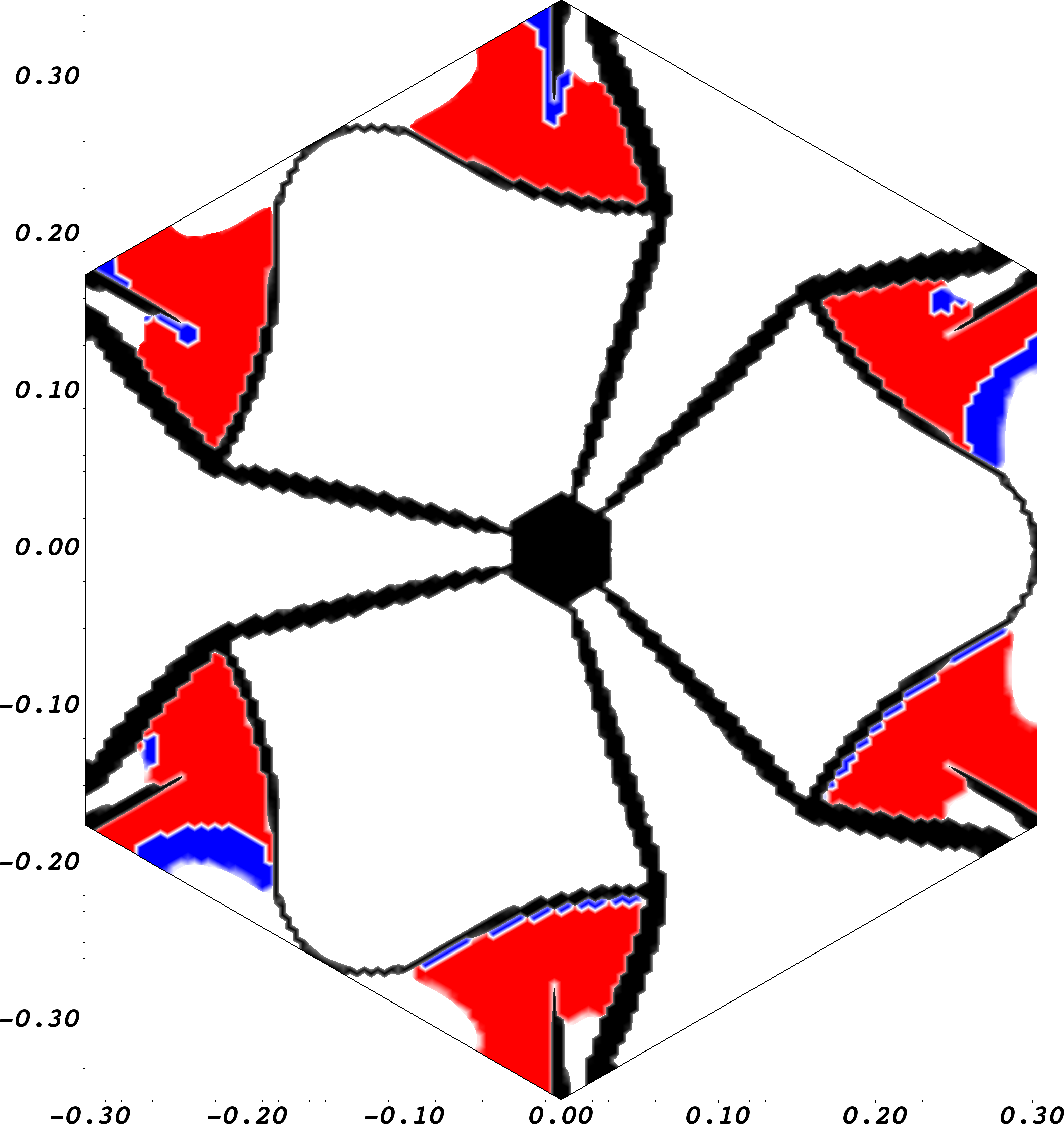}
    \includegraphics[width=0.3\textwidth]{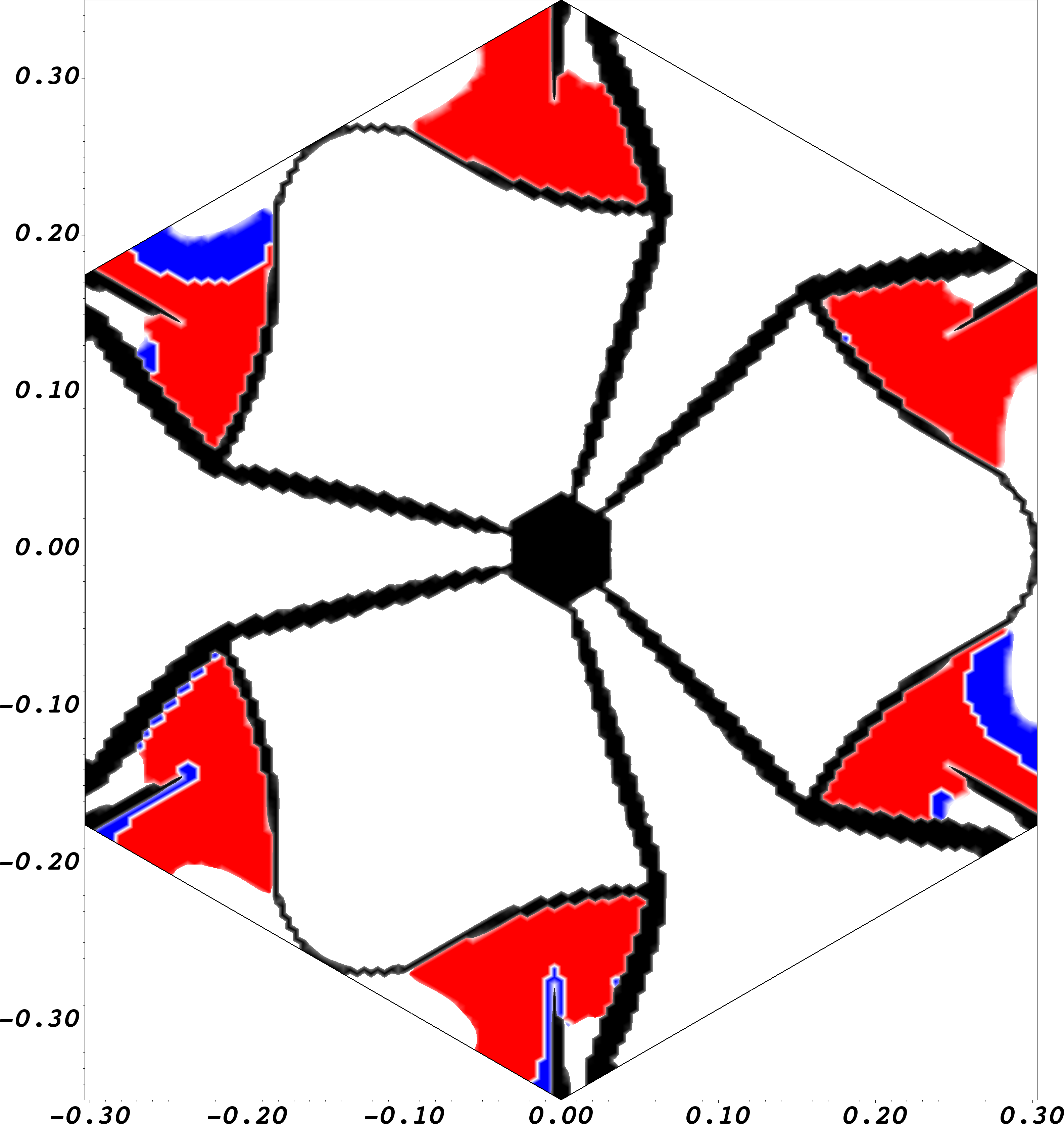}
    \includegraphics[width=0.3\textwidth]{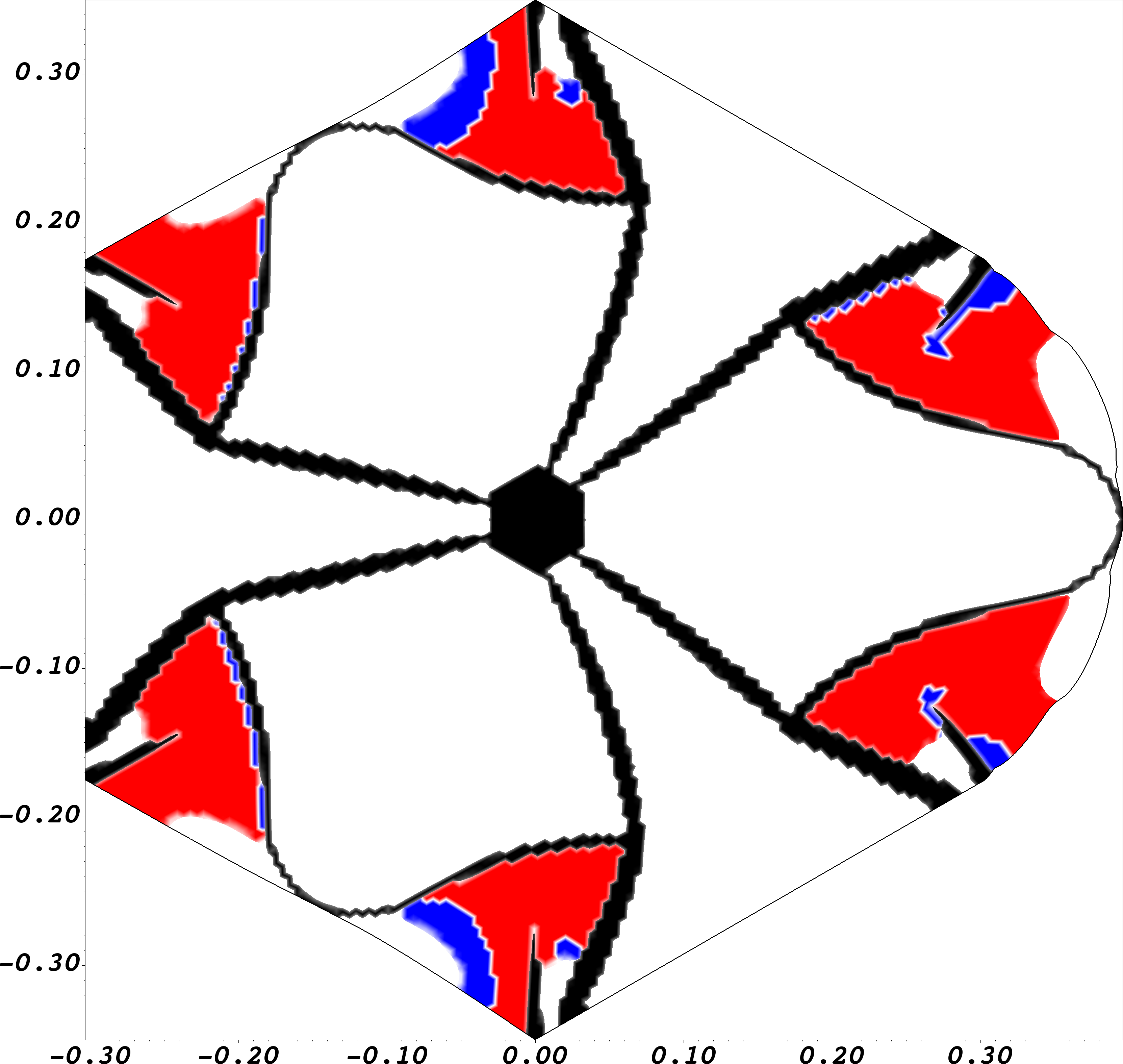}
    \includegraphics[width=0.3\textwidth]{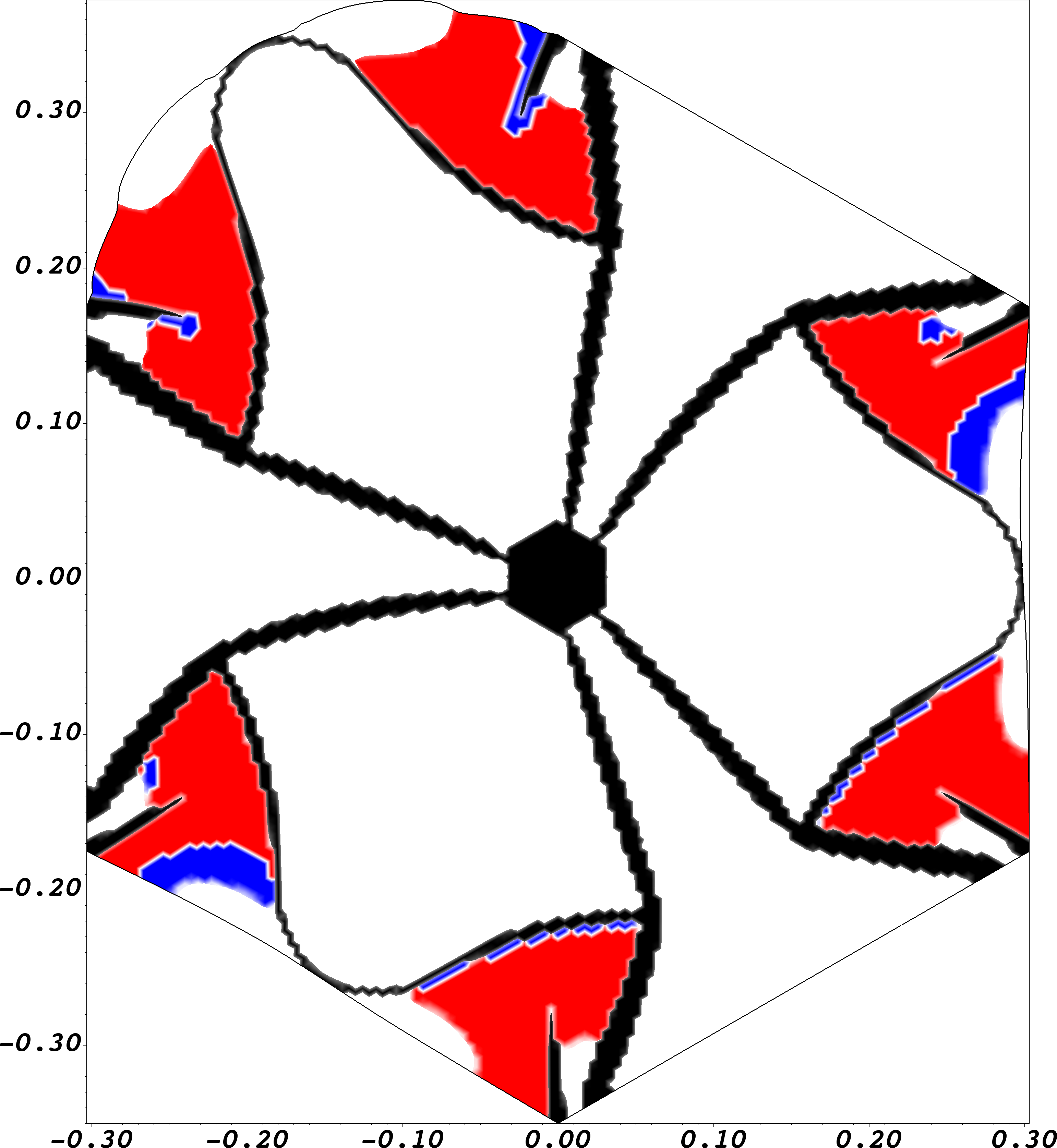}
    \includegraphics[width=0.3\textwidth]{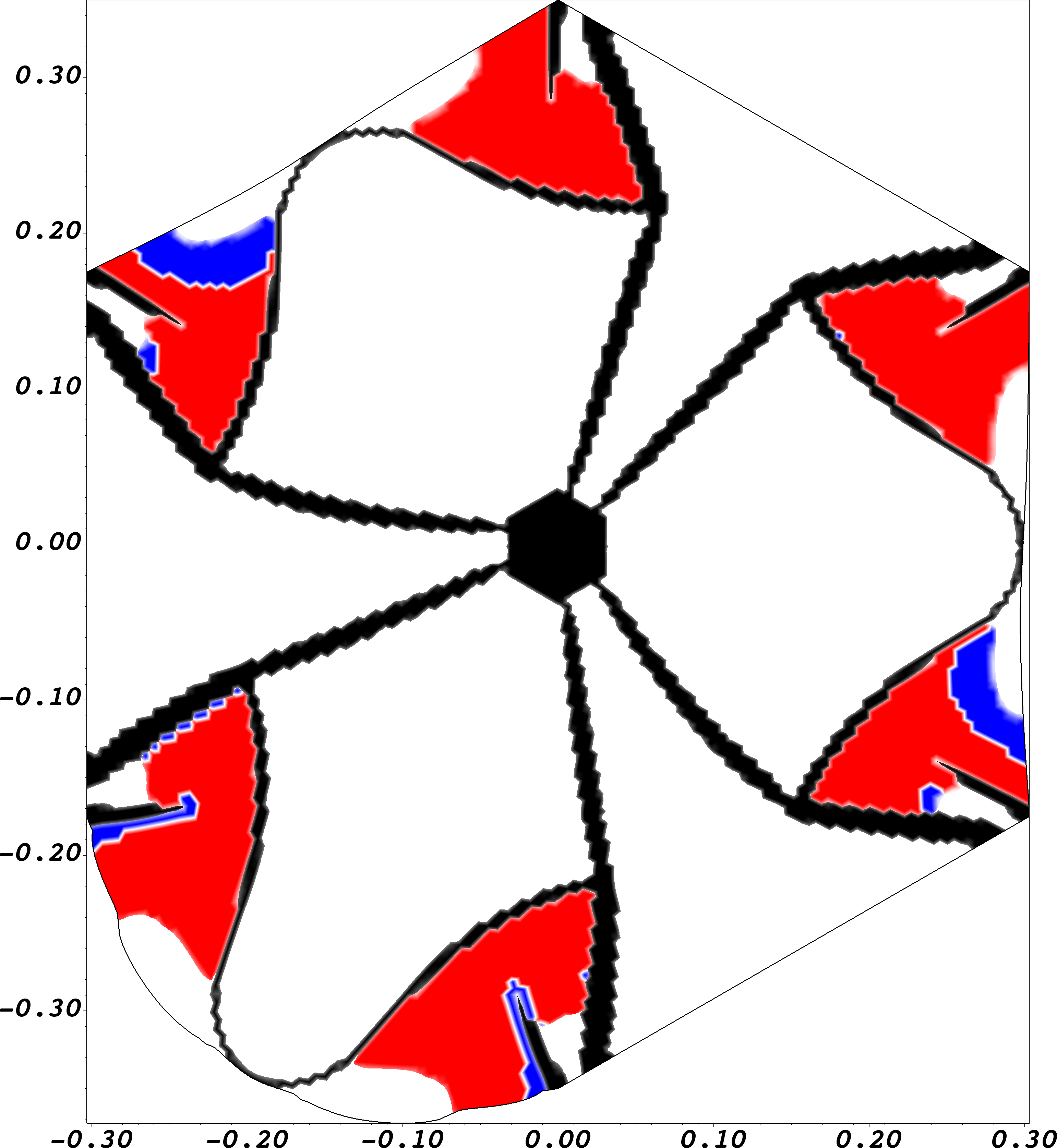}
    \caption{Optimal design with three prescribed displacement with $E_2/E_3 = 10$ and a strong penalty on the stiff material.}
     \label{fig:Hexagon-Ratio01-08}
\end{figure}

When reducing the elastic contrast between materials ($E_2 = \num{5e-2}$ and $E_3 = \num{1e-2}$)  and slightly decreasing the cost of the stiff material ($\nu_2 = 0.3$ and $\nu_3 = 0.03$), we obtain a simpler geometry with larger areas occupied by responsive material.
Again, deformation towards the target displacement is achieved by flexing elongated stiff structures (see Figure~\ref{fig:Hexagon-Ratio05-04}).

\begin{figure}[h!]
    \centering
    \includegraphics[width=0.3\textwidth]{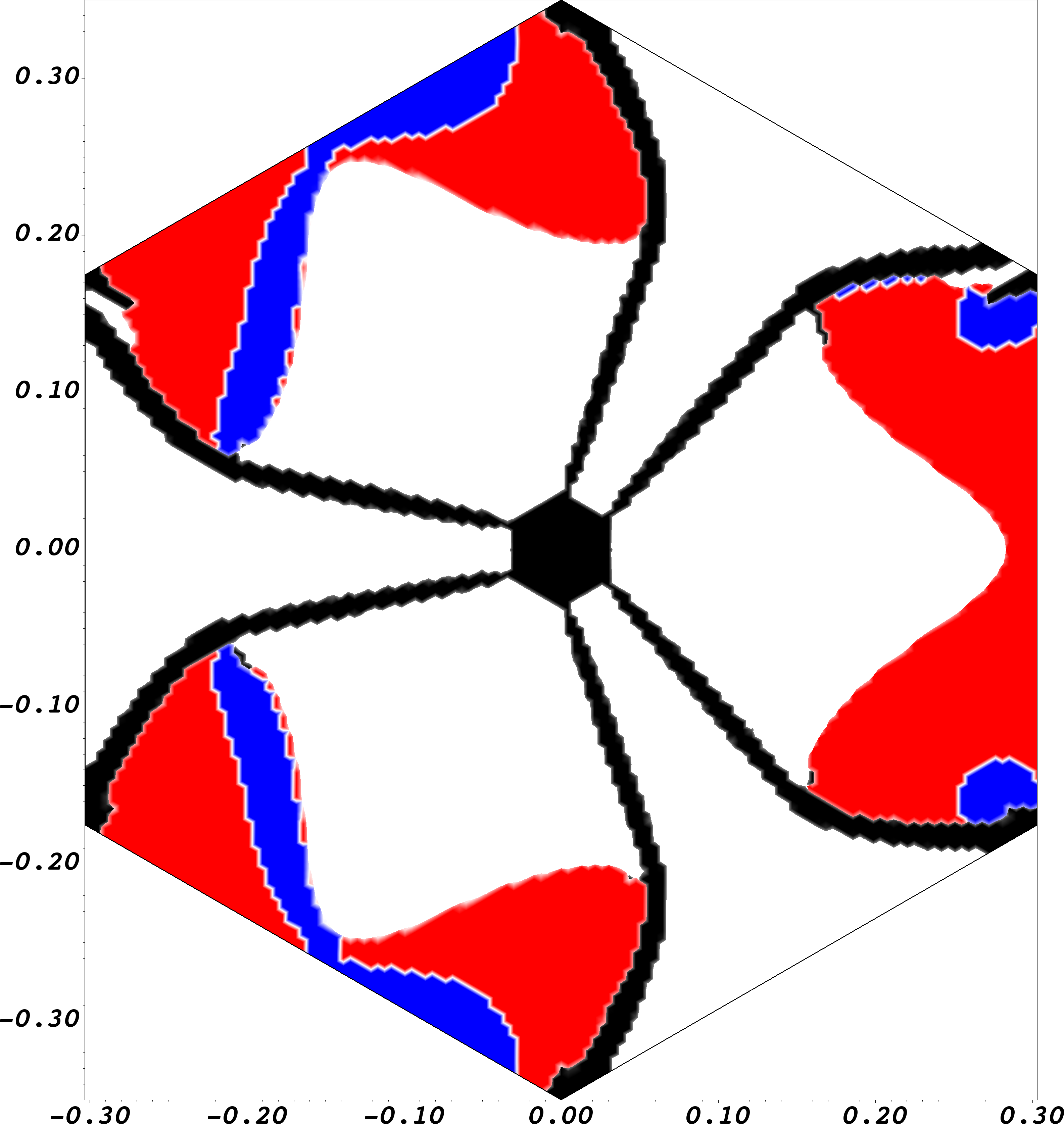}
    \includegraphics[width=0.3\textwidth]{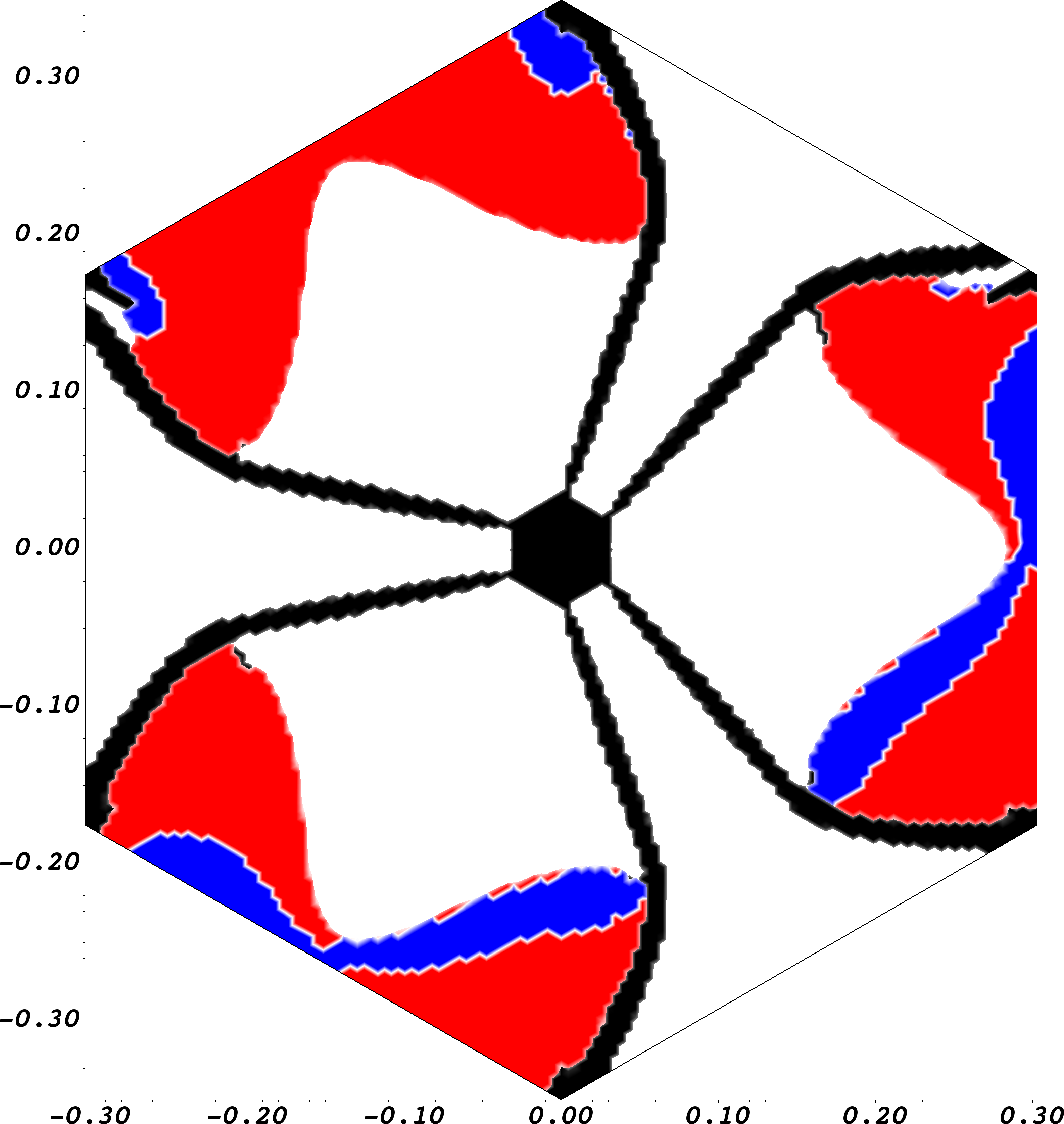}
    \includegraphics[width=0.3\textwidth]{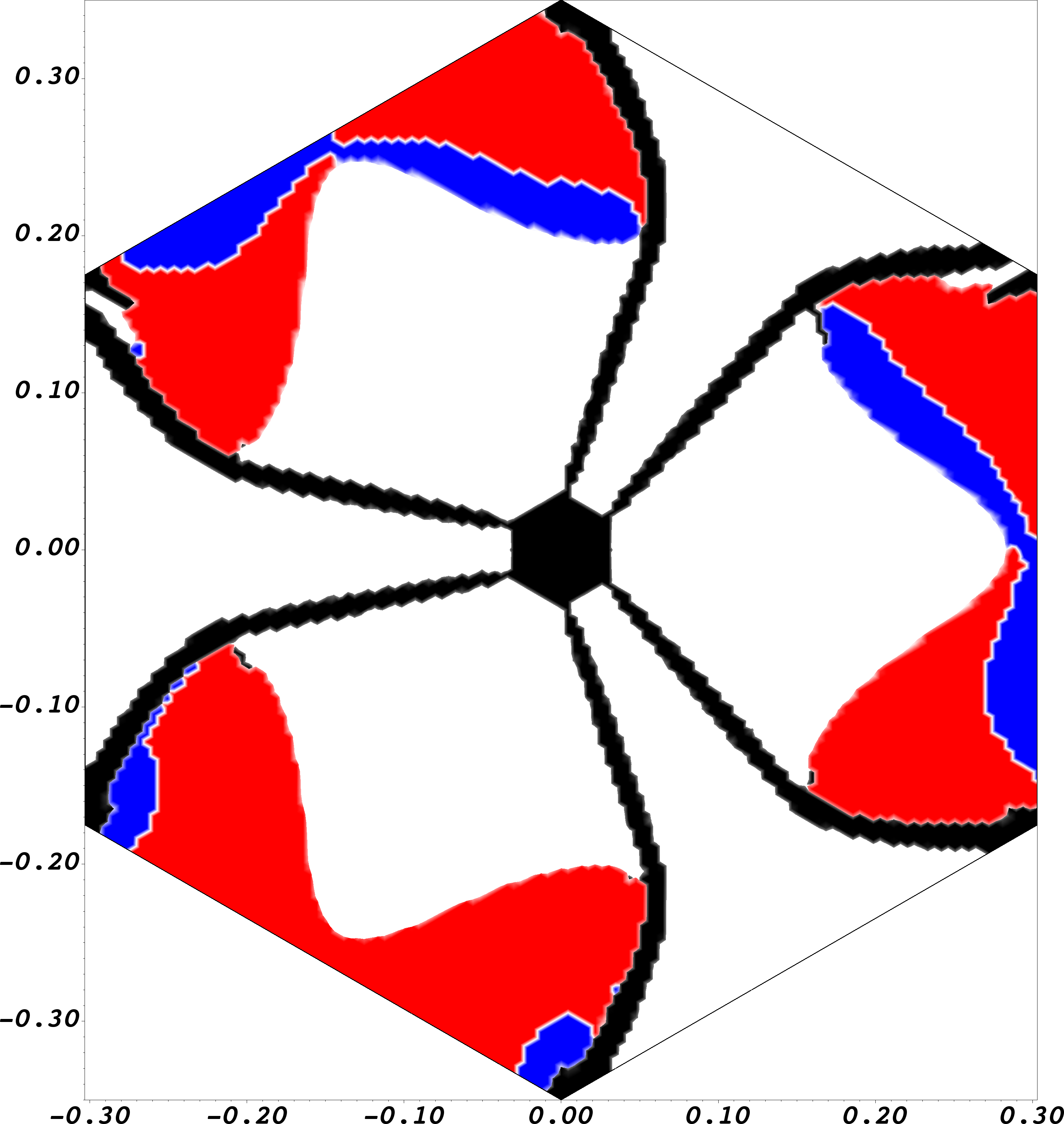}
    \includegraphics[width=0.3\textwidth]{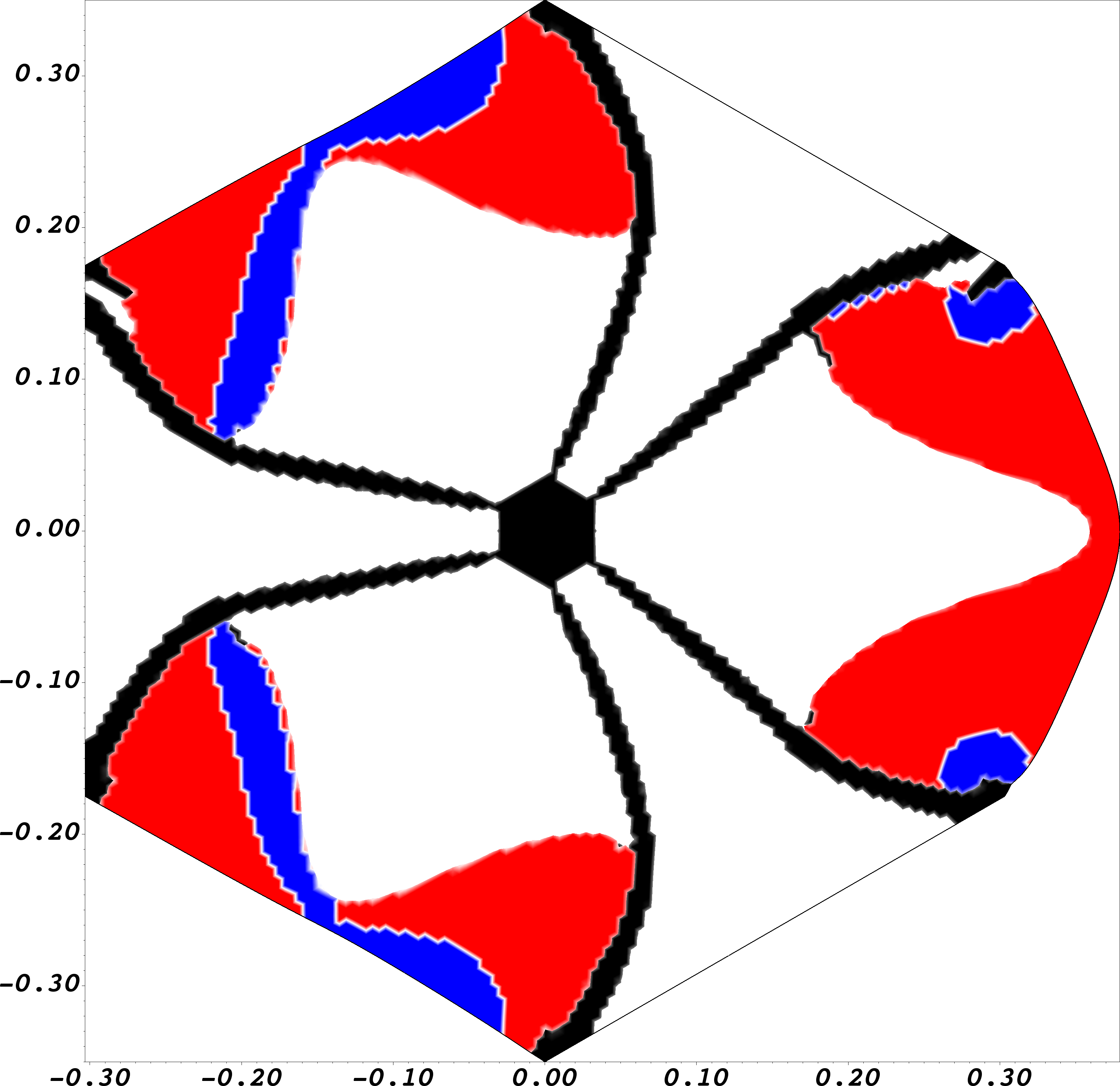}
    \includegraphics[width=0.3\textwidth]{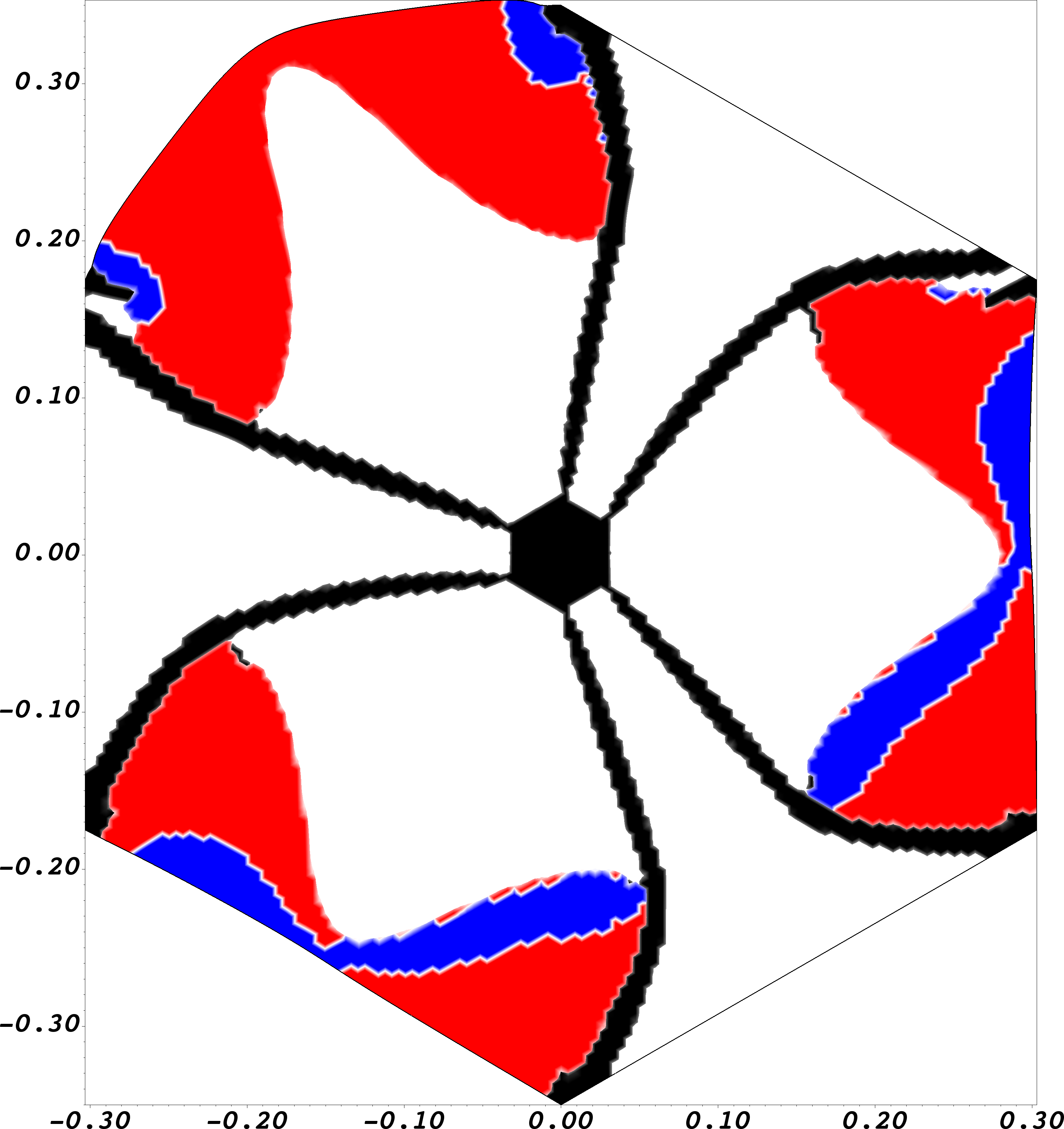}
    \includegraphics[width=0.3\textwidth]{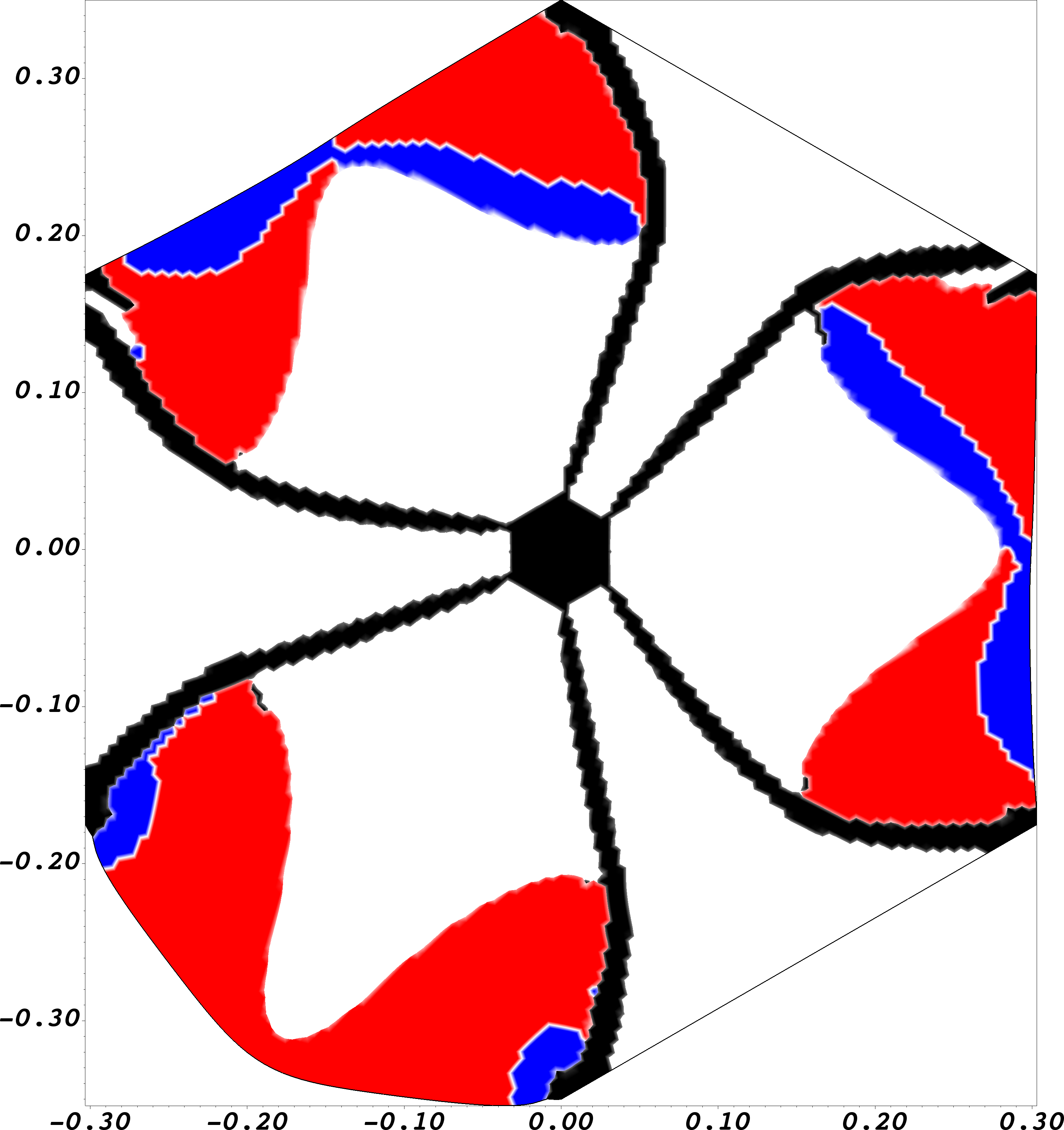}
    \caption{Optimal design with three prescribed displacement with $E_2/E_3 = 5$ and a strong penalty on the stiff material.}
    \label{fig:Hexagon-Ratio05-04}
\end{figure}

Finally, when using a stiff responsive material and weaker non-responsive materials ($E_2 = \num{1e-2}$ and $E_3 = \num{5e-2}$), all other parameters remaining the same,
we obtain designs consisting solely of the responsive materials.  
This is expected since in this situation, the non-responsive material is both expensive, less stiff, and incapable of activation.
\begin{figure}[h!]
    \centering
    \includegraphics[width=0.3\textwidth]{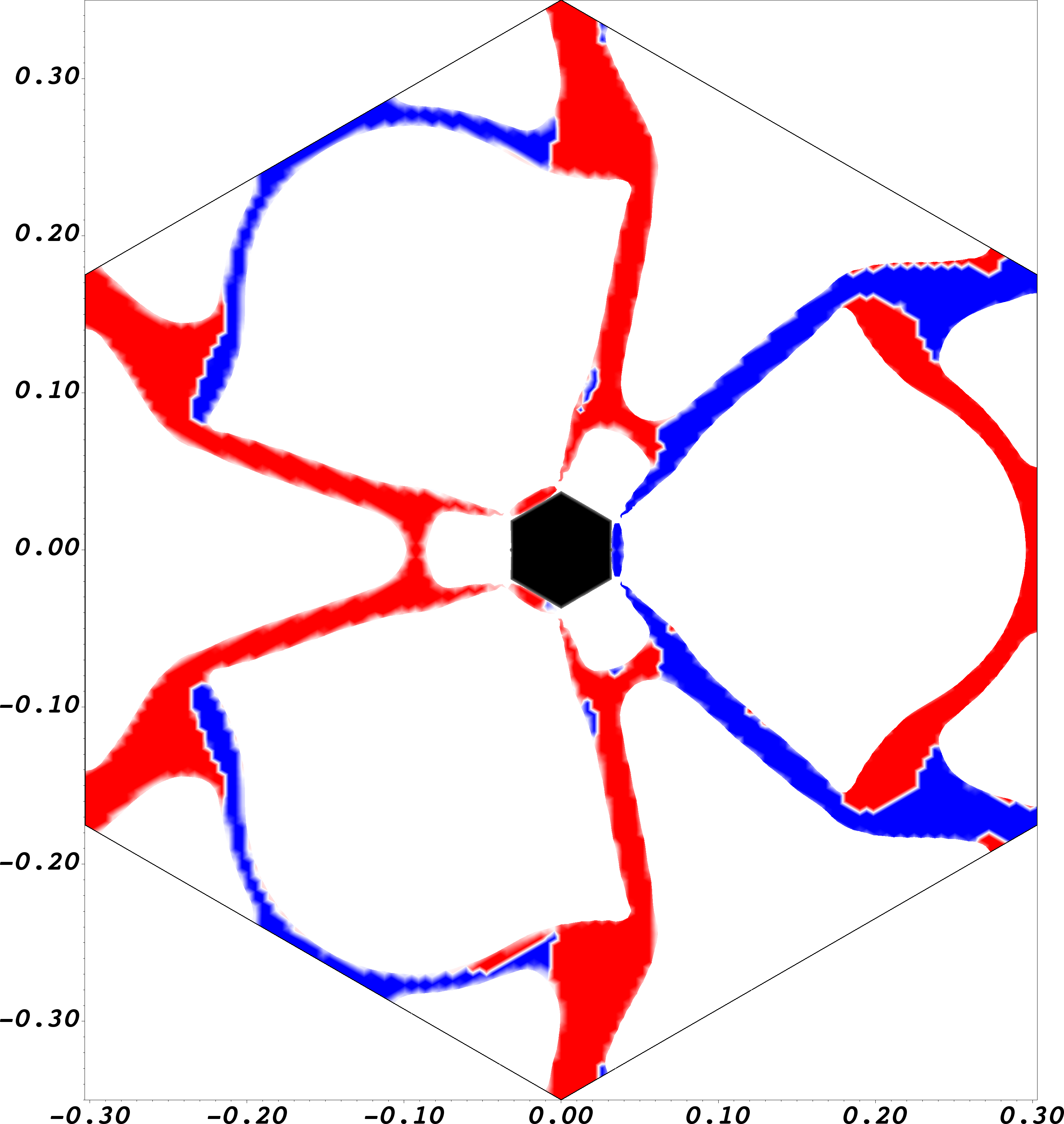}
    \includegraphics[width=0.3\textwidth]{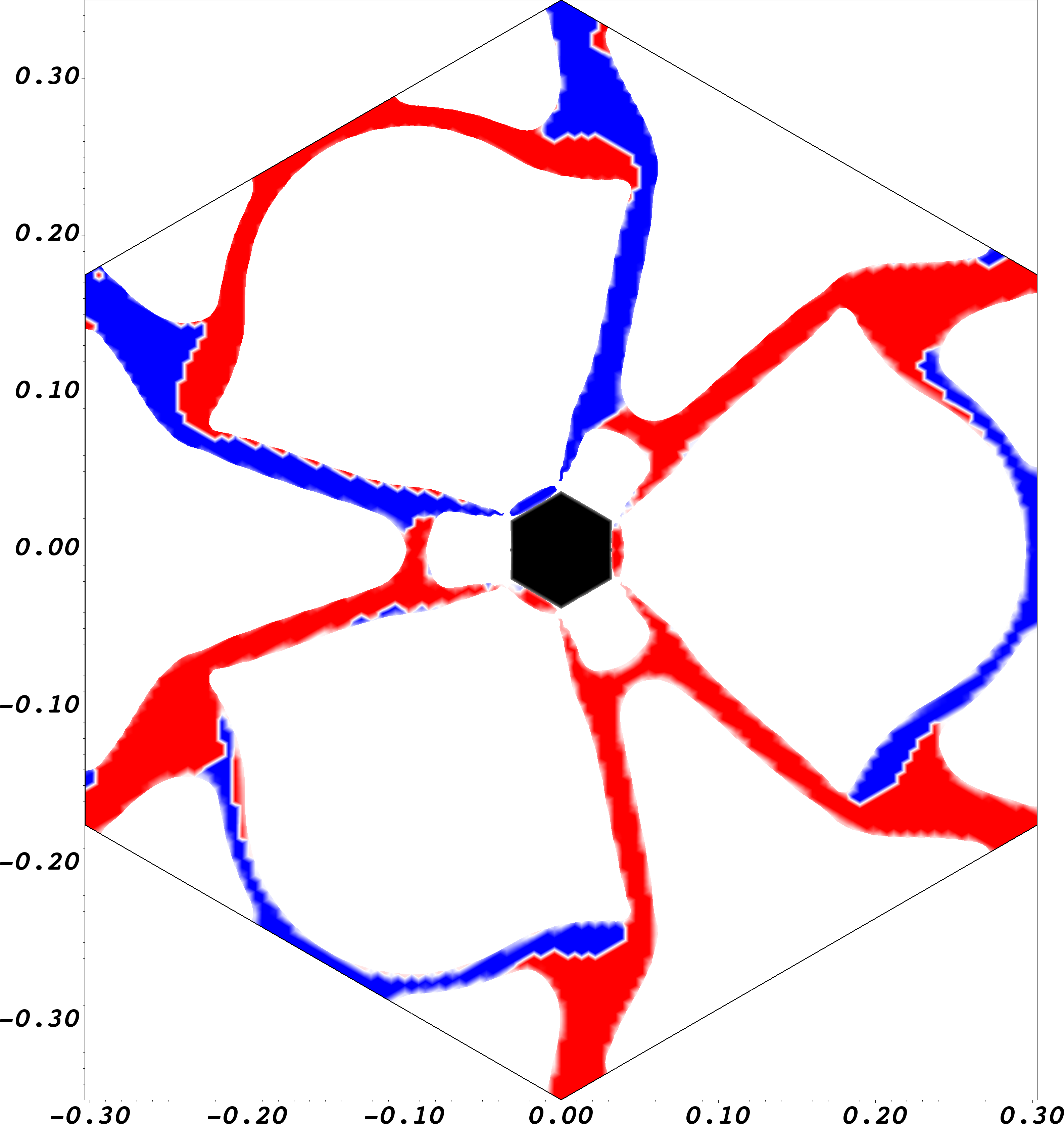}
    \includegraphics[width=0.3\textwidth]{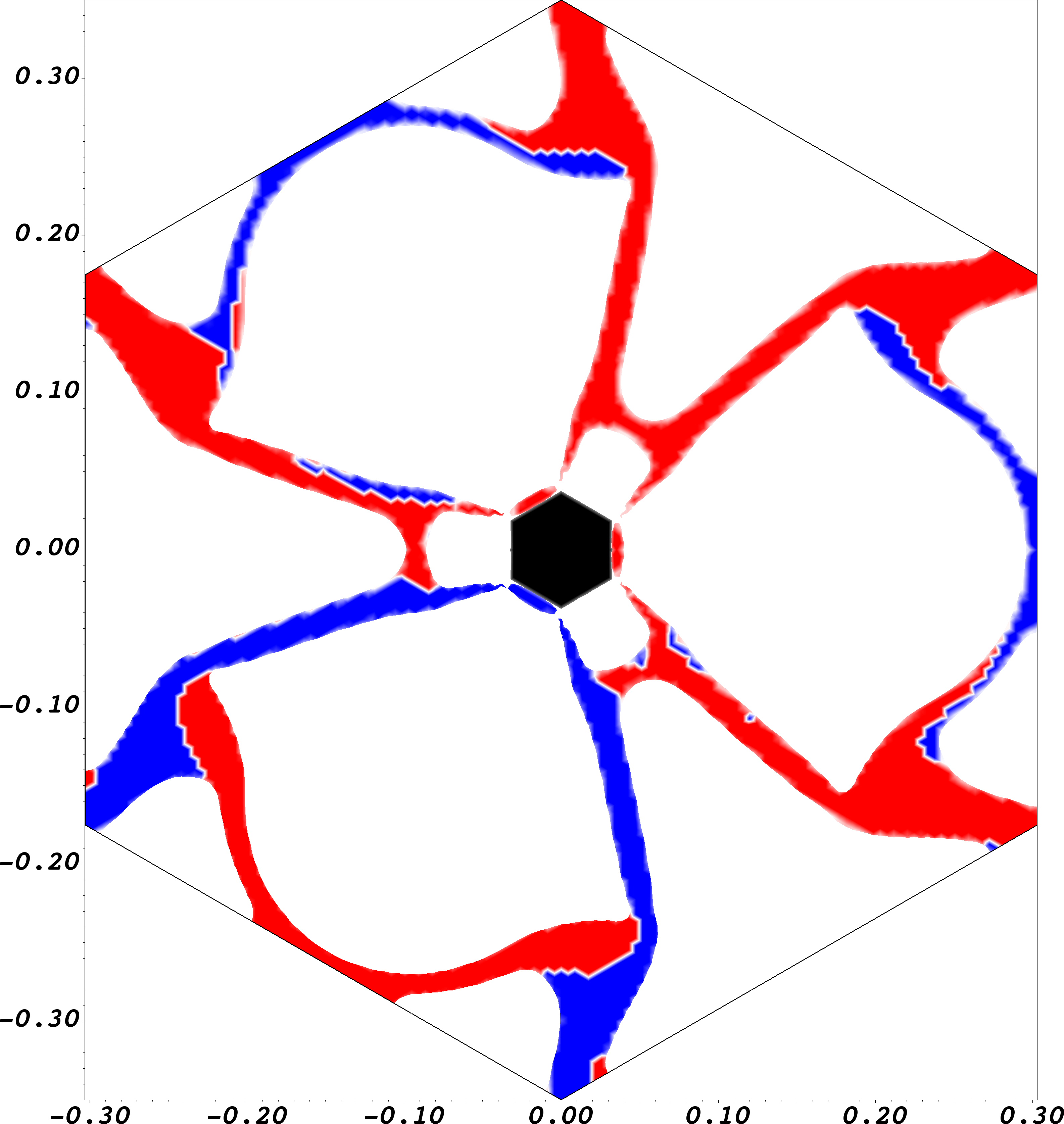}
    \includegraphics[width=0.3\textwidth]{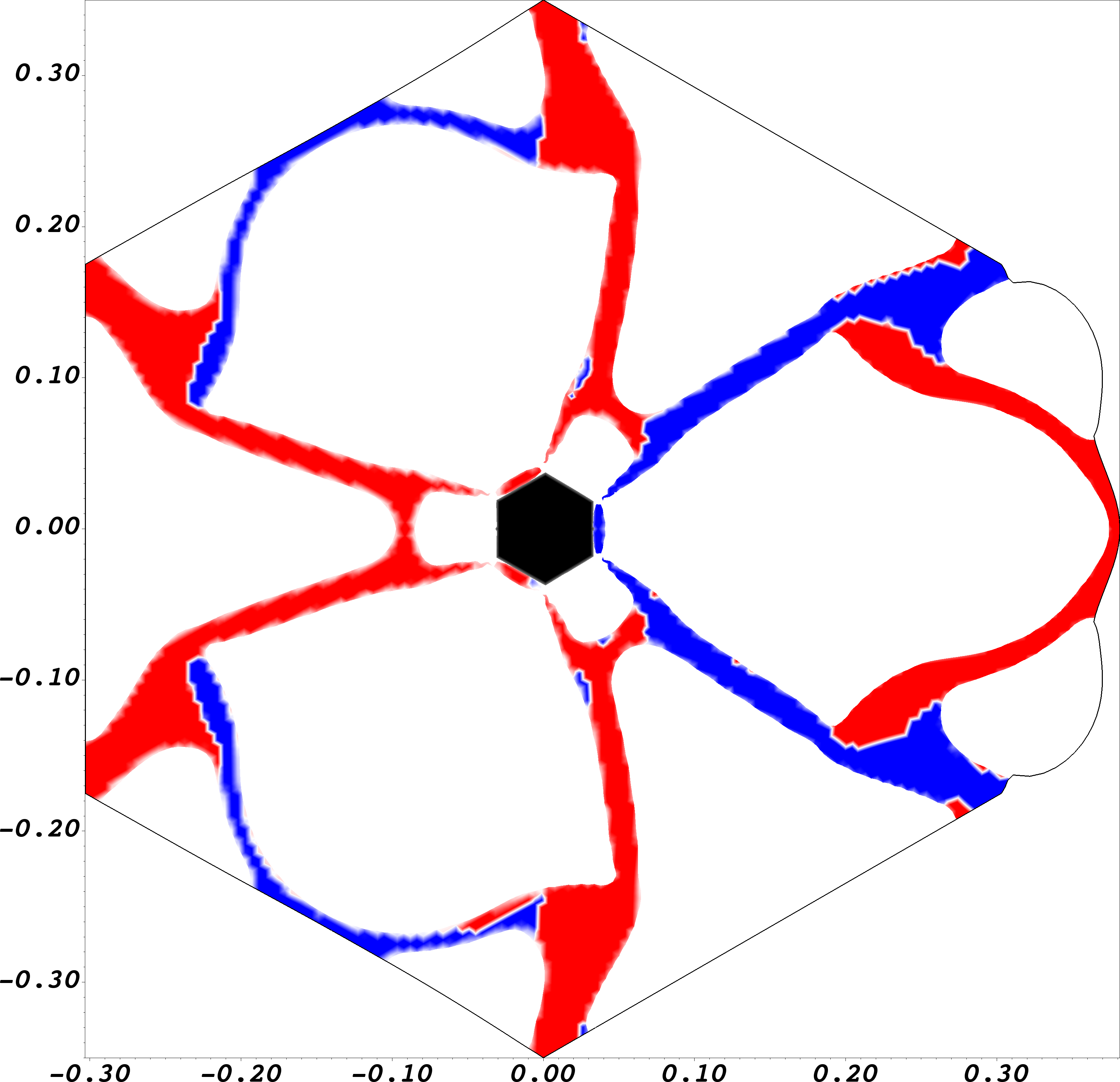}
    \includegraphics[width=0.3\textwidth]{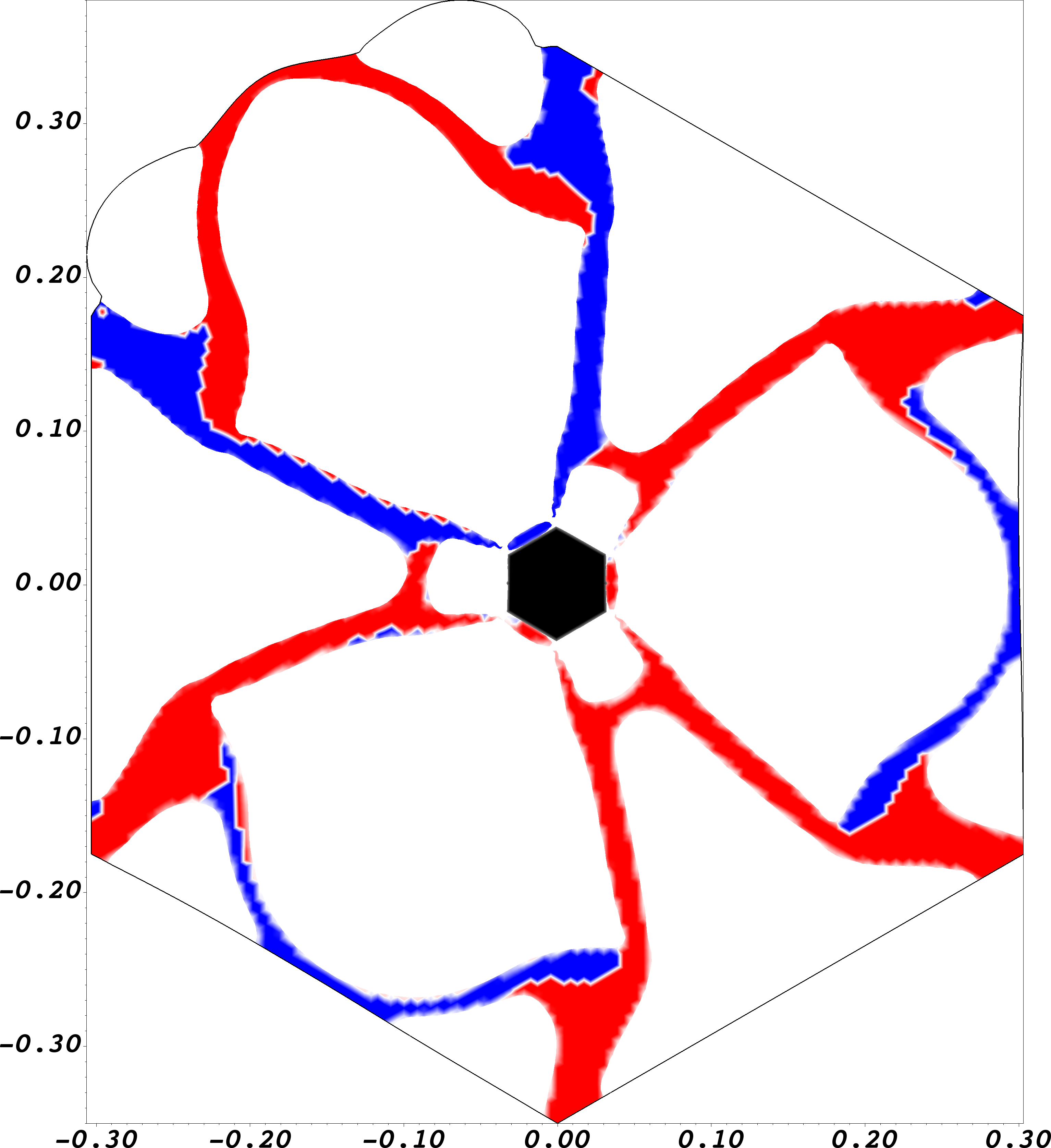}
    \includegraphics[width=0.3\textwidth]{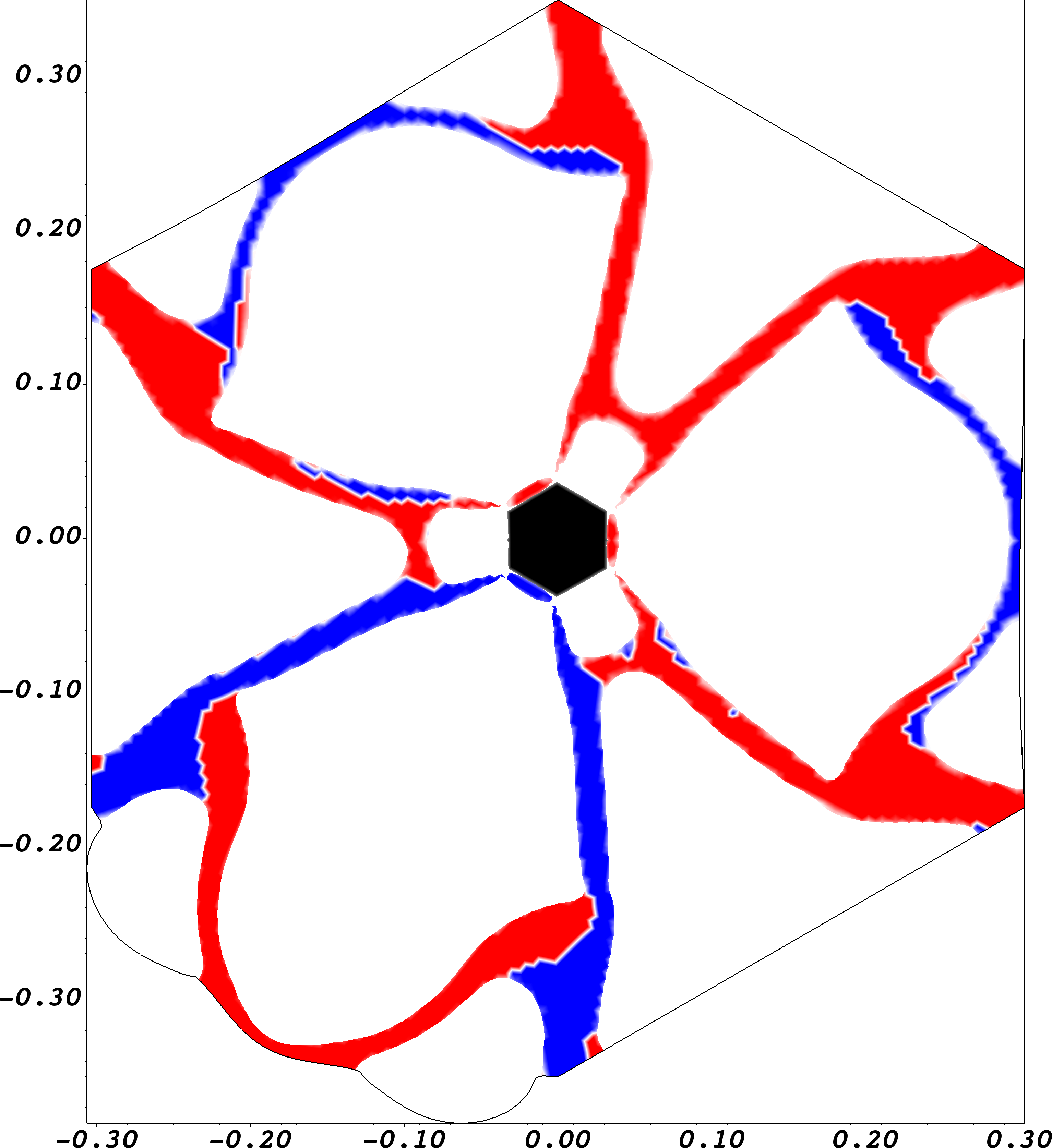}
    \caption{Optimal design with three prescribed displacement with $E_2/E_3 = 0.2$ and a strong penalty on the stiff material.}
    \label{fig:Hexagon-Ratio05-18}
\end{figure}

\section{Conclusion}\label{conclusion}
We have investigated the systematic design of responsive structures with a prescribed target displacement and perimeter constraint.
We proved existence of solutions to a phase-field regularization and their convergence to that of the ``sharp interface'' problem.
We proposed a numerical scheme based on an iterative gradient-based solver with respect to one set of design variable (the materials' density) where at each step a full minimization of the objective function with respect to the second set of design variables (the stimuli) is performed.
Our approach is illustrated by series of numerical examples demonstrating its ability to identify complex geometries and produced well-delineated ``black and white'' design, owing to the existence of ``classical'' solutions of the phase-field regularization.

While this work focused on a simple actuation mechanism in the form of an isotropic inelastic strain, accounting for to more complex stimuli and responsive materials including piezo-electrics, dielectric elastomers and liquid crystal elastomers or shape memory alloys should be a relatively simple extension of this work.
Whereas the objective function used here focus solely on kinematics of a structure, the framework could be extended to optimization of mechanical advantage as in~\cite{Akerson_2022,Sigmund-1997}.

A natural extension of the work presented in this article is to consider a stimulus derived from a physical process.
In this setting, the stimulus itself would become a state variable, derived from solving  a PDE governed by some additional design variable.
For instance, one could consider body or boundary heat flux as design variables, and the temperature field as a state variable.

\section*{Statements \& Declarations}
\paragraph{Funding} 
Support for this work was provided in part by the grant 
``Collaborative Research: Optimal Design of Responsive Materials and Structures'' from the U.S. National Science Foundation (DMS:2009303 at Louisiana State University and McMaster University and DMS:2009289 at Caltech).
BB also acknowledges the support of the Natural Sciences and Engineering Research Council of Canada (NSERC), RGPIN-2022-04536 and the Canada Research Chair program.

\paragraph{Competing Interests}
The authors have no relevant financial or non-financial interests to disclose.

\paragraph{Authors contribution}
All authors contributed to the study and manuscript preparation. All authors read and approved the final manuscript.

\paragraph{Code and data availability}
The code used in this article is available under an open source license at \url{https://doi.org/10.5281/zenodo.12746321}. Data files and computation results for Figure 1-8 are hosted by the Federated Research Data Repository (FRDR) of the Digital Research Alliance of Canada at \url{https://doi.org/10.20383/103.01009}.


\begin{thebibliography}{35}
\ifx \bisbn   \undefined \def \bisbn  #1{ISBN #1}\fi
\ifx \binits  \undefined \def \binits#1{#1}\fi
\ifx \bauthor  \undefined \def \bauthor#1{#1}\fi
\ifx \batitle  \undefined \def \batitle#1{#1}\fi
\ifx \bjtitle  \undefined \def \bjtitle#1{#1}\fi
\ifx \bvolume  \undefined \def \bvolume#1{\textbf{#1}}\fi
\ifx \byear  \undefined \def \byear#1{#1}\fi
\ifx \bissue  \undefined \def \bissue#1{#1}\fi
\ifx \bfpage  \undefined \def \bfpage#1{#1}\fi
\ifx \blpage  \undefined \def \blpage #1{#1}\fi
\ifx \burl  \undefined \def \burl#1{\textsf{#1}}\fi
\ifx \doiurl  \undefined \def \doiurl#1{\url{https://doi.org/#1}}\fi
\ifx \betal  \undefined \def \betal{\textit{et al.}}\fi
\ifx \binstitute  \undefined \def \binstitute#1{#1}\fi
\ifx \binstitutionaled  \undefined \def \binstitutionaled#1{#1}\fi
\ifx \bctitle  \undefined \def \bctitle#1{#1}\fi
\ifx \beditor  \undefined \def \beditor#1{#1}\fi
\ifx \bpublisher  \undefined \def \bpublisher#1{#1}\fi
\ifx \bbtitle  \undefined \def \bbtitle#1{#1}\fi
\ifx \bedition  \undefined \def \bedition#1{#1}\fi
\ifx \bseriesno  \undefined \def \bseriesno#1{#1}\fi
\ifx \blocation  \undefined \def \blocation#1{#1}\fi
\ifx \bsertitle  \undefined \def \bsertitle#1{#1}\fi
\ifx \bsnm \undefined \def \bsnm#1{#1}\fi
\ifx \bsuffix \undefined \def \bsuffix#1{#1}\fi
\ifx \bparticle \undefined \def \bparticle#1{#1}\fi
\ifx \barticle \undefined \def \barticle#1{#1}\fi
\bibcommenthead
\ifx \bconfdate \undefined \def \bconfdate #1{#1}\fi
\ifx \botherref \undefined \def \botherref #1{#1}\fi
\ifx \url \undefined \def \url#1{\textsf{#1}}\fi
\ifx \bchapter \undefined \def \bchapter#1{#1}\fi
\ifx \bbook \undefined \def \bbook#1{#1}\fi
\ifx \bcomment \undefined \def \bcomment#1{#1}\fi
\ifx \oauthor \undefined \def \oauthor#1{#1}\fi
\ifx \citeauthoryear \undefined \def \citeauthoryear#1{#1}\fi
\ifx \endbibitem  \undefined \def \endbibitem {}\fi
\ifx \bconflocation  \undefined \def \bconflocation#1{#1}\fi
\ifx \arxivurl  \undefined \def \arxivurl#1{\textsf{#1}}\fi
\csname PreBibitemsHook\endcsname

\bibitem[\protect\citeauthoryear{Schaedler
  et~al.}{2011}]{Schaedler-Jacobsen-EtAl-2011b}
\begin{barticle}
\bauthor{\bsnm{Schaedler}, \binits{T.A.}},
\bauthor{\bsnm{Jacobsen}, \binits{A.J.}},
\bauthor{\bsnm{Torrents}, \binits{A.}},
\bauthor{\bsnm{Sorensen}, \binits{A.E.}},
\bauthor{\bsnm{Lian}, \binits{J.}},
\bauthor{\bsnm{Greer}, \binits{J.R.}},
\bauthor{\bsnm{Valdevit}, \binits{L.}},
\bauthor{\bsnm{Carter}, \binits{W.B.}}:
\batitle{Ultralight metallic microlattices}.
\bjtitle{Science}
\bvolume{334}(\bissue{6058}),
\bfpage{962}--\blpage{965}
(\byear{2011})
\doiurl{10.1126/science.1211649}
\end{barticle}
\endbibitem

\bibitem[\protect\citeauthoryear{Xu et~al.}{2019}]{Xu-Zhang-EtAl-2019a}
\begin{botherref}
\oauthor{\bsnm{Xu}, \binits{T.}},
\oauthor{\bsnm{Zhang}, \binits{J.}},
\oauthor{\bsnm{Salehizadeh}, \binits{M.}},
\oauthor{\bsnm{Onaizah}, \binits{O.}},
\oauthor{\bsnm{Diller}, \binits{E.}}:
Millimeter-scale flexible robots with programmable three-dimensional
  magnetization and motions.
Science Robotics
\textbf{4}(29)
(2019)
\doiurl{10.1126/scirobotics.aav4494}
\end{botherref}
\endbibitem

\bibitem[\protect\citeauthoryear{Sigmund}{1997}]{Sigmund-1997}
\begin{barticle}
\bauthor{\bsnm{Sigmund}, \binits{O.}}:
\batitle{On the design of compliant mechanisms using topology optimization}.
\bjtitle{Mechanics of Structures and Machines}
\bvolume{25}(\bissue{4}),
\bfpage{495}--\blpage{526}
(\byear{1997})
\doiurl{10.1080/08905459708945415}
\end{barticle}
\endbibitem

\bibitem[\protect\citeauthoryear{Larsen
  et~al.}{1997}]{Larsen-Sigmund-EtAl-1997}
\begin{barticle}
\bauthor{\bsnm{Larsen}, \binits{U.D.}},
\bauthor{\bsnm{Sigmund}, \binits{O.}},
\bauthor{\bsnm{Bouwstra}, \binits{S.}}:
\batitle{Design and fabrication of micromecanisms and structures with negative
  {P}oisson's ratio}.
\bjtitle{Journal of Microelectromechanical Systems}
\bvolume{6}(\bissue{2}),
\bfpage{99}--\blpage{106}
(\byear{1997})
\doiurl{10.1109/84.585787}
\end{barticle}
\endbibitem

\bibitem[\protect\citeauthoryear{Jonsmann
  et~al.}{1999}]{Jonsmann-Sigmund-EtAl-1999}
\begin{barticle}
\bauthor{\bsnm{Jonsmann}, \binits{J.}},
\bauthor{\bsnm{Sigmund}, \binits{O.}},
\bauthor{\bsnm{Bouwstra}, \binits{S.}}:
\batitle{Compliant thermal microactuators}.
\bjtitle{Sensors and Actuators}
\bvolume{7},
\bfpage{463}--\blpage{469}
(\byear{1999})
\doiurl{10.1016/S0924-4247(99)00011-4}
\end{barticle}
\endbibitem

\bibitem[\protect\citeauthoryear{Bends{\o}e and
  Sigmund}{2003}]{BendsoeSigmund2003}
\begin{bbook}
\bauthor{\bsnm{Bends{\o}e}, \binits{M.}},
\bauthor{\bsnm{Sigmund}, \binits{O.}}:
\bbtitle{Topology Optimization: Theory, Methods and Applications},
\bedition{2nd} edn.
\bpublisher{Springer},
\blocation{New York}
(\byear{2003})
\end{bbook}
\endbibitem

\bibitem[\protect\citeauthoryear{Allaire}{2002}]{Allaire2002}
\begin{bbook}
\bauthor{\bsnm{Allaire}, \binits{G.}}:
\bbtitle{Shape Optimization by the Homogenization Method},
p. \bfpage{456}.
\bpublisher{Springer},
\blocation{New York}
(\byear{2002})
\end{bbook}
\endbibitem

\bibitem[\protect\citeauthoryear{Andrej}{2000}]{Cherkaev2000}
\begin{bbook}
\bauthor{\bsnm{Andrej}, \binits{C.}}:
\bbtitle{Variational Methods for Structural Optimization}.
\bsertitle{Applied Mathematical Sciences},
vol. \bseriesno{140}.
\bpublisher{Springer},
\blocation{Berlin}
(\byear{2000}).
\doiurl{10.1007/978-1-4612-1188-4}
\end{bbook}
\endbibitem

\bibitem[\protect\citeauthoryear{Allaire et~al.}{1997}]{ABFJ1997}
\begin{barticle}
\bauthor{\bsnm{Allaire}, \binits{G.}},
\bauthor{\bsnm{Bonnetier}, \binits{E.}},
\bauthor{\bsnm{Francfort}, \binits{G.}},
\bauthor{},
\bauthor{\bsnm{Jouve}, \binits{F.}}:
\batitle{Shape optimization by the homogenization method.}
\bjtitle{Numerische Mathematik}
\bvolume{76}(\bissue{1}),
\bfpage{27}--\blpage{68}
(\byear{1997})
\doiurl{10.1007/s002110050253}
\end{barticle}
\endbibitem

\bibitem[\protect\citeauthoryear{Bends{\o}e and
  Sigmund}{1999}]{Bendsoe-Sigmund-1999}
\begin{barticle}
\bauthor{\bsnm{Bends{\o}e}, \binits{M.P.}},
\bauthor{\bsnm{Sigmund}, \binits{O.}}:
\batitle{Material interpolation schemes in topology optimization}.
\bjtitle{Archive of Applied Mechanics}
\bvolume{69},
\bfpage{635}--\blpage{654}
(\byear{1999})
\doiurl{10.1007/s004190050248}
\end{barticle}
\endbibitem

\bibitem[\protect\citeauthoryear{Bourdin}{2001}]{Bourdin-2001}
\begin{barticle}
\bauthor{\bsnm{Bourdin}, \binits{B.}}:
\batitle{Filters in topology optimization}.
\bjtitle{International Journal for Numerical Methods in Engineering}
\bvolume{50},
\bfpage{2143}--\blpage{2158}
(\byear{2001})
\doiurl{10.1002/nme.116}
\end{barticle}
\endbibitem

\bibitem[\protect\citeauthoryear{Allaire et~al.}{2004}]{Allaire_2004}
\begin{barticle}
\bauthor{\bsnm{Allaire}, \binits{G.}},
\bauthor{\bsnm{Jouve}, \binits{F.}},
\bauthor{\bsnm{Toader}, \binits{A.-M.}}:
\batitle{Structural optimization using sensitivity analysis and a level-set
  method}.
\bjtitle{Journal of Computational Physics}
\bvolume{194}(\bissue{1}),
\bfpage{363}--\blpage{393}
(\byear{2004})
\doiurl{10.1016/j.jcp.2003.09.032}
\end{barticle}
\endbibitem

\bibitem[\protect\citeauthoryear{Allaire et~al.}{2013}]{Allaire_2013}
\begin{barticle}
\bauthor{\bsnm{Allaire}, \binits{G.}},
\bauthor{\bsnm{Dapogny}, \binits{C.}},
\bauthor{\bsnm{Frey}, \binits{P.}}:
\batitle{A mesh evolution algorithm based on the level set method for geometry
  and topology optimization}.
\bjtitle{Structural and Multidisciplinary Optimization}
\bvolume{48}(\bissue{4}),
\bfpage{711}--\blpage{715}
(\byear{2013})
\doiurl{10.1007/s00158-013-0929-2}
\end{barticle}
\endbibitem

\bibitem[\protect\citeauthoryear{Ambrosio and
  Buttazzo}{1993}]{AmbrosioButtazzo1993}
\begin{barticle}
\bauthor{\bsnm{Ambrosio}, \binits{L.}},
\bauthor{\bsnm{Buttazzo}, \binits{G.}}:
\batitle{An optimal design problem with perimeter penalization.}
\bjtitle{Calculus of Variations and Partial Differential Equations}
\bvolume{1}(\bissue{1}),
\bfpage{55}--\blpage{59}
(\byear{1993})
\doiurl{10.1016/j.jcp.2003.09.032}
\end{barticle}
\endbibitem

\bibitem[\protect\citeauthoryear{Haber et~al.}{1996}]{Haber-EtAl-1996}
\begin{barticle}
\bauthor{\bsnm{Haber}, \binits{R.B.}},
\bauthor{\bsnm{Jog}, \binits{C.S.}},
\bauthor{\bsnm{Bends{\o}e}, \binits{M.P.}}:
\batitle{A new approach to variable-topology shape design using a constraint on
  the perimeter}.
\bjtitle{Structural and Multidisciplinary Optimization}
\bvolume{11},
\bfpage{1}--\blpage{12}
(\byear{1996})
\doiurl{10.1007/BF01279647}
\end{barticle}
\endbibitem

\bibitem[\protect\citeauthoryear{Petersson}{1999}]{Petersson-1999b}
\begin{barticle}
\bauthor{\bsnm{Petersson}, \binits{J.}}:
\batitle{Some convergence results in perimeter-controlled topology
  optimization}.
\bjtitle{Computer Methods in Applied Mechanics and Engineering}
\bvolume{171},
\bfpage{123}--\blpage{140}
(\byear{1999})
\doiurl{10.1016/S0045-7825(98)00248-5}
\end{barticle}
\endbibitem

\bibitem[\protect\citeauthoryear{Bourdin and
  Chambolle}{2003}]{BourdinChambolle2003}
\begin{barticle}
\bauthor{\bsnm{Bourdin}, \binits{B.}},
\bauthor{\bsnm{Chambolle}, \binits{A.}}:
\batitle{Design-dependent loads in topology optimization}.
\bjtitle{{ESAIM}: Control, Optimisation and Calculus of Variations}
\bvolume{9},
\bfpage{19}--\blpage{48}
(\byear{2003})
\doiurl{10.1051/cocv:2002070}
\end{barticle}
\endbibitem

\bibitem[\protect\citeauthoryear{Bourdin and
  Chambolle}{2006}]{BourdinChambolle2006}
\begin{bchapter}
\bauthor{\bsnm{Bourdin}, \binits{B.}},
\bauthor{\bsnm{Chambolle}, \binits{A.}}:
\bctitle{The phase-field method in optimal design}.
In: \beditor{\bsnm{M.P.~Bends{\o}e}, \binits{O.S.}},
\beditor{\bsnm{Olhoff}, \binits{N.}} (eds.)
\bbtitle{IUTAM Symposium on Topological Design Optimization of Structures,
  Machines and Materials}.
\bsertitle{Solid Mechanics and its Applications},
pp. \bfpage{207}--\blpage{216}.
\bpublisher{Springer},
\blocation{Dordrecht}
(\byear{2006}).
\doiurl{10.1007/1-4020-4752-5}
\end{bchapter}
\endbibitem

\bibitem[\protect\citeauthoryear{Tran and Bourdin}{2022}]{Tran-Bourdin-2022c}
\begin{botherref}
\oauthor{\bsnm{Tran}, \binits{N.V.}},
\oauthor{\bsnm{Bourdin}, \binits{B.}}:
Minimum compliance with obstacle constraints: an active set approach.
Structural and Multidisciplinary Optimization
\textbf{65}(4)
(2022)
\doiurl{10.1007/s00158-022-03199-9}
\end{botherref}
\endbibitem

\bibitem[\protect\citeauthoryear{Ambrosio et~al.}{2000}]{Ambrosio-EtAl-2000}
\begin{bbook}
\bauthor{\bsnm{Ambrosio}, \binits{L.}},
\bauthor{\bsnm{Fusco}, \binits{N.}},
\bauthor{\bsnm{Pallara}, \binits{D.}}:
\bbtitle{Functions of Bounded Variation and Free Discontinuity Problems}.
\bpublisher{Oxford University Press},
\blocation{New York}
(\byear{2000}).
\doiurl{10.1093/oso/9780198502456.001.0001}
\end{bbook}
\endbibitem

\bibitem[\protect\citeauthoryear{Wang and Zhou}{2004}]{Wang-Zhou-2004a}
\begin{barticle}
\bauthor{\bsnm{Wang}, \binits{M.Y.}},
\bauthor{\bsnm{Zhou}, \binits{S.}}:
\batitle{Synthesis of shape and topology of multi-material structures with a
  phase-field method}.
\bjtitle{Journal of Computer-Aided Materials Design}
\bvolume{11}(\bissue{2-3}),
\bfpage{117}--\blpage{138}
(\byear{2004})
\doiurl{10.1007/s10820-005-3169-y}
\end{barticle}
\endbibitem

\bibitem[\protect\citeauthoryear{Zhou and Wang}{2007}]{Zhou-Wang-2007}
\begin{barticle}
\bauthor{\bsnm{Zhou}, \binits{S.W.}},
\bauthor{\bsnm{Wang}, \binits{M.Y.}}:
\batitle{Multimaterial structural topology optimization with a generalized
  {C}ahn-{H}illiard model of multiphase transition}.
\bjtitle{Structural and Multidisciplinary Optimization}
\bvolume{33}(\bissue{2}),
\bfpage{89}--\blpage{111}
(\byear{2007})
\doiurl{10.1007/s00158-006-0035-9}
\end{barticle}
\endbibitem

\bibitem[\protect\citeauthoryear{Garcke et~al.}{2021}]{Garcke-Huttl-EtAl-2021a}
\begin{barticle}
\bauthor{\bsnm{Garcke}, \binits{H.}},
\bauthor{\bsnm{H{\"u}ttl}, \binits{P.}},
\bauthor{\bsnm{Knopf}, \binits{P.}}:
\batitle{Shape and topology optimization involving the eigenvalues of an
  elastic structure: A multi-phase-field approach}.
\bjtitle{Advances in Nonlinear Analysis}
\bvolume{11}(\bissue{1}),
\bfpage{159}--\blpage{197}
(\byear{2021})
\doiurl{10.1515/anona-2020-0183}
\end{barticle}
\endbibitem

\bibitem[\protect\citeauthoryear{Garcke et~al.}{2024}]{Garcke-Huttl-EtAl-2024a}
\begin{botherref}
\oauthor{\bsnm{Garcke}, \binits{H.}},
\oauthor{\bsnm{H{\"u}ttl}, \binits{P.}},
\oauthor{\bsnm{Kahle}, \binits{C.}},
\oauthor{\bsnm{Knopf}, \binits{P.}}:
Sharp-interface limit of a multi-phase spectral shape optimization problem for
  elastic structures.
Applied Mathematics \& Optimization
\textbf{89}(1)
(2024)
\doiurl{10.1007/s00245-023-10093-3}
\end{botherref}
\endbibitem

\bibitem[\protect\citeauthoryear{Alberti}{2000}]{Alberti-2000}
\begin{bchapter}
\bauthor{\bsnm{Alberti}, \binits{G.}}:
\bctitle{Variational models for phase transitions, an approach via
  {$\Gamma$}--convergence}.
In: \beditor{\bsnm{Buttazzo}, \binits{G.}},
\beditor{\bsnm{Marino}, \binits{M.}},
\beditor{\bsnm{Murthy}, \binits{M.K.V.}} (eds.)
\bbtitle{Calculus of Variations and Partial Differential Equations},
pp. \bfpage{95}--\blpage{114}.
\bpublisher{Springer},
\blocation{Berlin}
(\byear{2000}).
\doiurl{10.1007/978-3-642-57186-2_3}
\end{bchapter}
\endbibitem

\bibitem[\protect\citeauthoryear{Modica and
  Mortola}{1977}]{Modica-Mortola-1977b}
\begin{barticle}
\bauthor{\bsnm{Modica}, \binits{L.}},
\bauthor{\bsnm{Mortola}, \binits{S.}}:
\batitle{Un esempio di {$\Gamma$}--convergenza}.
\bjtitle{Bollettino dell'Unione Matematica Italiana B (5)}
\bvolume{14}(\bissue{1}),
\bfpage{285}--\blpage{299}
(\byear{1977})
\end{barticle}
\endbibitem

\bibitem[\protect\citeauthoryear{Modica}{1987}]{Modica1987a}
\begin{barticle}
\bauthor{\bsnm{Modica}, \binits{L.}}:
\batitle{The gradient theory of phase transitions and the minimal interface
  criterion}.
\bjtitle{Archive for Rational Mechanics and Analysis}
\bvolume{98}(\bissue{2}),
\bfpage{123}--\blpage{142}
(\byear{1987})
\doiurl{10.1007/BF00251230}
\end{barticle}
\endbibitem

\bibitem[\protect\citeauthoryear{Baldo}{1990}]{Baldo-1990a}
\begin{barticle}
\bauthor{\bsnm{Baldo}, \binits{S.}}:
\batitle{Minimal interface criterion for phase transitions in mixtures of
  {C}ahn-{H}illiard fluids}.
\bjtitle{Annales de l'{I}nstitut {H}enri {P}oincar{\'e} Analyse non lin\'eaire}
\bvolume{7}(\bissue{2}),
\bfpage{67}--\blpage{90}
(\byear{1990})
\doiurl{AIHPC_1990__7_2_67_0}
\end{barticle}
\endbibitem

\bibitem[\protect\citeauthoryear{Ciarlet}{2013}]{Ciarlet-2013a}
\begin{bbook}
\bauthor{\bsnm{Ciarlet}, \binits{P.G.}}:
\bbtitle{Linear and Nonlinear Functional Analysis with Applications}.
\bpublisher{SIAM},
\blocation{Philadelphia}
(\byear{2013}).
\doiurl{10.1137/1.9781611972597.fm}
\end{bbook}
\endbibitem

\bibitem[\protect\citeauthoryear{De~los {R}eyes}{2015}]{DelosReyes2015}
\begin{bbook}
\bauthor{\bsnm{{R}eyes}, \binits{J.C.}}:
\bbtitle{Numerical {PDE}-Constrained Optimization}.
\bsertitle{SpringerBriefs in Optimization}.
\bpublisher{Springer},
\blocation{New York}
(\byear{2015}).
\doiurl{10.1007/978-3-319-13395-9}
\end{bbook}
\endbibitem

\bibitem[\protect\citeauthoryear{Ham et~al.}{2023}]{FiredrakeUserManual}
\begin{botherref}
\oauthor{\bsnm{Ham}, \binits{D.A.}},
\oauthor{\bsnm{Kelly}, \binits{P.H.J.}},
\oauthor{\bsnm{Mitchell}, \binits{L.}},
\oauthor{\bsnm{Cotter}, \binits{C.J.}},
\oauthor{\bsnm{Kirby}, \binits{R.C.}},
\oauthor{\bsnm{Sagiyama}, \binits{K.}},
\oauthor{\bsnm{Bouziani}, \binits{N.}},
\oauthor{\bsnm{Vorderwuelbecke}, \binits{S.}},
\oauthor{\bsnm{Gregory}, \binits{T.J.}},
\oauthor{\bsnm{Betteridge}, \binits{J.}},
\oauthor{\bsnm{Shapero}, \binits{D.R.}},
\oauthor{\bsnm{Nixon-Hill}, \binits{R.W.}},
\oauthor{\bsnm{Ward}, \binits{C.J.}},
\oauthor{\bsnm{Farrell}, \binits{P.E.}},
\oauthor{\bsnm{Brubeck}, \binits{P.D.}},
\oauthor{\bsnm{Marsden}, \binits{I.}},
\oauthor{\bsnm{Gibson}, \binits{T.H.}},
\oauthor{\bsnm{Homolya}, \binits{M.}},
\oauthor{\bsnm{Sun}, \binits{T.}},
\oauthor{\bsnm{McRae}, \binits{A.T.T.}},
\oauthor{\bsnm{Luporini}, \binits{F.}},
\oauthor{\bsnm{Gregory}, \binits{A.}},
\oauthor{\bsnm{Lange}, \binits{M.}},
\oauthor{\bsnm{Funke}, \binits{S.W.}},
\oauthor{\bsnm{Rathgeber}, \binits{F.}},
\oauthor{\bsnm{Bercea}, \binits{G.-T.}},
\oauthor{\bsnm{Markall}, \binits{G.R.}}:
Firedrake user manual.
Technical report,
Imperial College London
(2023).
\doiurl{10.25561/104839}
\end{botherref}
\endbibitem

\bibitem[\protect\citeauthoryear{Balay et~al.}{1997}]{petsc-efficient}
\begin{bchapter}
\bauthor{\bsnm{Balay}, \binits{S.}},
\bauthor{\bsnm{Gropp}, \binits{W.D.}},
\bauthor{\bsnm{McInnes}, \binits{L.C.}},
\bauthor{\bsnm{Smith}, \binits{B.F.}}:
\bctitle{Efficient management of parallelism in object oriented numerical
  software libraries}.
In: \beditor{\bsnm{Arge}, \binits{E.}},
\beditor{\bsnm{Bruaset}, \binits{A.M.}},
\beditor{\bsnm{Langtangen}, \binits{H.P.}} (eds.)
\bbtitle{Modern Software Tools in Scientific Computing},
pp. \bfpage{163}--\blpage{202}.
\bpublisher{Birkh{\"{a}}user Press},
\blocation{Boston}
(\byear{1997})
\end{bchapter}
\endbibitem

\bibitem[\protect\citeauthoryear{Balay et~al.}{2024a}]{petsc-user-ref}
\begin{botherref}
\oauthor{\bsnm{Balay}, \binits{S.}},
\oauthor{\bsnm{Abhyankar}, \binits{S.}},
\oauthor{\bsnm{Adams}, \binits{M.F.}},
\oauthor{\bsnm{Benson}, \binits{S.}},
\oauthor{\bsnm{Brown}, \binits{J.}},
\oauthor{\bsnm{Brune}, \binits{P.}},
\oauthor{\bsnm{Buschelman}, \binits{K.}},
\oauthor{\bsnm{Constantinescu}, \binits{E.}},
\oauthor{\bsnm{Dalcin}, \binits{L.}},
\oauthor{\bsnm{Dener}, \binits{A.}},
\oauthor{\bsnm{Eijkhout}, \binits{V.}},
\oauthor{\bsnm{Faibussowitsch}, \binits{J.}},
\oauthor{\bsnm{Gropp}, \binits{W.D.}},
\oauthor{\bsnm{Hapla}, \binits{V.}},
\oauthor{\bsnm{Isaac}, \binits{T.}},
\oauthor{\bsnm{Jolivet}, \binits{P.}},
\oauthor{\bsnm{Karpeev}, \binits{D.}},
\oauthor{\bsnm{Kaushik}, \binits{D.}},
\oauthor{\bsnm{Knepley}, \binits{M.G.}},
\oauthor{\bsnm{Kong}, \binits{F.}},
\oauthor{\bsnm{Kruger}, \binits{S.}},
\oauthor{\bsnm{May}, \binits{D.A.}},
\oauthor{\bsnm{McInnes}, \binits{L.C.}},
\oauthor{\bsnm{Mills}, \binits{R.T.}},
\oauthor{\bsnm{Mitchell}, \binits{L.}},
\oauthor{\bsnm{Munson}, \binits{T.}},
\oauthor{\bsnm{Roman}, \binits{J.E.}},
\oauthor{\bsnm{Rupp}, \binits{K.}},
\oauthor{\bsnm{Sanan}, \binits{P.}},
\oauthor{\bsnm{Sarich}, \binits{J.}},
\oauthor{\bsnm{Smith}, \binits{B.F.}},
\oauthor{\bsnm{Zampini}, \binits{S.}},
\oauthor{\bsnm{Zhang}, \binits{H.}},
\oauthor{\bsnm{Zhang}, \binits{H.}},
\oauthor{\bsnm{Zhang}, \binits{J.}}:
{PETSc/TAO} users manual.
Technical Report ANL-21/39 - Revision 3.21,
Argonne National Laboratory
(2024).
\doiurl{10.2172/2205494}
\end{botherref}
\endbibitem

\bibitem[\protect\citeauthoryear{Balay et~al.}{2024b}]{petsc-web-page}
\begin{botherref}
\oauthor{\bsnm{Balay}, \binits{S.}},
\oauthor{\bsnm{Abhyankar}, \binits{S.}},
\oauthor{\bsnm{Adams}, \binits{M.F.}},
\oauthor{\bsnm{Benson}, \binits{S.}},
\oauthor{\bsnm{Brown}, \binits{J.}},
\oauthor{\bsnm{Brune}, \binits{P.}},
\oauthor{\bsnm{Buschelman}, \binits{K.}},
\oauthor{\bsnm{Constantinescu}, \binits{E.M.}},
\oauthor{\bsnm{Dalcin}, \binits{L.}},
\oauthor{\bsnm{Dener}, \binits{A.}},
\oauthor{\bsnm{Eijkhout}, \binits{V.}},
\oauthor{\bsnm{Faibussowitsch}, \binits{J.}},
\oauthor{\bsnm{Gropp}, \binits{W.D.}},
\oauthor{\bsnm{Hapla}, \binits{V.}},
\oauthor{\bsnm{Isaac}, \binits{T.}},
\oauthor{\bsnm{Jolivet}, \binits{P.}},
\oauthor{\bsnm{Karpeev}, \binits{D.}},
\oauthor{\bsnm{Kaushik}, \binits{D.}},
\oauthor{\bsnm{Knepley}, \binits{M.G.}},
\oauthor{\bsnm{Kong}, \binits{F.}},
\oauthor{\bsnm{Kruger}, \binits{S.}},
\oauthor{\bsnm{May}, \binits{D.A.}},
\oauthor{\bsnm{McInnes}, \binits{L.C.}},
\oauthor{\bsnm{Mills}, \binits{R.T.}},
\oauthor{\bsnm{Mitchell}, \binits{L.}},
\oauthor{\bsnm{Munson}, \binits{T.}},
\oauthor{\bsnm{Roman}, \binits{J.E.}},
\oauthor{\bsnm{Rupp}, \binits{K.}},
\oauthor{\bsnm{Sanan}, \binits{P.}},
\oauthor{\bsnm{Sarich}, \binits{J.}},
\oauthor{\bsnm{Smith}, \binits{B.F.}},
\oauthor{\bsnm{Zampini}, \binits{S.}},
\oauthor{\bsnm{Zhang}, \binits{H.}},
\oauthor{\bsnm{Zhang}, \binits{H.}},
\oauthor{\bsnm{Zhang}, \binits{J.}}:
{PETS}c {W}eb page.
\url{https://petsc.org/}
(2024)
\end{botherref}
\endbibitem

\bibitem[\protect\citeauthoryear{Akerson et~al.}{2022}]{Akerson_2022}
\begin{botherref}
\oauthor{\bsnm{Akerson}, \binits{A.}},
\oauthor{\bsnm{Bourdin}, \binits{B.}},
\oauthor{\bsnm{Bhattacharya}, \binits{K.}}:
Optimal design of responsive structures.
Structural and Multidisciplinary Optimization
\textbf{65}(4)
(2022)
\doiurl{10.1007/s00158-022-03200-5}
\end{botherref}
\endbibitem

\end{thebibliography}

\end{document}